\documentclass[a4paper,11pt]{article}

\usepackage{mathrsfs}
\usepackage{amsmath}
\usepackage{bm}
\usepackage{amssymb}
\usepackage{makeidx}
\makeindex
\usepackage[dvipsnames]{xcolor}
\usepackage[colorlinks=true, linkcolor=blue, citecolor=blue, urlcolor=blue]{hyperref}
\usepackage{titlesec}
\usepackage{amsthm}
\usepackage{setspace}
\usepackage{pifont}
\usepackage{esint}
\usepackage{graphicx}
\usepackage{float}
\usepackage{enumerate}
\numberwithin{equation}{section}
\DeclareMathOperator{\Ent}{Ent}

\usepackage[backend=biber,style=alphabetic,sorting=nyt]{biblatex}
\DeclareFieldFormat[article]{title}{#1}
\renewbibmacro{in:}{}
\DeclareFieldFormat{pages}{#1}
\DeclareFieldFormat[article]
{journaltitle}{\mkbibemph{#1}\addcomma}
\renewbibmacro*{volume+number+eid}{%
	\printfield{volume}%
	\setunit{\addspace}%
	\printfield{eid}%
}
\renewbibmacro*{eprint}{%
	\iffieldundef{eprint}
	{}
	{%
		\setunit{\addcomma\space}% <- 这里决定前面的标点是逗号
		\printtext{%
			\printfield[eprint:arxiv]{eprint}}%
	}%
}

\titleformat{\section}[block]{\bfseries\Large}{\thesection}{1em}{}

\newtheoremstyle{mystyle}
{}{}{\upshape}{}{\bfseries}{.}{.5em}{}
\theoremstyle{mystyle}

\newtheorem{theorem}{Theorem}[section]
\newtheorem{definition}{Definition}[section]
\newtheorem{rmk}{Remark}[section]
\newtheorem{proposition}{Proposition}[section]

\newtheorem{lemma}{Lemma}[section]

\theoremstyle{plain}

\newcommand{\I}{{\cal I}}

\newcommand{\R}{\mathbb{R}}

\newcommand{\N}{\mathbb{N}}

\newcommand{\cL}{\mathcal{L}}
\newcommand{\cF}{\mathcal{F}}

\newcommand{\cJ}{\mathcal{J}}

\begin{document} 
	
	\title{\large
         The Conformal Fractional--Logarithmic Laplacian on the Sphere: \\[2mm] Yamabe Problems and Sharp Inequalities
         }
	
	\author{\small Huyuan Chen\qquad Rui Chen\qquad Daniel Hauer }
	\date{}
	\maketitle
	\thispagestyle{empty}
	\pagenumbering{arabic}
	
\noindent\textbf{Abstract:}
In this paper, we introduce the conformal fractional--logarithmic Laplacian on the unit sphere, defined as the derivative of the conformal fractional Laplacian with respect to the order parameter \(s\in(0,1)\). We investigate its fundamental analytic and spectral properties, including its relation to the conformal logarithmic Laplacian, its spectral representation, and the explicit form of its eigenvalues and eigenfunctions. We further establish its conformal covariance law and derive the associated Yamabe-type equation, proving its equivalence to the corresponding conformal equation in \(\mathbb R^N\) through stereographic projection. Finally, we apply this framework to sharp Sobolev-type inequalities, recovering the sharp logarithmic Sobolev inequality, revealing the failure of a naive fractional--logarithmic analogue, and establishing new sharp fractional--logarithmic inequalities.

	%\bigskip

	% ========== Table of contents ==========
	
	\thispagestyle{empty}
		\bigskip
	
	\noindent \textbf{Keywords:} Conformal Fractional-Logarithmic Laplacian,   Yamabe problems, Sharp Sobolev Inequality
	
\medskip

	%\noindent {\small\bf MSC Subject Classifications: }  
\tableofcontents
    
	% ========== Main text ==========
	\setcounter{page}{1}

\section{Introduction and Main Results}

\subsection{Conformally Covariant Operators}
In recent years, conformally covariant operators on Riemannian manifolds, together with their
associated curvature quantities and curvature prescription problems, have attracted significant
attention in conformal geometry and geometric analysis.
Classical examples include the conformal Laplacian, which
underlies the Yamabe problem; see, e.g., \cite{leeParker1987,aubin1976,schoen1984,hang2016q,gursky2015strong,chen1997priori}.

To be more precise, let $(M,g)$ be a smooth Riemannian manifold of dimension $N\in\mathbb N$, and let
$\mathscr P_g:C^\infty(M)\to C^\infty(M)$ be a pseudo-differential operator.
We say that $\mathscr P_g$ is \emph{conformally covariant} if, under the conformal change of metric
$\tilde g=\eta\,g$ with $\eta\in C^\infty(M)$ positive, it satisfies a transformation law of the form
\begin{equation}\label{eq:conf-law-general}
\mathscr P_{\eta g}(u)=\eta^{-b}\,\mathscr P_g\bigl(\eta^{a}u\bigr),
\quad u\in C^\infty(M),
\end{equation}
for some constants $a,b\in\mathbb R$.
Associated with $\mathscr P_g$ one defines the curvature quantity
\[
Q^{\mathscr P_g}:=\mathscr P_g(1),
\]
which satisfies a $Q$-curvature type equation obtained by inserting $u\equiv 1$ in
\eqref{eq:conf-law-general}. Writing $\eta=w^{1/a}$, we may express this as
\begin{equation}\label{eq:Q-curv-type}
\mathscr P_g(w)=Q^{\mathscr P_{\tilde g}}\,w^{\frac{b}{a}},
\qquad \tilde g=\eta g=w^{1/a}g,\quad w>0.
\end{equation}

\medskip

Let $\Delta_g$ denote the Laplace--Beltrami operator and $R_g$ the scalar curvature of $(M,g)$.
A fundamental example is the conformal Laplacian
\[
\mathscr P_g^{1}:=-\Delta_g+\frac{N-2}{4(N-1)}\,R_g,
\]
for which
\[
Q^{\mathscr P_g^1}:=\mathscr P_g^{1}(1)=\frac{N-2}{4(N-1)}\,R_g.
\]
In the notation of \eqref{eq:conf-law-general}, the operator $\mathscr P_g^{1}$ satisfies
\eqref{eq:conf-law-general} with
\[
a=\frac{N-2}{4},
\qquad
b=\frac{N+2}{4}.
\]
In this case, the $Q$-curvature type equation \eqref{eq:Q-curv-type} takes the form of the classical
\emph{Yamabe equation}. More precisely, if $\tilde g=u^{\frac{4}{N-2}}\,g$ with $u>0$, then
\[\mathscr P_g^{1}(u)
= \frac{N-2}{4(N-1)}\,R_{\tilde g}\,u^{\frac{N+2}{N-2}}
\qquad\text{on }M,\]
or equivalently,
\[-\Delta_g u+\frac{N-2}{4(N-1)}\,R_g\,u
= \frac{N-2}{4(N-1)}\,R_{\tilde g}\,u^{\frac{N+2}{N-2}}
\qquad\text{on }M.\]
In particular, prescribing constant scalar curvature $R_{\tilde g}\equiv \kappa\in\mathbb R$ yields the
constant-curvature Yamabe problem
\[-\Delta_g u+\frac{N-2}{4(N-1)}\,R_g\,u
= \frac{N-2}{4(N-1)}\,\kappa\,u^{\frac{N+2}{N-2}}
\qquad\text{on }M.\]

Beyond this second-order model, one has the higher-order GJMS operators
constructed by Graham--Jenne--Mason--Sparling \cite{grahamJenneMasonSparling1992} and their extensions,
as well as the \emph{fractional conformal Laplacians} $\{\mathscr P_g^{s}\}_{s\in(0,N/2)}$ arising from
scattering theory on conformally compact Einstein (or more generally asymptotically hyperbolic)
fillings \cite{grahamZworski2003,chang2011fractional,gonzalez2016recent}.
These families satisfy \eqref{eq:conf-law-general} with
\[
a=\frac{N-2s}{4},
\qquad
b=\frac{N+2s}{4},
\]
and the associated curvature $Q^{\mathscr P_g^s}:=\mathscr P_g^{s}(1)$ is referred to as the fractional $Q$-curvature.

The case of the sphere with its round metric, denoted by $(\mathbb S^N,g)$, is particularly
important: stereographic projection identifies conformal operators on $\mathbb S^N$ with Euclidean
models on $\mathbb R^N$. In particular, $\mathscr P_g^{s}$ intertwines with the Euclidean fractional Laplacian
$(-\Delta)^s$ on $\mathbb R^N$; see \cite{chang2011fractional,gonzalez2016recent,GMQ13,KSMW18,KMW21} and more general topics see \cite{LX19,JLX14,wei2009prescribing,schoen1996prescribed,jin2014fractional,chen2016existence,case2016fractional,ao2019higher}.

\medskip
Throughout this paper, $(\mathbb S^N,g)$ with $N\ge 1$ denotes the unit sphere
$\mathbb S^N\subset\mathbb R^{N+1}$ endowed with the standard round metric $g=g_{\mathbb S^N}$,
i.e.\ the pullback of the Euclidean metric on $\mathbb R^{N+1}$ via the inclusion map.
For $s\in(0,1)$ and $N>2s$, the conformal fractional Laplacian $\mathscr P_g^{s}$ on $(\mathbb S^N,g)$
admits the singular-integral representation
\cite[Proposition 4.3]{gonzalez2016recent}
\begin{equation}\label{eq:frac-sphere}
\mathscr P_g^{s}u(z)
= c_{N,s}\,\mathrm{p.v.}\!\int_{\mathbb S^N}
\frac{u(z)-u(\zeta)}{|z-\zeta|^{N+2s}}\,dV_g(\zeta)
+ A_{N,s}\,u(z),
\qquad u\in C^\infty(\mathbb S^N),
\end{equation}
where $\mathrm{p.v.}$ denotes the Cauchy principal value and
\begin{equation}\label{eq:constants}
A_{N,s}
:=\frac{\Gamma\!\left(\tfrac N2+s\right)}{\Gamma\!\left(\tfrac N2-s\right)},
\qquad
c_{N,s}
:=4^{s}\pi^{-N/2}s(1-s)\,
\frac{\Gamma\!\left(\tfrac N2+s\right)}{\Gamma(2-s)}.
\end{equation}
Here $\Gamma$ denotes the Gamma function.

Naturally, one is led to investigate the behaviour of $\mathscr P_g^s$ at the endpoints of the
parameter range and, in particular, to understand the logarithmic regime arising as $s\to0^+$.
In this direction, the \emph{conformal logarithmic Laplacian} on the sphere can be introduced as
the derivative of $\mathscr P_g^s$ at $s=0$. More precisely, \cite[Theorem~1.1]{fernandez2025conformal}
provides the following pointwise representation:
\begin{equation}\label{eq:log-def}
\mathscr P_g^{\ln} u(z)
:=\left.\frac{d}{ds}\mathscr P_g^s u(z)\right|_{s=0}
= c_N \!\int_{\mathbb S^N}
\frac{u(z)-u(\zeta)}{|z-\zeta|^N}\,dV_g(\zeta)
+ A_N\,u(z),
\quad u\in C^\infty(\mathbb S^N),
\end{equation}
where
\[
c_N := \pi^{-N/2}\Gamma\!\left(\tfrac N2\right),
\qquad
A_N := 2\,\psi\!\left(\tfrac N2\right),
\]
and $\psi(x)=\Gamma'(x)/\Gamma(x)$ denotes the digamma function.
The conformal logarithmic Yamabe-type problem associated with $\mathscr P_g^{\ln}$ on $\mathbb S^N$
has been recently investigated in \cite{fernandez2025conformal}; see also the references therein for
further background on fractional conformal Laplacians and related curvature problems. Here we'd like to mention that the logarithmic Laplacian in $\R^N$ was first introduced by \cite{CW19} through an asymptotic expansion of the fractional Laplacian. Since then, it has been systematically investigated in a growing body of literature, including \cite{GDB25,CDKN23,JSW20,CV24,CJS24} and the references therein.

\subsection{Conformal Fractional--Logarithmic Laplacian}
 
Our interest in this article is to investigate the \emph{conformal fractional--logarithmic Laplacian}
on the sphere, defined as the derivative of $\mathscr P_g^t$ at an intermediate order
$t=s\in(0,1)$. In this sense, $\mathscr P_g^{s+\ln}$ may be viewed as an operator lying ``between''
the fractional conformal Laplacian $\mathscr P_g^{s}$ and the conformal logarithmic Laplacian
$\mathscr P_g^{\ln}$: it retains the conformal weight dictated by the order $s$, while exhibiting a
logarithmic correction originating from differentiation with respect to the order. Namely, we set
\begin{equation}\label{eq:def-frac-log-sphere}
\mathscr P_g^{s+\ln} u(z)
:= \left.\frac{d}{dt}\mathscr P_g^t u(z)\right|_{t=s},
\qquad u\in C^\infty(\mathbb S^N).
\end{equation}

Our first result establishes a singular-integral representation for $\mathscr P_g^{s+\ln}$, which
makes the nonlocal logarithmic structure explicit.

\begin{theorem}
\label{prop:frac-log-kernel}
\text{(i).} Let $s\in(0,1)$, integer $N>2s$ and  $u\in C^\beta(\mathbb{S}^N)$ with
$\beta>2s$. Then for every $z\in\mathbb{S}^N$, 
$\mathscr{P}^{s+\ln}_g u(z)$ given by~\eqref{eq:def-frac-log-sphere} is
well-defined and admits the representation
\begin{equation}\label{eq:frac-log-kernel}
  \mathscr{P}^{s+\ln}_g u(z)
  = c_{N,s}\,\mathrm{p.v.}\!\int_{\mathbb{S}^N}
      \frac{u(z)-u(\zeta)}{|z-\zeta|^{N+2s}}
      \bigl(-2\ln|z-\zeta|+b_{N,s}\bigr)\,dV_g(\zeta)
    + A'_{N,s}\,u(z),
\end{equation}
where
\begin{equation}\label{eq:frac-log-constants}
    b_{N,s} := \frac{c'_{N,s}}{c_{N,s}}
    = \ln 4
      + \psi\!\Big(\tfrac N2+s\Big)
      + \psi(2-s)
      + \frac{1}{s}
      - \frac{1}{1-s}
\end{equation}
and
\begin{equation}\label{eq:A-Ns-prime}
    A'_{N,s}
    := \frac{d}{dt}A_{N,t}\big|_{t=s}
    = A_{N,s}\Bigl(
          \psi\!\Bigl(\tfrac N2+s\Bigr)
        + \psi\!\Bigl(\tfrac N2-s\Bigr)
      \Bigr).
\end{equation}
Here $\psi$ denotes the Digamma function and $A_{N,s}$ is given in \eqref{eq:constants}.\smallskip

\text{(ii).} Assume that
$u\in C^\beta(\mathbb{S}^N)$ with $\beta>2s$.  For every $1\le p\le\infty$ we have
  \(
     \mathscr{P}^{s+\ln}_g u \in L^p(\mathbb{S}^N),
  \)
  and
  \[
    \frac{\mathscr{P}^t_g u - \mathscr{P}^s_g u}{t-s}
    \longrightarrow \mathscr{P}^{s+\ln}_g u
    \quad\text{in }L^p(\mathbb{S}^N)\,\quad\text{as } t\to s.
  \]
  
\text{(iii).} Let $u\in C^{\beta}(\mathbb{S}^N)$ with $\beta>0$. Then
\[
  \mathscr{P}^{s+\ln}_g u \longrightarrow \mathscr{P}^{\ln}_g u
  \quad\text{uniformly on }\mathbb{S}^N\ \text{as }s\to0^+
\]
and for every $1\le p\le+\infty$, 
\[
  \mathscr{P}^{s+\ln}_g u \longrightarrow \mathscr{P}^{\ln}_g u
  \quad\text{in }L^p(\mathbb{S}^N)\ \text{as }s\to0^+.
\]
\end{theorem}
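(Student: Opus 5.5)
The plan is to differentiate the singular-integral representation \eqref{eq:frac-sphere} in $t$ under the integral sign, after first rewriting it so that the integrand is absolutely integrable uniformly for $t$ near $s$. For $u\in C^\beta(\mathbb S^N)$ with $\beta>2s$, I will symmetrize the principal value: for fixed $z$, the reflection $\zeta\mapsto 2\langle z,\zeta\rangle z-\zeta$ (rotation by $\pi$ about the axis through $z$) is an isometry of $\mathbb S^N$ fixing $z$ and preserving $|z-\zeta|$. Hence
\[
\mathrm{p.v.}\!\int_{\mathbb S^N}\frac{u(z)-u(\zeta)}{|z-\zeta|^{N+2t}}\,dV_g(\zeta)=\frac12\int_{\mathbb S^N}\frac{2u(z)-u(\zeta)-u(\tilde\zeta)}{|z-\zeta|^{N+2t}}\,dV_g(\zeta),
\]
where $\tilde\zeta$ is the reflected point. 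In local normal coordinates the numerator is $O(|z-\zeta|^{\beta})$ when $\beta\le 2$ (the first-order terms cancel; for $\beta\in(1,2)$ this uses $C^{1,\beta-1}$). When $\beta>2$ I would simply replace $\beta$ by $\min\{\beta,2\}$. Since $\beta>2s$, I can choose $\delta>0$ with $2(s+\delta)<\beta$. Then for $|t-s|\le\delta$ the integrand and its $t$-derivative are dominated by
\[
C\|u\|_{C^\beta}\,|z-\zeta|^{\beta-N-2s-2\delta}\bigl(1+|\ln|z-\zeta||\bigr),
\]
which is integrable on $\mathbb S^N$ uniformly in $z$. Dominated convergence then justifies differentiation under the integral.

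Next I apply the product rule to $c_{N,t}\,I_t(z)+A_{N,t}u(z)$. Since $\partial_t|z-\zeta|^{-N-2t}=-2\ln|z-\zeta|\,|z-\zeta|^{-N-2t}$, this gives the kernel $c_{N,s}(-2\ln|z-\zeta|+c'_{N,s}/c_{N,s})$. Differentiating the logarithm of \eqref{eq:constants} yields
\[
\frac{c'_{N,s}}{c_{N,s}}=\ln4+\frac1s-\frac1{1-s}+\psi\!\left(\tfrac N2+s\right)+\psi(2-s),
\]
which is $b_{N,s}$. Likewise $\partial_t\ln A_{N,t}=\psi(\frac N2+t)+\psi(\frac N2-t)$, which gives \eqref{eq:A-Ns-prime}. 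Undoing the symmetrization, which is legitimate because the new kernel is also invariant under the reflection, gives \eqref{eq:frac-log-kernel} and proves (i).

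For (ii), the domination above is uniform in $z$. Writing the difference quotient as $\partial_t$ at an intermediate point $\tau$ (mean value theorem), and using continuity of the $t$-derivative of the integrand together with dominated convergence, I get that $\mathscr P^{s+\ln}_g u$ is bounded and that the difference quotients converge uniformly on $\mathbb S^N$. This implies $L^p$ convergence for every $1\le p\le\infty$, since $\mathbb S^N$ has finite volume.

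For (iii), I expect the main obstacle to be that $c_{N,s}\sim s$ vanishes while $b_{N,s}\sim 1/s$ blows up, so the naive limit of the kernel is not transparent. I split $c_{N,s}b_{N,s}=c'_{N,s}$ and compute $c'_{N,s}\to \pi^{-N/2}\Gamma(\frac N2)\cdot\frac{1}{\Gamma(2)}=c_N$, because $\frac{d}{ds}[s(1-s)]\big|_{s=0}=1$. The remaining term is $-2c_{N,s}\ln|z-\zeta|$, which tends to $0$. For $u\in C^\beta$ with $s<\beta/4$, the symmetrized integrals are dominated uniformly in $z$ by $|z-\zeta|^{\beta/2-N}(1+|\ln|z-\zeta||)$. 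Hence
\[
c'_{N,s}\int\frac{u(z)-u(\zeta)}{|z-\zeta|^{N+2s}}\,dV_g(\zeta)\to c_N\int\frac{u(z)-u(\zeta)}{|z-\zeta|^{N}}\,dV_g(\zeta)
\]
uniformly in $z$, while $c_{N,s}\int\frac{u(z)-u(\zeta)}{|z-\zeta|^{N+2s}}\ln|z-\zeta|\,dV_g(\zeta)\to0$ uniformly. Finally $A'_{N,s}\to A_{N,0}\cdot2\psi(\frac N2)=2\psi(\frac N2)=A_N$, since $A_{N,0}=1$. Comparing with \eqref{eq:log-def} gives uniform convergence, and hence $L^p$ convergence by finiteness of the volume.
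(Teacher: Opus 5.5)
Your argument is correct and has the same skeleton as the paper's. You differentiate $c_{N,t}\mathcal I_t(z)+A_{N,t}u(z)$ under the integral sign using a dominating function of the form $|z-\zeta|^{\gamma-N}(1+|\ln|z-\zeta||)$. You obtain $b_{N,s}$ and $A'_{N,s}$ by logarithmic differentiation. In (iii) you use $c_{N,s}b_{N,s}=c'_{N,s}\to c_N$, $c_{N,s}\to0$ and $A'_{N,s}\to 2\psi(\tfrac N2)$.

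\textbf{Symmetrization.} The genuine difference is your symmetrization by the geodesic symmetry $\zeta\mapsto 2\langle z,\zeta\rangle z-\zeta$. For odd $N$ this map is orientation-reversing rather than a rotation, but you only use that it is an isometry fixing $z$.
\begin{itemize}
\item The paper dominates the unsymmetrized integrand via $|u(z)-u(\zeta)|\le C|z-\zeta|^\beta$. That bound is only available for $\beta\le 1$; for $\beta>1$ it forces $u$ to be constant.
\item So the paper's argument for (i)--(ii) really covers only $s<\tfrac12$. For $s\ge\tfrac12$ the integral is not absolutely convergent in general, and the cancellation of the first-order term is indispensable.
\item Your second-difference bound $|2u(z)-u(\zeta)-u(\tilde\zeta)|\le C|z-\zeta|^{\min\{\beta,2\}}$ supplies exactly that cancellation. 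Together with the reflection invariance of both kernels, it makes the symmetrized integrals coincide with the principal values, so (i)--(ii) hold for all $s\in(0,1)$.
\item In (iii) either route works, since $s\to0^+$; the paper's route needs only $\beta$ replaced by $\min\{\beta,1\}$.
\end{itemize}

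\textbf{Uniformity in (ii).} The paper uses a second-order Taylor bound in $t$ for $K_t(z,\zeta):=|z-\zeta|^{-N-2t}$, namely $\bigl|\frac{K_t-K_s}{t-s}-\partial_tK_s\bigr|\le C|t-s|(\ln|z-\zeta|)^2|z-\zeta|^{-N-2(s+\varepsilon)}$. This gives uniform convergence on $\mathbb S^N$ at once, with rate $O(|t-s|)$. Your mean value theorem plus dominated convergence, as stated, only gives convergence for each fixed $z$. To obtain uniformity, add that the kernel error depends only on $|z-\zeta|$ and that your numerator bound is uniform in $z$. Then the supremum over $z$ is controlled by a single $z$-independent integral tending to $0$. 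Alternatively, simply use the Taylor bound.
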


Next, we introduce a Dini-type continuity condition adapted to the operator $\mathscr{P}^{s+\ln}_g$.

\begin{definition}
Let $u:\mathbb{S}^N\to\mathbb{R}$ be a measurable function and
$z\in\mathbb{S}^N$. The modulus of continuity of $u$ at $z$ is defined
by
\[
  \omega_{u,z}(r)
  := \sup_{\substack{\zeta\in\mathbb{S}^N\\ d_g(z,\zeta)\le r}}
     |u(z)-u(\zeta)|,
  \qquad r>0,
\]
where $d_g$ denotes the geodesic distance on $(\mathbb{S}^N,g)$.

For $s\in(0,1)$ we say that $u$ is fractional--logarithmic Dini
continuous of order $s$ at $z$ if
\[ \int_0^1
    \frac{\omega_{u,z}(r)}{r^{1+2s}}\,
    \bigl(1+|\ln r|\bigr)\,dr <\infty.\]
We say that $u$ is uniformly fractional--logarithmic Dini
continuous of order $s$ on $\mathbb{S}^N$ if
\[
  \omega_u(r):=\sup_{z\in\mathbb{S}^N}\omega_{u,z}(r)
\]
satisfies
\[\int_0^1
    \frac{\omega_u(r)}{r^{1+2s}}\,
    \bigl(1+|\ln r|\bigr)\,dr <\infty.\]
\end{definition}

\begin{proposition}\label{prop:Dini-frac-log}
Let $s\in(0,1)$ and $u\in L^1(\mathbb{S}^N)$.
\begin{enumerate}
  \item[(i)] If $u$ is fractional--logarithmic Dini continuous of order
  $s$ at $z\in\mathbb{S}^N$, then $\mathscr{P}^{s+\ln}_g u(z)$ is
  well defined.

  \item[(ii)] If $u$ is uniformly fractional--logarithmic Dini
  continuous of order $s$, then
  $\mathscr{P}^{s+\ln}_g u$ is continuous.
\end{enumerate}
\end{proposition}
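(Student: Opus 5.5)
Throughout, "well defined" will mean that the right-hand side of \eqref{eq:frac-log-kernel} exists, in fact as an absolutely convergent Lebesgue integral in the singular region. We work with this representation from Theorem~\ref{prop:frac-log-kernel}(i) and read it as the definition of $\mathscr P^{s+\ln}_g u(z)$ for the class of $u$ in the statement.

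\textbf{Plan for (i).} Fix $z$ and set $K(z,\zeta):=|z-\zeta|^{-N-2s}\bigl(-2\ln|z-\zeta|+b_{N,s}\bigr)$. Split $\mathbb S^N$ into the near region $\{d_g(z,\zeta)\le 1\}$ and its complement.
\begin{itemize}
\item \emph{Far region.} Here $|z-\zeta|$ is bounded below by a positive constant, so $|K|$ is bounded. The integral is then dominated by $C(|u(z)|+\|u\|_{L^1})$, which is finite since $u(z)$ is finite (the Dini condition forces $\omega_{u,z}$ finite near $0$).
\item \emph{Near region.} Use that $|z-\zeta|$ and $r=d_g(z,\zeta)$ are comparable: $\tfrac{2}{\pi}r\le|z-\zeta|\le r$. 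This gives $|K|\le C r^{-N-2s}(1+|\ln r|)$. Pass to geodesic polar coordinates around $z$, where $dV_g=\sin^{N-1}r\,dr\,d\sigma\le r^{N-1}dr\,d\sigma$, and bound $|u(z)-u(\zeta)|\le\omega_{u,z}(r)$.
\end{itemize}
The near-region integral is then bounded by
\[
C\int_0^1 \frac{\omega_{u,z}(r)}{r^{1+2s}}\bigl(1+|\ln r|\bigr)\,dr<\infty .
\]
So the integral converges absolutely, the principal value equals the Lebesgue integral, and the zeroth-order term $A'_{N,s}u(z)$ is finite. This gives (i).

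\textbf{Plan for (ii).} The difference $\mathscr P^{s+\ln}_g u(z)-\mathscr P^{s+\ln}_g u(z_0)$ has two pieces. The piece $A'_{N,s}(u(z)-u(z_0))$ tends to zero because uniform Dini continuity forces $\omega_u(r)\to0$: the integrand $\omega_u(r)r^{-1-2s}$ is not integrable near $0$ unless $\omega_u(r)\to 0$, since $\omega_u$ is monotone. So $u$ is uniformly continuous, and in particular bounded. For the integral piece, fix $\delta\in(0,1)$ and split each integral into $d_g(z,\zeta)<\delta$ and $d_g(z,\zeta)\ge\delta$.
\begin{itemize}
\item \emph{Small part.} By the estimate from (i), now with $\omega_u$ in place of $\omega_{u,z}$, this part is bounded uniformly in $z$ by
\[
C\int_0^\delta \frac{\omega_u(r)}{r^{1+2s}}(1+|\ln r|)\,dr,
\]
which tends to $0$ as $\delta\to0$.
\item \emph{Large part.} Use rotational invariance: choose rotations $R_z$ with $R_z z_0=z$ and $R_z\to\mathrm{Id}$ as $z\to z_0$, and substitute $\zeta=R_z\eta$. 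The kernel depends only on $|z-\zeta|=|z_0-\eta|$, so the large part at $z$ becomes
\[
\int_{d_g(z_0,\eta)\ge\delta}\bigl(u(z)-u(R_z\eta)\bigr)K(z_0,\eta)\,dV_g(\eta).
\]
The kernel is bounded on this set, and $\sup_\eta|u(R_z\eta)-u(\eta)|\le\omega_u(|R_z-\mathrm{Id}|)\to0$. Dominated convergence, or simply the uniform bound, then gives convergence to the corresponding quantity at $z_0$.
\end{itemize}
Letting first $z\to z_0$ and then $\delta\to0$ yields continuity.

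\textbf{Main obstacle.} The only delicate points are justifying that the $\mathrm{p.v.}$ coincides with the absolutely convergent integral, and checking that uniform Dini continuity upgrades $u\in L^1$ to a continuous function. The rotation trick avoids any need for kernel-difference estimates.
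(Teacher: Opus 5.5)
\textbf{Verdict: your proof is correct.} Part (i) is the paper's argument verbatim in structure: the near/far split at geodesic distance $1$, comparability of chordal and geodesic distance, polar coordinates, and reduction to the Dini integral. Like the paper, you read \eqref{eq:frac-log-kernel} as the definition of $\mathscr P^{s+\ln}_g u$ on this class.

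\textbf{Where (ii) differs.} You share the paper's near/far decomposition, with the Dini tail $\int_0^{\delta}\omega_u(r)r^{-1-2s}(1+|\ln r|)\,dr$ controlling the singular part. The far part is handled differently.
\begin{itemize}
\item The paper fixes the region $\mathbb S^N\setminus B_\rho(z_0)$ for both $z$ and $z_0$. It observes that for $z$ near $z_0$ the kernel is continuous and bounded there. It concludes by pointwise convergence of the integrand for each fixed $\zeta$, plus dominated convergence with majorant $C_\rho(1+|u(\zeta)|)$.
\item You centre each cutoff at its own base point. You then use a rotation $R_z$ with $R_z z_0=z$ to move all $z$-dependence off the kernel and onto $u$, where it is controlled uniformly by $\omega_u$.
\end{itemize}

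\textbf{What each route buys.} Your route gives a quantitative statement. The far-part difference is bounded by $C_\delta\bigl[\omega_u(d_g(z,z_0))+\omega_u(C\,d_g(z,z_0))\bigr]$, so choosing $\delta$ in terms of $d_g(z,z_0)$ produces an explicit modulus of continuity for $\mathscr P^{s+\ln}_g u$ in terms of $\omega_u$. The price is that it relies on the homogeneity of the round sphere, namely rotation invariance of both $dV_g$ and the kernel. The paper's dominated-convergence step only needs continuity of the kernel off the diagonal, continuity of $u$ at $z_0$, and $u\in L^1$. It would therefore carry over to non-homogeneous settings.

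\textbf{Two further remarks.}
\begin{itemize}
\item You justify a step the paper uses without comment: monotonicity of $\omega_u$ plus finiteness of the Dini integral force $\omega_u(r)\to0$. Hence $u$ is uniformly continuous and bounded, which both arguments need.
\item Since $\omega_u$ is defined with $d_g$ rather than chordal distance, you should write $\sup_\eta|u(R_z\eta)-u(\eta)|\le\omega_u\bigl(\tfrac{\pi}{2}\|R_z-\mathrm{Id}\|\bigr)$ rather than $\omega_u(|R_z-\mathrm{Id}|)$. This is harmless because $\omega_u(r)\to0$.
\end{itemize}
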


	Next, we study the eigenvalues of the conformal fractional--logarithmic Laplacian. Let $-\Delta_g$ denote the nonnegative Laplace--Beltrami operator on $(\mathbb{S}^N,g)$.
Since $\mathbb{S}^N$ is compact, $-\Delta_g$ has purely discrete spectrum, and its eigenvalues are
\[
\lambda_k := k(k+N-1),\qquad k\in\mathbb{N}_0:=\mathbb{N}\cup\{0\}.
\]
For each $k\in\mathbb{N}_0$, define the eigenspace
\[
\mathcal{H}_k
:= \Big\{\Phi \in C^\infty(\mathbb{S}^N): -\Delta_g\Phi =\lambda_k\Phi \Big\}.
\]
Then $\mathcal{H}_k$ is precisely the space of spherical harmonics of degree $k$.
Denote by $d_k:=\dim\mathcal{H}_k$ its dimension. One has
\[
d_0=1,\quad d_1=N+1,\quad
d_k=\binom{N+k}{N}-\binom{N+k-2}{N}\quad \text{for } k\ge 2.
\]
We fix, once and for all, for each $k\in\mathbb{N}_0$ an orthonormal basis
\[
\{Y_{k,j}\}_{j=1}^{d_k}\subset L^2(\mathbb{S}^N)
\]
of $\mathcal{H}_k$ consisting of real-valued smooth spherical harmonics, i.e.,
\[
-\Delta_g Y_{k,j}=\lambda_k Y_{k,j}\quad \text{on }\mathbb{S}^N,\qquad j=1,\dots,d_k.
\]
In particular, $\mathcal{H}_0$ is spanned by constants, whereas $\mathcal{H}_1$
can be identified with the restrictions to $\mathbb{S}^N$ of linear functions on $\mathbb{R}^{N+1}$.
For further background on spherical Laplacian, see \cite[Section 4.4]{zelditch2017eigenfunctions}. Moreover, the family $\{Y_{k,j}\}_{k\ge 0,\,1\le j\le d_k}$ forms a complete orthonormal basis of
$L^2(\mathbb{S}^N)$. 
  
For the conformal fractional Laplacian $\mathscr P_g^s$ on $(\mathbb S^N,g)$, its action can be
described spectrally in terms of the spherical Laplace--Beltrami eigenvalues $\lambda_k$
(see \cite[Proposition 1.6 and Lemma 2.6]{chang2025fractional}). Namely, define
\begin{equation}\label{varphins}
\varphi_{N,s}(\lambda)
:=
\frac{
\Gamma\!\big(\frac12+s+\sqrt{\lambda+\frac{(N-1)^2}{4}}\big)
}{
\Gamma\!\big(\frac12-s+\sqrt{\lambda+\frac{(N-1)^2}{4}}\big)
},
\qquad \lambda\ge 0,\ s\in(0,1),\ N>2s,
\end{equation}
so that $\varphi_{N,s}(\lambda)>0$ for all $\lambda\ge 0$.
More precisely, if $\Phi\in\mathcal H_k$, then $\Phi$ is also
an eigenfunction of $\mathscr P_g^s$ with eigenvalue $\varphi_{N,s}(\lambda_k)$, i.e.,
\[
\mathscr P_g^s\Phi(z)=\varphi_{N,s}(\lambda_k)\,\Phi(z),
\qquad z\in\mathbb S^N.
\]

To introduce spectrum of the fractional--logarithmic operator, we consider the corresponding symbol
\begin{align}
\varphi^{s+\ln}_N(\lambda)
&:= \left.\frac{d}{dt}\varphi_{N,t}(\lambda)\right|_{t=s} \nonumber\\[1mm]
&= \varphi_{N,s}(\lambda)\Bigl[
\psi\!\Bigl(\tfrac12+s+\sqrt{\lambda+\tfrac14(N-1)^2}\Bigr)
+\psi\!\Bigl(\tfrac12-s+\sqrt{\lambda+\tfrac14(N-1)^2}\Bigr)
\Bigr],
\label{eq:frac-log-eigvalue}
\end{align}
where $\psi=\Gamma'/\Gamma$ is the digamma function. Our main result on the eigenvalues can be stated
as follows.

\begin{theorem}\label{thm:frac-log-basic}
Let $s\in (0,1)$ and let $N$ be an integer with $N>2s$.

\medskip
\noindent\textnormal{(i)}
If $\Phi\in \mathcal H_k$ for some $k\in\mathbb N_0$, then $\Phi$ is also an eigenfunction of
$\mathscr P^{s+\ln}_g$ with eigenvalue $\varphi^{s+\ln}_N(\lambda_k)$, namely
\begin{equation}\label{eq:eig-frac-log}
\mathscr P^{s+\ln}_g\Phi
= \varphi^{s+\ln}_N(\lambda_k)\,\Phi
\quad\text{on }\mathbb S^N.
\end{equation}

\medskip
\noindent\textnormal{(ii)}
If $\Psi\in C^{\beta}(\mathbb S^N)$ is nonzero for some $\beta>2s$ and satisfies
\[
\mathscr P^{s+\ln}_g\Psi=\mu\,\Psi
\quad\text{on }\mathbb S^N
\]
for some $\mu\in\mathbb R$, then there exists a unique $k\in\mathbb N_0$ such that
\[
\mu=\varphi^{s+\ln}_N(\lambda_k)
\quad\text{and}\quad
\Psi\in \mathcal H_k.
\]
\end{theorem}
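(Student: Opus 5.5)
For part (i) I will differentiate the eigenvalue identity for $\mathscr P_g^t$ with respect to $t$. Fix $\Phi\in\mathcal H_k$; it is smooth, so it lies in $C^\beta(\mathbb S^N)$ for every $\beta>2s$. For every $t$ in a small neighbourhood of $s$ inside $(0,1)\cap(0,N/2)$, the spectral description recalled before the theorem (from \cite{chang2025fractional}) gives
\[
\mathscr P_g^t\Phi(z)=\varphi_{N,t}(\lambda_k)\Phi(z),\qquad z\in\mathbb S^N .
\]
I then form the difference quotient
\[
\frac{\mathscr P^t_g\Phi-\mathscr P^s_g\Phi}{t-s}=\frac{\varphi_{N,t}(\lambda_k)-\varphi_{N,s}(\lambda_k)}{t-s}\,\Phi .
\]
By Theorem~\ref{prop:frac-log-kernel}(ii), the left side converges to $\mathscr P^{s+\ln}_g\Phi$ in $L^p$. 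Pointwise convergence is in fact already given by the definition \eqref{eq:def-frac-log-sphere}, which (i) of that theorem shows is well defined. On the right side, $t\mapsto\varphi_{N,t}(\lambda_k)$ is a quotient of Gamma functions evaluated at arguments $\tfrac12\pm t+\sqrt{\lambda_k+(N-1)^2/4}$. These arguments are positive because $\sqrt{\lambda_k+(N-1)^2/4}=k+\tfrac{N-1}{2}$, so $\tfrac12-t+k+\tfrac{N-1}{2}=k+\tfrac N2-t>0$. The function is therefore smooth in $t$, and its logarithmic derivative produces exactly the bracket in \eqref{eq:frac-log-eigvalue}. This proves \eqref{eq:eig-frac-log}.

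For part (ii) I will expand $\Psi$ in spherical harmonics and show that $\mathscr P^{s+\ln}_g$ acts diagonally on the Fourier coefficients. Write $\Psi_{k,j}=\langle \Psi,Y_{k,j}\rangle_{L^2}$. The key step is the symmetry identity
\[
\langle \mathscr P^{s+\ln}_g\Psi, Y_{k,j}\rangle=\langle \Psi,\mathscr P^{s+\ln}_gY_{k,j}\rangle=\varphi^{s+\ln}_N(\lambda_k)\Psi_{k,j}.
\]
I will obtain it from the representation \eqref{eq:frac-log-kernel}. The kernel $|z-\zeta|^{-N-2s}(-2\ln|z-\zeta|+b_{N,s})$ is symmetric, so the principal-value integral is symmetric with respect to the $L^2$ pairing on $C^\beta$ functions. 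I will justify this by truncating to $|z-\zeta|>\varepsilon$ and letting $\varepsilon\to0$; the convergence is dominated because $\beta>2s$. An alternative route avoids the kernel: apply symmetry of $\mathscr P^t_g$ for each $t$ to the difference quotients and pass to the limit with the $L^2$ convergence of Theorem~\ref{prop:frac-log-kernel}(ii). I prefer this second route since it only uses results already available.

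The eigen-equation then gives $(\varphi^{s+\ln}_N(\lambda_k)-\mu)\Psi_{k,j}=0$ for all $k,j$. Since $\Psi\neq0$, some coefficient $\Psi_{k,j}$ is nonzero, so $\mu=\varphi^{s+\ln}_N(\lambda_k)$ for at least one $k$. To conclude that $\Psi\in\mathcal H_k$ with $k$ unique, I need injectivity of the map $k\mapsto\varphi^{s+\ln}_N(\lambda_k)$. Then only one $k$ can carry nonzero coefficients, and $\Psi=\sum_j\Psi_{k,j}Y_{k,j}\in\mathcal H_k$.

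I expect this injectivity to be the main obstacle. Set $x=k+\tfrac N2$, so that
\[
f(x):=\varphi^{s+\ln}_N(\lambda_k)=\frac{\Gamma(x+s)}{\Gamma(x-s)}\bigl(\psi(x+s)+\psi(x-s)\bigr),\qquad x>s .
\]
My first attempt will be to show that $f$ is strictly increasing for $x\ge N/2$. Differentiating gives
\[
f'(x)=\frac{\Gamma(x+s)}{\Gamma(x-s)}\Bigl[\bigl(\psi(x+s)-\psi(x-s)\bigr)\bigl(\psi(x+s)+\psi(x-s)\bigr)+\psi'(x+s)+\psi'(x-s)\Bigr].
\]
The terms $\psi'(x\pm s)$ are positive and $\psi(x+s)-\psi(x-s)>0$. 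The only difficulty is that $\psi(x+s)+\psi(x-s)$ can be negative for small $x$, for instance when $N=1$ and $k=0$. In that range I will use the bound $\psi(x+s)-\psi(x-s)\le 2s\,\psi'(x-s)$ together with lower estimates on $\psi'$, such as $\psi'(y)>1/y$ and $\psi'(y)\ge 1/y^2$. If a uniform monotonicity argument fails, I will instead compare the finitely many small values of $k$ directly and use monotonicity for large $k$.
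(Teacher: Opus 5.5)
Part (i) and the reduction in (ii) to $(\varphi^{s+\ln}_N(\lambda_k)-\mu)\Psi_{k,j}=0$ are fine and follow the paper. Your justification via symmetry of $\mathscr P^t_g$ and $L^2$-convergence of difference quotients is actually more careful than the paper's termwise expansion.

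The gap is the step you flag yourself. Without injectivity of $k\mapsto\varphi^{s+\ln}_N(\lambda_k)$ you cannot conclude that $\Psi$ lies in a single $\mathcal H_k$, and the proposal does not prove it. Your primary plan cannot work, because $f$ is \emph{not} increasing on $[N/2,\infty)$ in general. Write the bracket in $f'$ as $\psi(x+s)^2+\psi'(x+s)+\psi'(x-s)-\psi(x-s)^2$. As $y\downarrow0$ we have $\psi(y)=-\frac1y+\psi(1)+O(y)$ and $\psi'(y)=\frac1{y^2}+O(1)$, so $\psi'(x-s)-\psi(x-s)^2=\frac{2\psi(1)}{x-s}+O(1)\to-\infty$ as $x-s\downarrow0$, while the first two terms stay bounded. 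At $x=N/2$ this happens for $N=1$, $s\uparrow\frac12$ and for $N=2$, $s\uparrow1$. For example, $N=2$, $s=0.9$ gives a bracket of about $-6.4$ at $x=1$. So no estimate such as $\psi(x+s)-\psi(x-s)\le2s\psi'(x-s)$ can rescue it. Your fallback (compare finitely many small $k$ directly, use monotonicity for large $k$) has the right shape. But it is the entire content of the step, it is not carried out, and it must hold uniformly over a continuum of $s$.

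The paper closes this in Lemma~\ref{lm es-21} and Proposition~\ref{pr spc func}. Since $s\mapsto\psi(a+s)+\psi(a-s)$ is decreasing, $\phi_0=\psi(x+s)+\psi(x-s)>0$ once $x$ exceeds an $s$-independent threshold ($a_0$, resp.\ $a_1$ for $N=1$). Where $\phi_0\ge0$, your own formula gives $f'>0$. This covers all $k$ for $N\ge4$, all $k\ge1$ for $N=2,3$, and all $k\ge2$ for $N=1$. The remaining step $k=0\to1$ follows from signs ($f(N/2)\le0<f(N/2+1)$ when $\phi_0(N/2)\le0$), plus an explicit shift computation for $N=1$.

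A shorter uniform route exploits that consecutive eigenvalues correspond to $x$ and $x+1$. Using $\Gamma(y+1)=y\Gamma(y)$ and $\psi(y+1)=\psi(y)+\frac1y$,
\[
f(x+1)-f(x)=\frac{2\,\Gamma(x+s)}{(x-s)\,\Gamma(x-s)}\Bigl(1+s\bigl[\psi(x+s)+\psi(x+1-s)\bigr]\Bigr).
\]
For $x>s$, the last factor exceeds $1+s\psi(2s)+s\psi(1)=\frac12+s\psi(1+2s)+s\psi(1)$. By concavity of $\psi$ on $[1,3]$, $\psi(1+2s)\ge\psi(1)+\frac32s$, so this is at least $\frac12+2s\psi(1)+\frac32s^2$, which is positive because $\psi(1)^2<\frac34$. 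This gives strict monotonicity along the spectrum for every $N$ and every admissible $s$ at once.
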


For part \textnormal{(ii)} of Theorem~\ref{thm:frac-log-basic}, a key step is to show that the
sequence $\{\varphi^{s+\ln}_N(\lambda_k)\}_{k\in\mathbb N_0}$ is strictly increasing in $k$.
%This requires a rather delicate analysis: a direct computation shows that, for general parameters$s\in(0,1)$ and $N>2s$, the function $\lambda\mapsto \varphi^{s+\ln}_N(\lambda)$ is not monotone on$[0,\infty)$. 
We establish the desired monotonicity along the discrete spectrum
$\{\lambda_k\}_{k\in\mathbb N_0}$ in Proposition~\ref{pr spc func}. Moreover, as shown in the proof
of Proposition~\ref{pr spc func}, we obtain the following finer observation:

\begin{rmk}\label{rem:sign-frac-log-eigs}
\textnormal{(a)}
If $N\ge 4$ and $s\in(0,1)$, then $\varphi^{s+\ln}_N(\lambda_k)>0$ for all $k\ge 0$.
If $N\in\{2,3\}$ and $s\in (0,1)$, then $\varphi^{s+\ln}_N(\lambda_k)>0$ for all $k\ge 1$.
If $N=1$ and $s\in[0,\tfrac12)$, then $\varphi^{s+\ln}_1(\lambda_k)>0$ for all $k\ge 2$.
\smallskip

\textnormal{(b)}
For $N=3$, there exists $s_0\in(0,1)$ such that $\varphi^{s_0+\ln}_3(\lambda_0)=0$.
Moreover,
\[
\varphi^{s+\ln}_3(\lambda_0)>0\quad \text{for } s\in(0,s_0),
\qquad
\varphi^{s+\ln}_3(\lambda_0)<0\quad \text{for } s\in(s_0,1).
\]
\smallskip

\textnormal{(c)}
For $N=2$, one has $\varphi^{s+\ln}_2(\lambda_0)<0$ for all $s\in(0,1)$.
\smallskip

\textnormal{(d)}
For $N=1$, there exists $s_1\in(0,\tfrac12)$ such that $\varphi^{s_1+\ln}_1(\lambda_1)=0$.
Moreover,
\[
\varphi^{s+\ln}_1(\lambda_1)>0\quad \text{for } s\in(0,s_1),
\qquad
\varphi^{s+\ln}_1(\lambda_1)<0\quad \text{for } s\in(s_1,\tfrac12),
\]
and
\[
\varphi^{s+\ln}_1(\lambda_0)<0\quad \text{for all } s\in(0,\tfrac12).
\]
\end{rmk}
 
\smallskip

Next, we introduce the $Q$-curvature and the Yamabe-type problem associated with the
fractional--logarithmic conformal operator $\mathscr P^{s+\ln}_g$ on the sphere.
The starting point is the conformal covariance of $\mathscr P^{s+\ln}_g$, which we establish first.

\begin{proposition}\label{prop:conf-frac-log}
Let $s\in(0,1)$ and let $N$ be an integer with $N>2s$. Let $\eta\in C^\infty(\mathbb S^N)$ be
positive. Then for every $u\in C^\infty(\mathbb S^N)$,
\begin{equation}\label{eq:conf-frac-log-eta}
\mathscr P^{s+\ln}_{\eta g}u
= \eta^{-\frac{N+2s}{4}}
\biggl[
\mathscr P^{s+\ln}_g\bigl(\eta^{\frac{N-2s}{4}}u\bigr)
-\frac12\,(\ln\eta)\,\mathscr P^s_g\bigl(\eta^{\frac{N-2s}{4}}u\bigr)
-\frac12\,\mathscr P^s_g\Bigl((\ln\eta)\,\eta^{\frac{N-2s}{4}}u\Bigr)
\biggr].
\end{equation}
Equivalently, setting $\varphi:=\eta^{\frac{N-2s}{4}}u$, one has
\begin{equation}\label{eq:conf-frac-log-phi}
\mathscr P^{s+\ln}_{\eta g}\bigl(\eta^{-\frac{N-2s}{4}}\varphi\bigr)
= \eta^{-\frac{N+2s}{4}}
\biggl[
\mathscr P^{s+\ln}_g(\varphi)
-\frac12\,(\ln\eta)\,\mathscr P^s_g(\varphi)
-\frac12\,\mathscr P^s_g\bigl((\ln\eta)\,\varphi\bigr)
\biggr].
\end{equation}
\end{proposition}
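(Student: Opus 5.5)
The plan is to differentiate the conformal covariance law of the fractional family $\mathscr P^t_g$ with respect to the order $t$ at $t=s$. For each $t\in(0,1)$ with $N>2t$, the operators $\mathscr P^t$ on the sphere satisfy \eqref{eq:conf-law-general} with $a=\frac{N-2t}{4}$ and $b=\frac{N+2t}{4}$:
\[
\mathscr P^t_{\eta g}u=\eta^{-\frac{N+2t}{4}}\,\mathscr P^t_g\bigl(\eta^{\frac{N-2t}{4}}u\bigr),\qquad u\in C^\infty(\mathbb S^N).
\]
I will take this as known from the scattering construction \cite{grahamZworski2003,gonzalez2016recent}, applied for every $t$ in a neighbourhood of $s$. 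Both sides are then functions of $t$, and I differentiate at $t=s$.

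On the left, $\frac{d}{dt}\mathscr P^t_{\eta g}u|_{t=s}=\mathscr P^{s+\ln}_{\eta g}u$ by definition \eqref{eq:def-frac-log-sphere}, applied with the metric $\eta g$. On the right, I use the product rule. The factor $\eta^{-\frac{N+2t}{4}}$ contributes $-\frac12(\ln\eta)\,\eta^{-\frac{N+2s}{4}}\mathscr P^s_g(\eta^{\frac{N-2s}{4}}u)$. The remaining term $\frac{d}{dt}\bigl[\mathscr P^t_g(v_t)\bigr]$ with $v_t=\eta^{\frac{N-2t}{4}}u$ splits into two parts:
\[
\frac{d}{dt}\mathscr P^t_g(v_t)\Big|_{t=s}=\mathscr P^{s+\ln}_g(v_s)+\mathscr P^s_g\bigl(\partial_t v_t|_{t=s}\bigr),\qquad \partial_t v_t=-\tfrac12(\ln\eta)\,v_t .
\]
Combining the three terms gives \eqref{eq:conf-frac-log-eta}. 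Substituting $\varphi=\eta^{\frac{N-2s}{4}}u$ then gives \eqref{eq:conf-frac-log-phi} immediately.

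The main obstacle is justifying the chain rule $\frac{d}{dt}\mathscr P^t_g(v_t)=\mathscr P^{s+\ln}_g v_s+\mathscr P^s_g(\partial_tv_t)$ when both the operator and its argument depend on $t$. I would write
\[
\frac{\mathscr P^t_g v_t-\mathscr P^s_g v_s}{t-s}=\frac{\mathscr P^t_g v_s-\mathscr P^s_g v_s}{t-s}+\mathscr P^t_g\Bigl(\frac{v_t-v_s}{t-s}\Bigr).
\]
The first quotient converges to $\mathscr P^{s+\ln}_g v_s$ by Theorem~\ref{prop:frac-log-kernel}(ii), or pointwise by the definition. For the second, $w_t:=(v_t-v_s)/(t-s)\to-\frac12(\ln\eta)v_s$ in $C^2(\mathbb S^N)$, since $\eta$ is smooth and positive. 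The representation \eqref{eq:frac-sphere} gives $\|\mathscr P^t_g w\|_\infty\le C\|w\|_{C^2}$ with $C$ uniform for $t$ near $s$, because $c_{N,t}$ and $A_{N,t}$ are continuous in $t$. Moreover $\mathscr P^t_g f\to\mathscr P^s_g f$ pointwise for fixed smooth $f$, by dominated convergence in the kernel. These two facts give $\mathscr P^t_g w_t\to\mathscr P^s_g(-\frac12(\ln\eta)v_s)$.

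A second point to check is that $\mathscr P^{s+\ln}_{\eta g}$ is meaningful for a non-round metric. I would simply define it as the $t$-derivative of the scattering operators $\mathscr P^t_{\eta g}$. The identity above shows this derivative exists, since the right-hand side is differentiable in $t$. Alternatively, as a sanity check, one can verify the formula spectrally on spherical harmonics by means of Theorem~\ref{thm:frac-log-basic} when $\eta$ comes from a conformal diffeomorphism.
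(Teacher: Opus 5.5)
Your proposal is correct and follows the same route as the paper. The paper also writes the covariance law at order $t$ as the product $\eta^{-\frac{N+2t}{4}}\cdot\mathscr P^t_g\bigl(\eta^{\frac{N-2t}{4}}u\bigr)$ and differentiates at $t=s$ using the product and chain rules. Your splitting of the difference quotient, together with the uniform bound $\|\mathscr P^t_g w\|_\infty\le C\|w\|_{C^2}$ for $t$ near $s$, justifies the chain-rule step $\frac{d}{dt}\mathscr P^t_g(v_t)=\mathscr P^{s+\ln}_g v_s+\mathscr P^s_g(\partial_t v_t)$, which the paper only asserts.
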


As in the purely fractional case, it is convenient to parametrize conformal metrics on $\mathbb S^N$
by a positive function $u\in C^\infty(\mathbb S^N)$ via
\begin{equation}\label{eq:conf-metric-u}
\tilde g = u^{\frac{4}{N-2s}}\,g.
\end{equation}
The reason for keeping the same conformal exponent $4/(N-2s)$ is structural:
the operator $\mathscr P^{s+\ln}_g$ is defined as the derivative with respect to the order
of the conformal fractional Laplacian $\mathscr P_g^s$, and therefore it inherits the same conformal
weight. In particular, under the change of metric \eqref{eq:conf-metric-u}, the principal
homogeneity is still dictated by $\mathscr P_g^s$, while the logarithmic nature of
$\mathscr P^{s+\ln}_g$ manifests itself only through additional $\ln u$--terms in the transformation law. In particular, we take
\[
\eta = u^{\frac{4}{N-2s}},
\qquad\text{so that}\qquad
\eta^{\frac{N-2s}{4}}=u,\quad
\eta^{-\frac{N+2s}{4}}=u^{-\frac{N+2s}{N-2s}},\quad
\ln\eta=\frac{4}{N-2s}\,\ln u.
\]
Substituting into \eqref{eq:conf-frac-log-phi} yields the conformal covariance law in the form
\begin{equation}\label{eq:conf-frac-log-u}
\mathscr P^{s+\ln}_{\tilde g}(\varphi)
= u^{-\frac{N+2s}{N-2s}}
\biggl[
\mathscr P^{s+\ln}_g(u\varphi)
-\frac{2}{N-2s}\,(\ln u)\,\mathscr P^s_g(u\varphi)
-\frac{2}{N-2s}\,\mathscr P^s_g\bigl((\ln u)\,u\varphi\bigr)
\biggr]
\end{equation}
for every $\varphi\in C^\infty(\mathbb S^N)$ and every positive $u\in C^\infty(\mathbb S^N)$.

\medskip

We define the \emph{fractional--logarithmic $Q$-curvature} associated with $\mathscr P^{s+\ln}_g$ by
\[
Q^{s+\ln}_{g}:=\mathscr P^{s+\ln}_{g}(1).
\]
Let $\tilde g=u^{\frac{4}{N-2s}}g$ with $u>0$. Taking $\varphi\equiv 1$ in
\eqref{eq:conf-frac-log-u} yields the transformation formula
\begin{equation}\label{eq:Q-frac-log-trans}
Q^{s+\ln}_{\tilde g}
= u^{-\frac{N+2s}{N-2s}}
\biggl[
\mathscr P^{s+\ln}_g(u)
-\frac{2}{N-2s}\,(\ln u)\,\mathscr P^s_g(u)
-\frac{2}{N-2s}\,\mathscr P^s_g\bigl((\ln u)\,u\bigr)
\biggr]
\quad\text{on }\mathbb S^N.
\end{equation}
Equivalently, \eqref{eq:Q-frac-log-trans} can be rewritten as the nonlinear equation for the conformal
factor $u$:
\begin{equation}\label{eq:frac-log-Yamabe-eq}
\mathscr P^{s+\ln}_g(u)
-\frac{2}{N-2s}\Bigl[(\ln u)\,\mathscr P^s_g(u)+\mathscr P^s_g\bigl((\ln u)\,u\bigr)\Bigr]
= Q^{s+\ln}_{\tilde g}\,u^{\frac{N+2s}{N-2s}}
\quad\text{on }\mathbb S^N.
\end{equation}
In particular, prescribing constant fractional--logarithmic $Q$-curvature,
$Q^{s+\ln}_{\tilde g}\equiv\mu\in\mathbb R$, leads to the fractional--logarithmic Yamabe-type problem
\begin{equation}\label{eq:frac-log-Yamabe-SN}
\mathscr P^{s+\ln}_g(u)
-\frac{2}{N-2s}\Bigl[(\ln u)\,\mathscr P^s_g(u)+\mathscr P^s_g\bigl((\ln u)\,u\bigr)\Bigr]
= \mu\,u^{\frac{N+2s}{N-2s}}
\quad\text{on }\mathbb S^N.
\end{equation}

\medskip

Next, we explain how solutions to the fractional--logarithmic Yamabe-type problem converge, as
$s\to0^+$, to solutions of the logarithmic Yamabe-type equation. Moreover, the latter equation admits
a complete classification of solutions; see \cite{fernandez2025conformal,chen2024positive,frank2020classification}.

\begin{proposition}\label{prop:limit-frac-log-yamabe-us}
Let $s_k\to0^+$ as $k\to+\infty$ and  $u_{s_k}\in C^\infty(\mathbb S^N)$ be positive solutions of
\begin{equation}\label{eq:frac-log-Yamabe-s-family-us}
\mathscr{P}^{s_k+\ln}_g(u_{s_k})
= \frac{2}{N-2s_k}\,\mathscr{P}^{s_k}_g(u_{s_k})\,\ln u_{s_k}
+ \frac{2}{N-2s_k}\,\mathscr{P}^{s_k}_g\bigl((\ln u_{s_k})\,u_{s_k}\bigr)
+ \mu_{s_k}\,u_{s_k}^{\frac{N+2s_k}{N-2s_k}}
\quad\text{on }\mathbb{S}^N.
\end{equation}
Assume that
\[
\mu_{s_k}\to\mu_0\in\mathbb R,
\qquad
u_{s_k}\to u_0 \quad\text{in }C^\beta(\mathbb S^N)\quad {\rm as}\ k\to+\infty
\]
for some $\beta>0$, and that $u_0>0$ on $\mathbb S^N$.
Then $u_0$ satisfies the logarithmic Yamabe-type equation
\begin{equation}\label{eq:log-yamabe-limit-us}
\mathscr P_g^{\ln}(u_0)
= \frac{4}{N}\,u_0\ln u_0 + \mu_0\,u_0
\quad\text{on }\mathbb S^N.
\end{equation}
\end{proposition}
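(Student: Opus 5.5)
We pass to the limit $k\to+\infty$ term by term in \eqref{eq:frac-log-Yamabe-s-family-us}, after multiplying by a test function to get robust (distributional) convergence, and then identify the limit using $\mathscr P^0_g=\mathrm{Id}$. Indeed, at $s=0$ we have $A_{N,0}=1$ and $c_{N,s}\to0$, so $\mathscr P^s_g v\to v$ as $s\to0^+$ for H\"older continuous $v$. This is also visible from the symbol, since $\varphi_{N,0}(\lambda)=1$. Formally the right-hand side then tends to
\[
\frac{2}{N}u_0\ln u_0+\frac2N u_0\ln u_0+\mu_0u_0=\frac4N u_0\ln u_0+\mu_0u_0,
\]
which is exactly \eqref{eq:log-yamabe-limit-us}. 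The work is in justifying each limit when both $s_k$ and $u_{s_k}$ vary.

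\textbf{Left-hand side.} Fix a test function $\phi\in C^\infty(\mathbb S^N)$ (for instance a spherical harmonic) and pair the equation with $\phi$. The operators $\mathscr P^{s}_g$ and $\mathscr P^{s+\ln}_g$ are self-adjoint, being diagonal in $\{Y_{k,j}\}$ with real symbols by Theorem~\ref{thm:frac-log-basic}(i). Hence
\[
\int_{\mathbb S^N}\mathscr P^{s_k+\ln}_g(u_{s_k})\,\phi\,dV_g=\int_{\mathbb S^N} u_{s_k}\,\mathscr P^{s_k+\ln}_g\phi\,dV_g .
\]
By Theorem~\ref{prop:frac-log-kernel}(iii), $\mathscr P^{s_k+\ln}_g\phi\to\mathscr P^{\ln}_g\phi$ uniformly. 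We also have $u_{s_k}\to u_0$ uniformly. The pairing therefore converges to $\int u_0\,\mathscr P^{\ln}_g\phi=\int \mathscr P^{\ln}_g(u_0)\,\phi$, where the last step uses self-adjointness of $\mathscr P^{\ln}_g$, which also follows from its spectral (limit) symbol. Self-adjointness should be justified by expanding in spherical harmonics and using that the operators are given by the symbols of Theorem~\ref{thm:frac-log-basic}(i).

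\textbf{Right-hand side.} Since $u_0>0$ and $u_{s_k}\to u_0$ in $C^\beta$, the $u_{s_k}$ are uniformly bounded below for large $k$. Consequently $\ln u_{s_k}\to\ln u_0$ and $u_{s_k}\ln u_{s_k}\to u_0\ln u_0$ in $C^\beta$. The nonlinear term $\mu_{s_k}u_{s_k}^{(N+2s_k)/(N-2s_k)}\to\mu_0u_0$ uniformly, since the exponent tends to $1$ and $u_{s_k}$ stays in a compact subset of $(0,\infty)$.

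For the term $\mathscr P^{s_k}_g(u_{s_k}\ln u_{s_k})$, pair again with $\phi$ and move the operator onto $\phi$. It then suffices to show $\mathscr P^{s_k}_g\phi\to\phi$ uniformly for smooth $\phi$. This follows from the kernel formula \eqref{eq:frac-sphere}: the singular integral is bounded by $C\,c_{N,s_k}/(2-2s_k)$ times a seminorm of $\phi$, which tends to $0$, while $A_{N,s_k}\to1$.

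The product term $(\ln u_{s_k})\,\mathscr P^{s_k}_g(u_{s_k})$ cannot simply be dualized. Write it as $\mathscr P^{s_k}_g(u_{s_k})$ paired with $\phi\ln u_{s_k}$, and move the operator onto $\phi\ln u_{s_k}$, which is bounded in $C^\beta$. What is then needed is a uniform estimate
\[
\|\mathscr P^{s}_g v - v\|_{L^\infty}\le C\bigl(c_{N,s}\,s^{-1}+|A_{N,s}-1|\bigr)\|v\|_{C^\beta}\quad\text{for } 2s<\beta ,
\]
with the right-hand side tending to $0$ as $s\to0^+$. The estimate comes from $\int_{\mathbb S^N}|z-\zeta|^{\beta-N-2s}\,dV_g(\zeta)\le C/(\beta-2s)$ together with $c_{N,s}=O(s)$. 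Using it, $\phi\ln u_{s_k}$ is replaced by its limit up to a vanishing error, and $u_{s_k}\to u_0$ uniformly handles the remaining factor.

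Combining the pieces gives $\int \mathscr P^{\ln}_g(u_0)\phi=\int(\tfrac4N u_0\ln u_0+\mu_0u_0)\phi$ for all test functions $\phi$. Both sides are continuous: the left side by Theorem~\ref{prop:frac-log-kernel}(iii), or by Proposition~\ref{prop:Dini-frac-log} in its logarithmic version, since $u_0\in C^\beta$. The identity therefore holds pointwise.

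\textbf{Main obstacle.} I expect the main difficulty to be the uniformity in $s$ of the estimate for $\mathscr P^s_g v - v$ when $v$ is only $C^\beta$ with small $\beta$. The condition $2s<\beta$ becomes harmless only for large $k$, and one must check that the constants do not blow up. The duality trick avoids needing Theorem~\ref{prop:frac-log-kernel}(iii) for a varying argument $u_{s_k}$, which is the subtle point.
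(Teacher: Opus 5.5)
Your proof is correct, but it is organized differently from the paper's.

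\textbf{The paper's route.} The paper passes to the limit pointwise. It asserts that $\mathscr P^{s_k+\ln}_g(u_{s_k})\to\mathscr P^{\ln}_g(u_0)$ and $\mathscr P^{s_k}_g(v_k)\to v_0$ uniformly for the \emph{varying} arguments $v_k=u_{s_k}$ and $v_k=(\ln u_{s_k})u_{s_k}$. It justifies this only by a brief appeal to Theorem~\ref{prop:frac-log-kernel} (the relevant part is (iii)) and to ``H\"older control''. The step is legitimate: every bound in the proof of part (iii) is linear in $\|u\|_{C^\beta}$, with constants uniform for $0<s\le s_0<\beta/2$, and $\|\mathscr P^{\ln}_g(u_{s_k}-u_0)\|_\infty\le C\|u_{s_k}-u_0\|_{C^\beta}$. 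The paper, however, leaves this uniformity implicit.

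\textbf{Your route for the leading term.} You dualize, so you only need Theorem~\ref{prop:frac-log-kernel}(iii) for a fixed smooth $\phi$, exactly as stated. The price is twofold. First, you must prove the symmetry $\int u_0\,\mathscr P^{\ln}_g\phi=\int\phi\,\mathscr P^{\ln}_g u_0$ for $u_0$ that is merely $C^\beta$. This is easiest by symmetrizing the kernel with Fubini, since $|u_0(z)-u_0(\zeta)|\,|z-\zeta|^{-N}$ is integrable on $\mathbb S^N\times\mathbb S^N$. The spherical-harmonic expansion of $u_0$ converges only in $L^2$, so that route would need an extra identification of the spectral and kernel definitions. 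Second, you must upgrade the weak identity to a pointwise one by continuity, which you do correctly.

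\textbf{The $\mathscr P^{s_k}_g$ terms.} Here the pairing is unnecessary. Your uniform estimate for $\mathscr P^s_g v-v$ already gives $\mathscr P^{s_k}_g(u_{s_k})\to u_0$ and $\mathscr P^{s_k}_g(u_{s_k}\ln u_{s_k})\to u_0\ln u_0$ uniformly, directly. That is precisely the paper's step, made rigorous.

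\textbf{One correction.} The factor in that estimate must be $c_{N,s}(\beta-2s)^{-1}$, not $c_{N,s}s^{-1}$. The latter tends to $c_N\neq 0$ and would not give convergence. Your own derivation, namely the bound $C/(\beta-2s)$ combined with $c_{N,s}=O(s)$, yields the former. That quantity is $O(s)$ once $2s\le\beta/2$.
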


\medskip

    Next, we relate the operator on $\mathbb S^N$ to the Euclidean fractional--logarithmic Laplacian
via the following intertwining identity.

\begin{proposition}\label{prop:sphere-RN-frac-log}
Let $s\in(0,1)$ and  $u\in C^\infty(\mathbb S^N)$. Then, for every $x\in\mathbb R^N$,
\begin{equation}\label{eq:intertwining-frac-log}
\mathcal T_s\!\bigl[\mathscr P^{s+\ln}_g u\bigr](x)
= \phi(x)^{-2s}\Bigl[
(-\Delta)^{s+\ln}\,\mathcal T_s[u](x)
-(-\Delta)^s\!\bigl((\ln\phi)\,\mathcal T_s[u]\bigr)(x)
-\bigl(\ln\phi(x)\bigr)\,(-\Delta)^s\mathcal T_s[u](x)
\Bigr].
\end{equation}
\end{proposition}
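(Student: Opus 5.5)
We will differentiate, with respect to the order $t$ at $t=s$, the classical intertwining identity between $\mathscr P^t_g$ and $(-\Delta)^t$ under stereographic projection. Here $\phi(x)=\frac{2}{1+|x|^2}$ is the conformal factor of the inverse stereographic projection $\sigma:\mathbb R^N\to\mathbb S^N$, so that $\sigma^*g=\phi^2|dx|^2$. The weighted pullback is $\mathcal T_t[u](x)=\phi(x)^{\frac{N-2t}{2}}u(\sigma(x))$. The known identity (see \cite{gonzalez2016recent,chang2011fractional}) reads, for every $t\in(0,1)$ and $u\in C^\infty(\mathbb S^N)$,
\[
\phi(x)^{\frac{N+2t}{2}}\,(\mathscr P^t_g u)(\sigma(x))=(-\Delta)^t\bigl(\phi^{\frac{N-2t}{2}}\,u\circ\sigma\bigr)(x).
\]
In the notation of the statement this is $\mathcal T_t[\mathscr P^t_g u]=\phi^{-2t}(-\Delta)^t\mathcal T_t[u]$. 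Both sides are smooth functions of $t$ for fixed $x$, and we will differentiate them.

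\textbf{Differentiating the left-hand side.} The derivative of $\phi^{\frac{N+2t}{2}}(\mathscr P^t_g u)\circ\sigma$ at $t=s$ equals
\[
\phi^{\frac{N+2s}{2}}\Bigl[(\ln\phi)\,(\mathscr P^s_g u)\circ\sigma+(\mathscr P^{s+\ln}_g u)\circ\sigma\Bigr],
\]
by the definition \eqref{eq:def-frac-log-sphere}, whose pointwise existence is guaranteed by Theorem~\ref{prop:frac-log-kernel}.

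\textbf{Differentiating the right-hand side.} Write $w_t=\phi^{\frac{N-2t}{2}}u\circ\sigma$, so that $\partial_t w_t=-(\ln\phi)\,w_t$. The derivative of $(-\Delta)^t w_t$ at $t=s$ is then
\[
(-\Delta)^{s+\ln}w_s+(-\Delta)^s(\partial_t w_t|_{t=s})=(-\Delta)^{s+\ln}w_s-(-\Delta)^s\bigl((\ln\phi)\,w_s\bigr).
\]
Next, we substitute $\phi^{\frac{N+2s}{2}}(\mathscr P^s_g u)\circ\sigma=(-\Delta)^s w_s$, which is the undifferentiated identity. Dividing by $\phi^{\frac{N+2s}{2}}$ gives
\[
(\mathscr P^{s+\ln}_g u)\circ\sigma=\phi^{-\frac{N+2s}{2}}\Bigl[(-\Delta)^{s+\ln}w_s-(-\Delta)^s\bigl((\ln\phi)\,w_s\bigr)-(\ln\phi)\,(-\Delta)^s w_s\Bigr].
\]
Multiplying by $\phi^{\frac{N-2s}{2}}$, i.e.\ applying $\mathcal T_s$, produces exactly \eqref{eq:intertwining-frac-log}, since $w_s=\mathcal T_s[u]$. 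If the paper's normalization of $\mathcal T_s$ or $\phi$ differs, for instance $\phi=(1+|x|^2)/2$ or an extra constant, the same algebra goes through with the corresponding sign and power bookkeeping.

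\textbf{Main obstacle: justifying the product rule on the right.} We must show that $t\mapsto(-\Delta)^t w_t(x)$ is differentiable at $s$ with derivative given by the two-term formula. The function $w_t$ is smooth, but it decays only like $|x|^{-(N-2t)}$, so it is not Schwartz.

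To handle this, we split
\[
\frac{(-\Delta)^t w_t-(-\Delta)^s w_s}{t-s}=(-\Delta)^t\frac{w_t-w_s}{t-s}+\frac{(-\Delta)^t w_s-(-\Delta)^s w_s}{t-s}.
\]
For the second term we use the Euclidean singular-integral representation of $(-\Delta)^{s+\ln}$, which is the analogue of \eqref{eq:frac-log-kernel} with kernel $|x-y|^{-N-2s}(-2\ln|x-y|+b)$. Convergence then follows by dominated convergence near the diagonal, using $w_s\in C^2$, and at infinity, where the tail $\int|w_s(y)|\,|y|^{-N-2t}(1+\ln|y|)\,dy$ is finite because $|w_s|\lesssim(1+|y|)^{-(N-2s)}$ and $N+2t+N-2s>N$.

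For the first term, $(w_t-w_s)/(t-s)\to-(\ln\phi)\,w_s$ in $C^2_{\rm loc}$. The weighted tails $(1+|x|)^{N-2s-\varepsilon}\,|\cdot|$ are controlled uniformly for $t$ near $s$, since $\ln\phi$ grows only logarithmically. These bounds give the continuity of $(t,v)\mapsto(-\Delta)^t v(x)$ in the needed topology, which closes the argument.

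Alternatively, one could avoid Euclidean analysis entirely. Since $\mathscr P^t_g$ and $(-\Delta)^t$ are both diagonalized by spherical harmonics and Fourier transform respectively, one can verify the identity through the kernel representations, differentiating the explicit kernel identity $|\sigma(x)-\sigma(y)|=\phi(x)^{1/2}\phi(y)^{1/2}|x-y|$. That route shows directly where the $\ln\phi$ terms arise.
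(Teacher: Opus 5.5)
Your proposal is correct and follows essentially the same route as the paper. Both arguments differentiate the fractional intertwining identity $\mathcal T_t[\mathscr P^t_g u]=\phi^{-2t}(-\Delta)^t\mathcal T_t[u]$ in $t$ at $t=s$, use $\partial_t\mathcal T_t[u]=-(\ln\phi)\,\mathcal T_t[u]$ and the product rule for $(-\Delta)^t w_t$, and cancel with the undifferentiated identity; multiplying through by $\phi^{\frac{N+2t}{2}}$ first is only cosmetic, and your splitting argument justifying differentiability of $t\mapsto(-\Delta)^t w_t(x)$ despite the polynomial decay of $w_t$ is a useful addition, since the paper applies that product rule formally.
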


As a consequence, the constant fractional--logarithmic $Q$-curvature problem on $\mathbb S^N$
is equivalent to a Yamabe-type equation on $\mathbb R^N$. More precisely, for $N>2s$ and $\mu\in\mathbb R$,
one is led to
\begin{equation}\label{eq:frac-log-Yamabe-RN}
(-\Delta)^{s+\ln} v
-\frac{2}{N-2s}\Bigl[(\ln v)\,(-\Delta)^s v+(-\Delta)^s\bigl(v\ln v\bigr)\Bigr]
= \mu\,v^{\frac{N+2s}{N-2s}}
\quad\text{in }\mathbb R^N,
\end{equation}
where $(-\Delta)^{s+\ln}$ denotes the fractional--logarithmic Laplacian on $\mathbb R^N$
introduced in \cite{chen2026fractional,chen2026potential}. More precisely, for $u\in C_{\mathrm{loc}}^2(\mathbb{R}^N)\cap L^\infty(\mathbb{R}^N)$,  the fractional--logarithmic Laplacian is defined as the derivative of the fractional Laplacian with respect to the order:
\[
  (-\Delta)^{s+\ln}u(x)
  :=\left.\frac{d}{dt}\,(-\Delta)^t u(x)\right|_{t=s}.
\]
Equivalently, it admits the singular-integral representation
\begin{equation}\label{op-s+log}
    (-\Delta)^{s+\ln} u(x)
  = c_{N,s}\,\mathrm{p.v.}\!\int_{\mathbb{R}^N}
  \frac{u(x)-u(y)}{|x-y|^{N+2s}}
  \bigl(-2\ln|x-y|+b_{N,s}\bigr)\,dy,
\end{equation}
where \(c_{N,s},b_{N,s}\) are defined in \eqref{eq:constants} \eqref{eq:frac-log-constants}, respectively. 

In fact, thanks to Proposition~\ref{prop:sphere-RN-frac-log}, the spherical operator
$\mathscr P^{s+\ln}_g$ and the Euclidean operator $(-\Delta)^{s+\ln}$ are intertwined through the
conformal pullback $\mathcal T_s$. Consequently, one may equivalently take either side: the
spectral definition on $\mathbb S^N$ or the singular-integral definition on $\mathbb R^N$ as the
starting point, and recover the corresponding operator on the other space via
\eqref{eq:intertwining-frac-log}.

We can now formulate the precise correspondence between the spherical and Euclidean
fractional--logarithmic Yamabe-type problems via stereographic projection and the conformal pullback
$\mathcal T_s$.

 \begin{theorem}
\label{thm:frac-log-correspondence}
  Let $s\in(0,1)$,  $\mu\in\mathbb{R}$, 
  $u,v$ be positive classical solutions of (\ref{eq:frac-log-Yamabe-SN}) on the unit sphere,  of \eqref{eq:frac-log-Yamabe-RN} in $ \mathbb{R}^N$, respectively. Then 
  $\mathcal T_s[u]$,   $\mathcal T_s^{-1}[v]$ are  positive solutions of  \eqref{eq:frac-log-Yamabe-RN},  
  (\ref{eq:frac-log-Yamabe-SN}) respectively, where  $\mathcal{T}_s$ denotes the conformal pullback operator mapping functions defined on $\mathbb{S}^N$ to ones on $\mathbb{R}^N$, as defined in (\ref{taus}), and $\mathcal{T}_s^{-1}$ is its inverse.
   
\end{theorem}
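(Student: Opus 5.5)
The plan is to derive the correspondence directly from the intertwining identity of Proposition~\ref{prop:sphere-RN-frac-log}, together with the classical intertwining for $\mathscr P^s_g$. We use the standard form of the conformal pullback $\mathcal T_s[u](x)=\phi(x)^{\frac{N-2s}{2}}\,u(\pi^{-1}(x))$, where $\pi$ is the stereographic projection and $\phi(x)=\frac{2}{1+|x|^2}$. With this normalization the known fractional intertwining reads
\[
\mathcal T_s[\mathscr P^s_g w](x)=\phi(x)^{-2s}(-\Delta)^s\mathcal T_s[w](x).
\]
It holds for smooth $w$, and by the same argument for $w$ of the form $u\ln u$ with $u>0$ smooth. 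We also use the multiplicativity rules $\mathcal T_s[fw]=(f\circ\pi^{-1})\,\mathcal T_s[w]$ and $\mathcal T_s[u^{p}]=\phi^{\frac{N+2s}{2}}(u\circ\pi^{-1})^{p}$ for $p=\frac{N+2s}{N-2s}$.

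Let $u>0$ solve \eqref{eq:frac-log-Yamabe-SN} and set $v:=\mathcal T_s[u]>0$. Apply $\mathcal T_s$ to each term of \eqref{eq:frac-log-Yamabe-SN}.
\begin{enumerate}
\item[(1)] By Proposition~\ref{prop:sphere-RN-frac-log}, the leading term becomes $\phi^{-2s}\bigl[(-\Delta)^{s+\ln}v-(-\Delta)^s((\ln\phi)v)-(\ln\phi)(-\Delta)^s v\bigr]$.
\item[(2)] For the $\ln u$ terms, write $\ln(u\circ\pi^{-1})=\ln v-\frac{N-2s}{2}\ln\phi$. Then
\[
\mathcal T_s[(\ln u)\mathscr P^s_g u]=\phi^{-2s}\Bigl(\ln v-\tfrac{N-2s}{2}\ln\phi\Bigr)(-\Delta)^s v,
\]
\[
\mathcal T_s[\mathscr P^s_g(u\ln u)]=\phi^{-2s}(-\Delta)^s\Bigl(v\ln v-\tfrac{N-2s}{2}(\ln\phi)v\Bigr).
\]
\item[(3)] Multiplying by $-\frac{2}{N-2s}$, the $\ln\phi$ contributions are exactly $+\phi^{-2s}\bigl[(\ln\phi)(-\Delta)^s v+(-\Delta)^s((\ln\phi)v)\bigr]$. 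These cancel the two correction terms in (1).
\item[(4)] The right-hand side becomes $\mu\,\phi^{\frac{N+2s}{2}}(u\circ\pi^{-1})^{p}=\mu\,\phi^{-2s}v^{p}$, since $\phi^{\frac{N+2s}{2}}\phi^{-\frac{N-2s}{2}p}=\phi^{-2s}$.
\end{enumerate}
Cancelling the common factor $\phi^{-2s}>0$ gives \eqref{eq:frac-log-Yamabe-RN} for $v$.

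The converse runs the same computation backwards. Given $v>0$ solving \eqref{eq:frac-log-Yamabe-RN}, set $u:=\mathcal T_s^{-1}[v]=\phi^{-\frac{N-2s}{2}}v$ composed with $\pi$. The identities above hold as equalities of functions on $\mathbb R^N$, so \eqref{eq:frac-log-Yamabe-RN} is equivalent to $\mathcal T_s$ applied to the left-hand side minus the right-hand side of \eqref{eq:frac-log-Yamabe-SN} vanishing. Since $\mathcal T_s$ is injective (multiplication by a positive function composed with a bijection, the north pole being a null set and handled by continuity), $u$ solves \eqref{eq:frac-log-Yamabe-SN}.

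The main obstacle is regularity. Proposition~\ref{prop:sphere-RN-frac-log} is stated for $u\in C^\infty(\mathbb S^N)$, while a classical solution $v$ on $\mathbb R^N$ need not pull back to a smooth function on $\mathbb S^N$. In particular, for $u=\mathcal T_s^{-1}[v]$ we must know that $u$ extends continuously, with sufficient Hölder or Dini regularity, across the north pole. To handle this, we first extend Proposition~\ref{prop:sphere-RN-frac-log} and the $\mathscr P^s_g$ intertwining to $C^\beta$ functions with $\beta>2s$ (and to $u\ln u$). This uses density of smooth functions and the $L^p$ and pointwise continuity furnished by Theorem~\ref{prop:frac-log-kernel} and Proposition~\ref{prop:Dini-frac-log}. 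We then interpret ``classical solution'' on $\mathbb R^N$ so that the pulled-back $u$ lies in that class; the decay $v\sim|x|^{-(N-2s)}$ encoded in $\mathcal T_s^{-1}$ is what guarantees this.
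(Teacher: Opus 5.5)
Your proposal follows the paper's proof essentially step for step: apply $\mathcal T_s$ termwise, use Proposition~\ref{prop:sphere-RN-frac-log} for the leading term and the fractional intertwining (with $\ln(u\circ\pi^{-1})=\ln v-\frac{N-2s}{2}\ln\phi$) for the two $\ln u$ terms, observe that the $\ln\phi$ contributions cancel, and reverse the computation for the converse; your regularity caveat raises a point the paper does not address, though your resolution of it is only sketched. One arithmetic slip to fix: by definition $\mathcal T_s[u^p]=\phi^{\frac{N-2s}{2}}(u\circ\pi^{-1})^p$, and the correct bookkeeping is $\phi^{\frac{N-2s}{2}}\phi^{-\frac{N+2s}{2}}=\phi^{-2s}$ (your product $\phi^{\frac{N+2s}{2}}\phi^{-\frac{N-2s}{2}p}$ actually equals $1$), although your final term $\mu\,\phi^{-2s}v^{p}$ is the right one.
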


We next exhibit an explicit family of positive solutions to the fractional--logarithmic Yamabe-type
equations. Remarkably, the solutions are given by the standard fractional ``bubble'' profile on
$\mathbb R^N$ and the corresponding constant solution on $\mathbb S^N$, linked by the conformal
pullback $\mathcal T_s$.

\begin{theorem}\label{prop:radial-bubble-frac-log}
Let $s\in(0,1)$, $N>2s$ and $C>0$. Define
\[
v_{s,C}(x):=C\left(\frac{2}{1+|x|^2}\right)^{\frac{N-2s}{2}},
\qquad x\in\mathbb R^N,
\]
and
\[
u_C\equiv C \quad\text{on }\mathbb S^N.
\]
If
\[
\mu
=
C^{-\frac{4s}{N-2s}}
\Bigl(
A'_{N,s}
-\frac{4}{N-2s}\,A_{N,s}\ln C
\Bigr),
\]
where $A_{N,s}$ and $A'_{N,s}$ are given in \eqref{eq:constants} and \eqref{eq:A-Ns-prime}, respectively,
then $u_C$ and $v_{s,C}$ solve \eqref{eq:frac-log-Yamabe-SN} on $\mathbb S^N$ and
\eqref{eq:frac-log-Yamabe-RN} on $\mathbb R^N$, respectively. Moreover,
\[
v_{s,C}=\mathcal T_s[u_C],
\]
where $\mathcal T_s$ denotes the conformal pullback from $\mathbb S^N$ to $\mathbb R^N$.
\end{theorem}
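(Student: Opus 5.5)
The plan is to verify the spherical equation directly for the constant $u_C\equiv C$, and then transfer it to $\mathbb R^N$ using Theorem~\ref{thm:frac-log-correspondence}.

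\emph{Step 1: action on constants.} Constants lie in $\mathcal H_0$ with $\lambda_0=0$. By the spectral description of $\mathscr P^s_g$,
\[
\mathscr P^s_g(1)=\varphi_{N,s}(0)=\frac{\Gamma(\frac N2+s)}{\Gamma(\frac N2-s)}=A_{N,s},
\]
since $\sqrt{(N-1)^2/4}=\frac{N-1}{2}$. This value also follows from \eqref{eq:frac-sphere}: the singular integral vanishes on constants. In the same way, either Theorem~\ref{thm:frac-log-basic}(i) with \eqref{eq:frac-log-eigvalue}, or \eqref{eq:frac-log-kernel} directly, gives
\[
\mathscr P^{s+\ln}_g(1)=A_{N,s}\bigl(\psi(\tfrac N2+s)+\psi(\tfrac N2-s)\bigr)=A'_{N,s}.
\]
Both operators are linear, and $\ln u_C=\ln C$ is constant. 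Therefore
\[
\mathscr P^s_g(u_C)=CA_{N,s},\qquad \mathscr P^s_g\bigl((\ln u_C)u_C\bigr)=C\ln C\,A_{N,s},\qquad \mathscr P^{s+\ln}_g(u_C)=CA'_{N,s}.
\]

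\emph{Step 2: the spherical equation.} Substituting these into the left side of \eqref{eq:frac-log-Yamabe-SN} gives
\[
CA'_{N,s}-\frac{4}{N-2s}\,C\ln C\,A_{N,s}.
\]
The right side is $\mu C^{\frac{N+2s}{N-2s}}$. Dividing by $C^{\frac{N+2s}{N-2s}}$ and using $1-\frac{N+2s}{N-2s}=-\frac{4s}{N-2s}$, the equation holds exactly when $\mu$ takes the stated value.

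\emph{Step 3: transfer to $\mathbb R^N$.} I would recall the definition \eqref{taus} of $\mathcal T_s$. It is of the form $\mathcal T_s[u](x)=\phi(x)^{\frac{N-2s}{2}}\,u(\sigma^{-1}(x))$, where $\sigma$ is the stereographic projection and $\phi(x)=\frac{2}{1+|x|^2}$ is the conformal factor, so that $\sigma^*g=\phi^2|dx|^2$. Applying this to $u_C$ gives $\mathcal T_s[u_C]=C\phi^{\frac{N-2s}{2}}=v_{s,C}$. This function is positive and smooth. Since $u_C$ is a positive classical solution on $\mathbb S^N$, Theorem~\ref{thm:frac-log-correspondence} shows that $v_{s,C}$ solves \eqref{eq:frac-log-Yamabe-RN} with the same $\mu$.

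\emph{Main obstacle.} The only delicate point is matching normalizations: the exponent and the form of $\phi$ in \eqref{taus} must agree with the stated formula for $v_{s,C}$. If \eqref{taus} uses a different convention, for instance $\phi=\frac{1+|x|^2}{2}$ with a negative exponent, the identity still results after rewriting. No regularity issues arise, because $u_C$ is smooth and $v_{s,C}$ is smooth with decay $|x|^{-(N-2s)}$, which is exactly the decay under which Proposition~\ref{prop:sphere-RN-frac-log} applies.
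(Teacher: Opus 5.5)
Your proposal is correct and follows the paper's proof essentially step for step. You evaluate $\mathscr P^s_g$ and $\mathscr P^{s+\ln}_g$ on constants; the paper notes that the singular integrals vanish, and your spectral check at $k=0$ gives the same values $A_{N,s}$ and $A'_{N,s}$. You then solve for $\mu$ and transfer to $\mathbb R^N$ via $\mathcal T_s[u_C]=v_{s,C}$ and Theorem~\ref{thm:frac-log-correspondence}, exactly as the paper does.
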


Theorem~\ref{prop:radial-bubble-frac-log} provides explicit positive solutions to the
fractional--logarithmic Yamabe-type equations \eqref{eq:frac-log-Yamabe-SN} and \eqref{eq:frac-log-Yamabe-RN}.
A notable feature is that these solutions coincide with the standard bubbles arising in the
fractional Yamabe equation. This is explained by the mixed-order structure of the operator in
\eqref{eq:frac-log-Yamabe-RN}: the combination
\[
(-\Delta)^{s+\ln}v
-\frac{2}{N-2s}\Bigl[(\ln v)\,(-\Delta)^s v+(-\Delta)^s\bigl(v\ln v\bigr)\Bigr]
\quad\text{in }\mathbb R^N
\]
acts, for the bubble profile, as an effective fractional operator of order $2s$.
Recall that the classical fractional Yamabe equation
\[
(-\Delta)^s w=\mu\, w^{\frac{N+2s}{N-2s}}
\quad\text{in }\mathbb R^N
\]
admits (and, up to the natural conformal symmetries, only admits) the bubble family $v_{s,C}$;
see \cite{CLO06}. Consequently, the leading balance in \eqref{eq:frac-log-Yamabe-RN} is governed by a
fractional-order mechanism closely paralleling the classical problem. Nevertheless, because the
equation involves a genuine interaction between operators of different orders, the uniqueness of
positive solutions for \eqref{eq:frac-log-Yamabe-SN}--\eqref{eq:frac-log-Yamabe-RN} remains open.

\subsection{Sharp Sobolev-Type Inequalities}

In this subsection, we further explore the connection between inequalities associated with conformal operators on the sphere and sharp Sobolev-type inequalities for nonlocal operators in the Euclidean setting. We begin with a proposition showing that the sharp logarithmic Sobolev inequality on \(\mathbb R^N\), formulated in terms of the logarithmic Laplacian, is closely related to the celebrated logarithmic Beckner inequality on the Sphere; see \cite{beckner1997logarithmic,frank2020classification}.

In this paper, $\mathcal S(\mathbb R^n)$ denotes the Schwartz space of all smooth functions on $\mathbb R^n$ that, together with all their derivatives, decay faster than any polynomial at infinity and the Fourier transform is defined by
\[
\widehat{f}(\xi):=(2\pi)^{-N/2}\int_{\mathbb R^N}e^{-ix\cdot \xi}f(x)\,dx.
\]
We first recall the definition of the logarithmic Laplacian in $\mathbb{R}^N$. It was introduced as the derivative of the fractional Laplacian with respect to the order at $t=0$, see \cite{CW19}
\[
(-\Delta)^{\ln}u(x)
:=\left.\frac{d}{dt}\,(-\Delta)^t u(x)\right|_{t=0}.
\]
Equivalently, if $u\in L^1(\mathbb{R}^N)$ is Dini continuous, then $(-\Delta)^{\ln}u$ admits the singular integral representation
\[
(-\Delta)^{\ln}u(x)
=
c_N\,\mathrm{p.v.}\!\int_{\mathbb{R}^N}
\frac{u(x)\mathbf{1}_{B_1(x)}(y)-u(y)}{|x-y|^N}\,dy
+\rho_N\,u(x),
\]
where
\[
c_N:=\pi^{-N/2}\Gamma\!\left(\frac{N}{2}\right),
\qquad
\rho_N:=2\ln 2+\psi\!\left(\frac{N}{2}\right)+\Gamma'(1).
\]

\begin{proposition}\label{prop:logS-beckner-equivalence}
The sharp Euclidean logarithmic Sobolev inequality
\[\frac{2}{N}\int_{\mathbb{R}^N}\frac{|v|^2}{\|v\|_2^2}
\ln\!\left(\frac{|v|^2}{\|v\|_2^2}\right)\,dx
\le a_N+\frac{\langle v,(-\Delta)^{\ln} v\rangle}{\|v\|_2^2}
\quad {\rm for}\ \, v\in C_c^{\infty}(\mathbb{R}^N)\setminus\{0\},\]
holds, and it is equivalent to the Beckner's logarithmic Sobolev inequality on $\mathbb{S}^N$
\[\iint_{\mathbb S^N\times\mathbb S^N}
\frac{(u(z)-u(\zeta))^2}{|z-\zeta|^N}\,dV_g(z)\,dV_g(\zeta)
\ge
C_N\int_{\mathbb S^N}|u|^2
\ln\!\left(\frac{|u|^2\,|\mathbb S^N|}{\|u\|_2^2}\right)\,dV_g,
\quad{\rm for}\ \,  u\in C^\infty(\mathbb S^N)\setminus \{0\},\]
where
\[a_N
:=\frac{2}{N}\ln\!\left(\frac{\Gamma(N)}{\Gamma(\frac N2)}\right)-\ln(4\pi)-2\psi\!\left(\frac N2\right),\quad C_N:=\frac{4}{N}\,\frac{\pi^{N/2}}{\Gamma(\frac N2)}.\]
\end{proposition}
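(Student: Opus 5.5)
The strategy is to transfer the spherical inequality to $\mathbb R^N$ through the $s=0$ conformal pullback. Let $\mathcal S:\mathbb R^N\to\mathbb S^N$ be inverse stereographic projection, with conformal factor $\phi(x)=\frac{2}{1+|x|^2}$, so that $dV_g=\phi^N dx$ and
\[
|\mathcal S(x)-\mathcal S(y)|=\phi(x)^{1/2}\phi(y)^{1/2}|x-y|.
\]
For $u$ on $\mathbb S^N$ set $v:=\mathcal T_0[u]=\phi^{N/2}\,u\circ\mathcal S$. This is an $L^2$ isometry: $\|v\|_{L^2(\mathbb R^N)}=\|u\|_{L^2(\mathbb S^N)}$. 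Since $C_c^\infty(\mathbb R^N)$ maps into functions on $\mathbb S^N$ that are smooth away from the north pole and vanish near it, and $C^\infty(\mathbb S^N)$ maps into functions with the decay $|x|^{-N}$, a density argument on both sides will be needed. I would run it in the energy space of the quadratic form, where both sides are continuous.

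\textbf{Step 1: the quadratic forms.} First I check that $\mathscr P_g^{\ln}$ and $(-\Delta)^{\ln}$ are intertwined by $\mathcal T_0$ at the level of forms:
\[
\langle v,(-\Delta)^{\ln}v\rangle_{\mathbb R^N}=\langle u,\mathscr P_g^{\ln}u\rangle_{\mathbb S^N}+\int_{\mathbb S^N}\text{(correction)}\,|u|^2.
\]
The cleanest route is to differentiate at $t=0$ the exact intertwining $\langle \mathcal T_t u,(-\Delta)^t\mathcal T_t u\rangle=\langle u,\mathscr P_g^t u\rangle$, which is the $s=0$ analogue of Proposition~\ref{prop:sphere-RN-frac-log}. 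Since $\mathcal T_t[u]=\phi^{(N-2t)/2}u\circ\mathcal S$, the $t$-derivative of $\mathcal T_t$ produces two terms $-\int \ln\phi\,|\cdot|^2$, one from each factor. The result is
\[
\langle v,(-\Delta)^{\ln}v\rangle=\langle u,\mathscr P^{\ln}_g u\rangle+2\int_{\mathbb R^N}\ln\phi\,|v|^2\,dx .
\]
Using $\mathscr P_g^{\ln}1=A_N$ and the representation \eqref{eq:log-def}, the spherical form equals
\[
\langle u,\mathscr P_g^{\ln}u\rangle=\frac{c_N}{2}\iint\frac{(u(z)-u(\zeta))^2}{|z-\zeta|^N}\,dV_g\,dV_g+A_N\|u\|_2^2 .
\]

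\textbf{Step 2: the entropy terms.} Here $|v|^2=\phi^N|u\circ\mathcal S|^2$, and I normalize $\|u\|_2=1$. Then
\[
\int|v|^2\ln|v|^2\,dx=\int_{\mathbb S^N}|u|^2\ln|u|^2\,dV_g+N\int_{\mathbb R^N}\ln\phi\,|v|^2\,dx .
\]
Multiplied by $\frac2N$, this produces exactly $2\int\ln\phi\,|v|^2$, which cancels the term from Step 1. This cancellation is the conformal invariance, and it is the key check. The Euclidean inequality then becomes
\[
\frac{c_N}{2}\iint\frac{(u(z)-u(\zeta))^2}{|z-\zeta|^N}+A_N+a_N\ \ge\ \frac2N\int|u|^2\ln|u|^2 .
\]

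\textbf{Step 3: constants.} Writing $\ln(|u|^2|\mathbb S^N|)=\ln|u|^2+\ln|\mathbb S^N|$, Beckner's inequality becomes the display above provided
\[
\frac{2}{N c_N}=\frac{C_N}{2}\qquad\text{and}\qquad A_N+a_N=\frac2N\ln|\mathbb S^N| .
\]
The first holds because $c_N^{-1}=\pi^{N/2}/\Gamma(\tfrac N2)$. For the second, $|\mathbb S^N|=2\pi^{(N+1)/2}/\Gamma(\tfrac{N+1}2)$, and the duplication formula $\Gamma(N)=2^{N-1}\pi^{-1/2}\Gamma(\tfrac N2)\Gamma(\tfrac{N+1}2)$ gives
\[
\frac2N\ln|\mathbb S^N|=\frac2N\ln\frac{\Gamma(N)}{\Gamma(N/2)}-\ln(4\pi)+\ln 4+\ln\pi+\frac2N\ln\frac{2\pi^{N/2}\cdot\pi^{1/2}}{2^{N-1}\pi^{1/2}\cdot\pi^{N/2}}\cdots .
\]
I would verify this bookkeeping, together with $A_N=2\psi(\tfrac N2)$, carefully; the intended identity is $A_N+a_N=\frac2N\ln|\mathbb S^N|$.

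\textbf{Step 4: conclusion.} Since the reduction is an equivalence and Beckner's inequality holds \cite{beckner1997logarithmic}, the Euclidean inequality holds, and each inequality implies the other.

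The main obstacle is Step 1: differentiating the $t$-intertwining rigorously at $t=0$, including exchanging the derivative with the integrals, and handling the density and decay issues near the pole. If that is troublesome, I would instead compute directly. In that approach, the Euclidean form is $\frac{c_N}{2}\iint\frac{(v(x)-v(y))^2}{|x-y|^N}\mathbf 1_{|x-y|<1}\,dx\,dy$ plus lower-order terms; I would substitute the chordal-distance identity and expand $(v(x)-v(y))^2$ with the weights $\phi^{N/2}$.
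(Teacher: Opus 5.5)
Your Step 3 has a sign error, and as written it does not close. With $\|u\|_2=1$, multiply Beckner's inequality by $c_N/2$ and use $c_NC_N/2=2/N$. This gives
\[
\frac{c_N}{2}\iint_{\mathbb S^N\times\mathbb S^N}\frac{(u(z)-u(\zeta))^2}{|z-\zeta|^N}\,dV_g(z)\,dV_g(\zeta)-\frac{2}{N}\ln|\mathbb S^N|\ \ge\ \frac{2}{N}\int_{\mathbb S^N}|u|^2\ln|u|^2\,dV_g .
\]
Comparing this with your transferred inequality, what you need is $A_N+a_N=-\frac{2}{N}\ln|\mathbb S^N|$.

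The identity you call the intended one, $A_N+a_N=+\frac2N\ln|\mathbb S^N|$, is false. For $N=2$ the left side is $-\ln(4\pi)$ and the right side is $\ln(4\pi)$. This is why your bookkeeping line cannot be completed.

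The correct identity does hold. You have $A_N+a_N=\frac{2}{N}\ln\frac{\Gamma(N)}{\Gamma(N/2)}-\ln(4\pi)$, and the duplication formula $\Gamma(N)=2^{N-1}\pi^{-1/2}\Gamma(\frac N2)\Gamma(\frac{N+1}{2})$ gives
\[
\frac{2}{N}\ln\frac{\Gamma(N)}{\Gamma(\frac N2)}-\ln(4\pi)
=\frac{2}{N}\ln\frac{2^{N-1}\Gamma(\frac{N+1}{2})}{\pi^{1/2}(4\pi)^{N/2}}
=\frac{2}{N}\ln\frac{\Gamma(\frac{N+1}{2})}{2\pi^{\frac{N+1}{2}}}
=-\frac{2}{N}\ln|\mathbb S^N| .
\]
This is exactly why the bracketed constant in \eqref{eq:logS-with-area} vanishes once $\|u\|_2=1$.

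With this correction, your Steps 1--2 are fine and agree with Lemma~\ref{confcore}. Your form identity is the paired version of the pointwise intertwining $\mathcal T_0[\mathscr P_g^{\ln}u]=(-\Delta)^{\ln}v-2(\ln\phi)v$, which the paper simply quotes from the literature. The cancellation of $2\int\ln\phi\,|v|^2$ is the same as in the paper.

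You also do not need a density argument for the direction you actually use (Beckner $\Rightarrow$ Euclidean on $C_c^\infty(\mathbb R^N)$). Here $\mathcal T_0^{-1}v$ vanishes near the projection pole, so it already lies in $C^\infty(\mathbb S^N)$. Density is only needed for the converse.

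Beyond this fix, your route differs from the paper's in where the inequality comes from. You import Beckner's theorem and prove only the equivalence. The paper instead proves the Euclidean inequality directly. For fixed $v\in C_c^\infty(\mathbb R^N)$, the sharp fractional Sobolev inequality gives $F_v(s)=\kappa_{N,s}\|v\|_{\dot H^s}^2-\|v\|_{L^{2_s^*}}^2\ge 0$, and $F_v(0)=0$, so $F_v'(0^+)\ge 0$. Expanding $\kappa_{N,s}=1+a_Ns+o(s)$, $\|v\|_{\dot H^s}^2$ and $\|v\|_{L^{2_s^*}}^2$ to first order yields exactly the Euclidean inequality. The transfer to $\mathbb S^N$ then re-derives Beckner's inequality.

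Your argument (with the sign corrected) is logically sufficient for the statement, but its ``holds'' part rests entirely on Beckner's theorem. The paper's version is self-contained modulo the sharp fractional Sobolev inequality, and it explains why $a_N=\frac{d}{ds}\kappa_{N,s}|_{s=0}$ is the constant that appears.
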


By the sharp fractional Sobolev inequality on \(\mathbb R^N\), we have, for every \(v\in C_c^{\infty}(\mathbb{R}^N)\),
\[
F_v(s):=\kappa_{N,s}\,\|v\|_{\dot H^s(\mathbb R^N)}^2-\|v\|_{L^{2_s^*}(\mathbb R^N)}^2\ge 0,
\qquad s\in[0,1),
\]
and moreover \(F_v(0)=0\); see Section \ref{sharplog} for the relevant notation and further details. It follows that
\[
F_v'(0^+)\ge 0.
\]
As we shall see, this endpoint differential inequality is in fact equivalent to the sharp Euclidean logarithmic Sobolev inequality. In other words, the logarithmic Sobolev inequality arises naturally as the \(s\to0^+\) first-order differential form of the sharp fractional Sobolev inequality.
This naturally leads to the  question: do analogous inequalities hold for the fractional--logarithmic operator in the Euclidean setting, or for the conformal fractional--logarithmic operator on \(\mathbb S^N\)?

We first address this question for the extremals of the sharp fractional Sobolev inequality. Namely, we prove that the corresponding sharp fractional--logarithmic Sobolev-type identity is satisfied by fractional Sobolev extremals. This is formulated in the following proposition.

\begin{proposition}
\label{prop:fraclog-identity-variational}
Let $N>2s,$ the fractional extremal $u_s$ with $s\in (0,1)$ satisfies the sharp fractional--logarithmic identity
\begin{equation}\label{eq:sharp-fraclog-identity-expanded}
\frac{2}{N}\,\Ent_{p(s)}(u_s)
=
\kappa'_{N,s}\,\frac{\int_{\mathbb R^N}|\xi|^{2s}\,|\widehat u_s(\xi)|^2\,d\xi}{\|u_s\|_{L^{p(s)}(\mathbb R^N)}^2}
+\kappa_{N,s}\,
\frac{\int_{\mathbb R^N}|\xi|^{2s}\ln|\xi|^2\,|\widehat u_s(\xi)|^2\,d\xi}{\|u_s\|_{L^{p(s)}}^2}.
\end{equation}
 where \(u_s\) and \(\kappa_{N,s}\) are defined in \eqref{extremal} and \eqref{eq:kappaNs}, respectively, and
\[
\Ent_{p(s)}(v)
:=\int_{\mathbb R^N}\frac{|v|^{p(s)}}{\|v\|_{L^{p(s)}}^{p(s)}}
\ln\!\left(\frac{|v|^{p(s)}}{\|v\|_{L^{p(s)}}^{p(s)}}\right)\,dx,
\qquad
p(s):=\frac{2N}{N-2s},
\]
denotes the entropy functional associated with the exponent \(p(s)\).
\end{proposition}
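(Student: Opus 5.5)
The identity should be the first-order (in the order parameter) differential form of the fact that the extremal $u_s$ attains equality in the sharp fractional Sobolev inequality. We will use the family of extremals $u_t$, $t$ near $s$, and differentiate the equality $\kappa_{N,t}\|u_t\|_{\dot H^t}^2=\|u_t\|_{L^{p(t)}}^2$ at $t=s$. The dependence of $u_t$ on $t$ is an obstacle, because the derivative would pick up terms involving $\partial_t u_t$. We will therefore avoid differentiating $u_t$ and freeze the function: $w:=u_s$ is fixed and
\[
F_w(t):=\kappa_{N,t}\int_{\mathbb R^N}|\xi|^{2t}|\widehat w(\xi)|^2\,d\xi-\|w\|_{L^{p(t)}}^2 ,\qquad t\in(0,1).
\]
By the sharp fractional Sobolev inequality, $F_w(t)\ge0$ for all $t$, and $F_w(s)=0$ because $w$ is an extremal at order $s$. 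Hence $t=s$ is an interior minimum, and $F_w'(s)=0$ provided $F_w$ is differentiable at $s$. This is the same mechanism as $F_v'(0^+)\ge0$ described before the proposition. Because $s$ is interior, we get an equality rather than an inequality.

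Next we compute $F_w'(s)$. The first term gives
\[
\kappa'_{N,s}\int|\xi|^{2s}|\widehat w|^2\,d\xi+\kappa_{N,s}\int|\xi|^{2s}\ln|\xi|^2\,|\widehat w|^2\,d\xi .
\]
For the second term, set $G(p)=\|w\|_p^2=\exp\bigl(\tfrac2p\ln\int|w|^p\bigr)$. The standard computation gives
\[
\frac{d}{dp}\|w\|_p^2=\frac{2}{p^2}\|w\|_p^2\,\Ent_p(w).
\]
Also $p'(s)=\frac{4N}{(N-2s)^2}=\frac{p(s)^2}{N}$. The chain rule then yields
\[
\frac{d}{dt}\|w\|_{L^{p(t)}}^2\Big|_{t=s}=\frac{2}{N}\|w\|_{L^{p(s)}}^2\,\Ent_{p(s)}(w).
\]
Setting $F_w'(s)=0$ and dividing by $\|u_s\|_{L^{p(s)}}^2$ gives exactly \eqref{eq:sharp-fraclog-identity-expanded}.

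The main technical step is justifying differentiation under the integral sign at $t=s$. For the Fourier side, $u_s$ is the explicit bubble from \eqref{extremal}, and $\widehat{u_s}$ is (up to constants) a Bessel-type function $|\xi|^{-s}K_s(|\xi|)$. It decays exponentially and behaves like $|\xi|^{-2s}$ near $0$ (for $s<N/2$). So $|\xi|^{2t}(1+|\ln|\xi||)|\widehat{u_s}|^2$ is dominated by an integrable function uniformly for $t$ in a small neighbourhood of $s$. Near the origin this uses $2t-4s>-N$, which holds since $N>2s$ and $t$ is close to $s$. For the $L^p$ side, $u_s$ is bounded and decays like $|x|^{-(N-2s)}$. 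Then $|u_s|^{p}(1+|\ln u_s|)$ is integrable uniformly for $p$ near $p(s)$, because $|u_s|^{p}\sim|x|^{-(N-2s)p}$ with $(N-2s)p(s)=2N>N$. Dominated convergence therefore applies on both sides, and $F_w$ is differentiable at $s$ with the formula above. We also use that $\kappa_{N,t}$ is smooth in $t$, as an explicit ratio of Gamma functions. The remaining point to check is that the sharp inequality with constant $\kappa_{N,t}$ is valid for every $t\in(0,1)$ with $N>2t$ (Lieb's theorem), so that $F_w\ge0$ holds on a two-sided neighbourhood of $s$.
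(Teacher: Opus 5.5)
Your argument is correct, but it does not follow the paper's formal proof. It is exactly the argument the paper records afterwards in Remark~\ref{rem:F-derivative-normalized}.

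The paper's proof of the proposition works along the extremal family instead. It writes $\mathcal E_s(u_s)=A(s)/B(s)=1/\kappa_{N,s}$ and differentiates in $s$. It then has to eliminate the term $2\int|\xi|^{2s}\widehat u_s\,\partial_s\widehat u_s\,d\xi$, which comes from the $s$-dependence of $u_s$. It does this by testing the Euler--Lagrange equation $(-\Delta)^s u_s=\Lambda_s u_s^{p(s)-1}$ against $\partial_s u_s$. This is combined with the special fact that $|u_s|^{p(s)}=(1+|x|^2)^{-N}$ does not depend on $s$, so $I'(s)=0$ turns that term into the entropy.

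Your frozen-function version is the other face of the same envelope-theorem mechanism. Because you never move the function, no $\partial_t u_t$ term appears. You therefore need neither the Euler--Lagrange equation nor the $s$-independence of $|u_s|^{p(s)}$. You only use two facts: the inequality with constant $\kappa_{N,t}$ holds for $t$ in a two-sided neighbourhood of $s$, and $u_s$ is extremal at the single order $s$.

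Your differentiability step is also fully justified by dominated convergence for a fixed function. Your integrability checks are right: near $\xi=0$ you use $N+2t-4s>0$, and as $|x|\to\infty$ you use $(N-2s)p(t)>N$. The paper's route, by contrast, requires differentiating $s\mapsto\widehat{u_s}$ under the integral sign, a verification it explicitly omits.

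What the paper's computation adds is an explicit account of how the variation of the minimizer cancels along the curve $s\mapsto u_s$. For the identity itself, your argument is shorter and more complete.
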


By Remark \ref{examd}, in the limiting case \(s=0\), the above identity formally becomes
\begin{equation}\label{euclilog}
    \frac{2}{N}\,\Ent_2(u_0)
=
a_N+\frac{\int_{\mathbb R^N}\ln|\xi|^2\,|\widehat u_0(\xi)|^2\,d\xi}{\|u_0\|_2^2},\quad u_0:=(1+|x|^2)^{-\frac{N}{2}}.
\end{equation}
This is exactly the equality case of the sharp Euclidean logarithmic Sobolev inequality.

\begin{theorem}\label{thm:naive-fraclog-fails}
Let \(N>2s\) and \(0<s<1\).
The general sharp  fractional--logarithmic sobolev inequality fails:
\begin{equation}\label{eq:naive-fraclog-ineq}
\frac{2}{N}\,\Ent_{p(s)}(v)\le
\kappa'_{N,s}\,\frac{\int_{\mathbb R^N}|\xi|^{2s}\,|\widehat v(\xi)|^2\,d\xi}{\|v\|_{L^{p(s)}(\mathbb R^N)}^2}
+\kappa_{N,s}\,\frac{\int_{\mathbb R^N}|\xi|^{2s}\ln|\xi|^2\,|\widehat v(\xi)|^2\,d\xi}{\|v\|_{L^{p(s)}(\mathbb R^N)}^2}
\quad v\in \dot H^s(\mathbb R^N)\setminus\{0\}.
\end{equation}
Equivalently, the monotonicity property
\begin{equation}\label{eq:Fprime-nonneg-thm}
F_v'(s)\ge 0
\qquad\text{for all }s\in(0,1),\ \forall\, v\in \dot H^s(\mathbb R^N)\setminus\{0\},
\end{equation}
fails in general.
\end{theorem}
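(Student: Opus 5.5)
The plan is to argue by contradiction through the monotonicity formulation \eqref{eq:Fprime-nonneg-thm}. First I will verify the claimed equivalence. For fixed $v$, the map $s\mapsto F_v(s)=\kappa_{N,s}\int_{\mathbb R^N}|\xi|^{2s}|\widehat v(\xi)|^2\,d\xi-\|v\|_{L^{p(s)}}^2$ is differentiable on any interval where $v$ lies in $\dot H^{s}$ and $L^{p(s)}$ with some room. Differentiating under the integral sign gives the first term $\kappa'_{N,s}\int|\xi|^{2s}|\widehat v|^2+\kappa_{N,s}\int|\xi|^{2s}\ln|\xi|^2|\widehat v|^2$. For the second term I will use $p'(s)=4N/(N-2s)^2$ and
\[
\frac{d}{ds}\Bigl(\int|v|^{p(s)}\Bigr)^{2/p(s)}
=\|v\|_{p(s)}^2\Bigl[-\tfrac{2p'}{p^2}\ln\!\int|v|^{p}+\tfrac{2}{p}\,p'\,\tfrac{\int|v|^p\ln|v|}{\int|v|^p}\Bigr]
=\|v\|_{p(s)}^2\,\tfrac{p'}{p^2}\cdot 2\,\Ent_{p(s)}(v)\cdot\tfrac1{1}.
\]
Since $2p'/p^2=2/N$, this term equals $\frac{2}{N}\|v\|_{p(s)}^2\Ent_{p(s)}(v)$. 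Dividing by $\|v\|_{p(s)}^2$ then shows that $F_v'(s)\ge0$ is exactly \eqref{eq:naive-fraclog-ineq}. So it suffices to exhibit $v$ and $s$ with $F_v'(s)<0$.

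For the counterexample, fix $s_1\in(0,1)$ with $N>2s_1$, and take $v:=u_{s_1}=(1+|x|^2)^{-(N-2s_1)/2}$, the fractional Sobolev extremal \eqref{extremal}. By sharpness, $F_v(s_1)=0$. Now choose $s_0<s_1$ close to $s_1$; "close" means $N-4s_1+2s_0>0$, so that $v\in\dot H^{s}\cap L^{p(s)}$ for all $s\in[s_0,s_1]$. This holds because $v$ is smooth and decays like $|x|^{-(N-2s_1)}$, so $|v|^{p(s)}$ is integrable and $\widehat v$ has exponential decay with controlled behaviour at the origin. The sharp fractional Sobolev inequality gives $F_v(s_0)\ge0$. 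By the classification of its extremals (Lieb; Chen--Li--Ou \cite{CLO06}), equality holds at order $s_0$ only for functions $c\,(\lambda^2+|x-x_0|^2)^{-(N-2s_0)/2}$. The function $u_{s_1}$ has a different decay exponent, so it is not of this form, and hence $F_v(s_0)>0$.

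Consequently $F_v(s_1)-F_v(s_0)<0$. By the mean value theorem there is $s\in(s_0,s_1)$ with $F_v'(s)<0$, and by the equivalence above \eqref{eq:naive-fraclog-ineq} fails for this $v$ at this $s$. A short remark will reconcile this with Proposition \ref{prop:fraclog-identity-variational}: at $s_1$ itself one has $F_v'(s_1)=0$, since $s_1$ is an interior minimum of $F_v$. The failure therefore occurs strictly before $s_1$, not at the extremal's own order.

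The main obstacle I expect is justifying the differentiability of $F_v$ on $[s_0,s_1]$, that is, dominated-convergence bounds for $\partial_s$ of both the Fourier integral and the $L^{p(s)}$ norm. The low-frequency part needs $\int_{|\xi|<1}|\xi|^{2s}|\ln|\xi||\,|\widehat v|^2<\infty$. Because $\widehat v(\xi)\sim|\xi|^{-2s_1}$ near $0$, this requires $2s-4s_1+N>0$, which is again guaranteed by the choice of $s_0$. The large-$x$ part of $\int|v|^{p(s)}\ln|v|$ is handled by the same margin.
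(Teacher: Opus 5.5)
Your proposal is correct and follows essentially the paper's argument: fix a bubble $u_{s_1}$, use $F_{u_{s_1}}(s_1)=0$ together with the fact that $u_{s_1}$ is not a Sobolev extremal at lower orders, so $F_{u_{s_1}}$ cannot be nondecreasing below $s_1$. The paper phrases this by contradiction, deducing $F\equiv 0$ on $(0,s_1)$ and hence that $u_{s_1}$ would be an extremal at every lower order. You instead argue directly, using a strict inequality $F_{u_{s_1}}(s_0)>0$ at one order $s_0<s_1$ and the mean value theorem. Your version is slightly more careful than the paper's in three ways: the restriction $N-4s_1+2s_0>0$ guarantees $u_{s_1}\in \dot H^{s}\cap L^{p(s)}$ and the differentiability of $F_{u_{s_1}}$ on $[s_0,s_1]$; the decay-exponent argument makes explicit why $u_{s_1}$ is not an extremal at order $s_0$; and you derive the equivalence between $F_v'\ge 0$ and the inequality. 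The paper leaves all of these implicit.
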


We now transfer the sharp fractional--logarithmic identity for Euclidean extremals to the sphere via stereographic projection. As in the entropy itself, the conformal change of variables produces an additional logarithmic correction term involving the conformal factor. Define
\[
\Ent_{p(s)}^{\mathbb S^N}(v)
:=
\int_{\mathbb S^N}
\frac{|v|^{p(s)}}{\|v\|_{L^{p(s)}(\mathbb S^N)}^{p(s)}}
\ln\!\left(
\frac{|v|^{p(s)}}{\|v\|_{L^{p(s)}(\mathbb S^N)}^{p(s)}}
\right)\,dV_g,\quad p(s)=\frac{2N}{N-2s}.
\]

\begin{proposition}\label{prop:fraclog-identity-sphere-correct}
Let \(0<s<1\), and let \(U_s\) be a spherical extremal corresponding to the Euclidean bubble \(u_s\), namely $u_s=\mathcal T_sU_s,$
where \(u_s\) is defined in \eqref{extremal}. Then
\begin{align}
\frac{2}{N}\,\Ent_{p(s)}^{\mathbb S^N}(U_s)
&=
\kappa'_{N,s}\,
\frac{\langle U_s,\mathscr P_g^sU_s\rangle_{\mathbb S^N}}
{\|U_s\|_{L^{p(s)}(\mathbb S^N)}^2}
+\kappa_{N,s}\,
\frac{\langle U_s,\mathscr P^{s+\ln}_g U_s\rangle_{\mathbb S^N}}
{\|U_s\|_{L^{p(s)}(\mathbb S^N)}^2}
-2\int_{\mathbb S^N}
\frac{|U_s(\omega)|^{p(s)}}{\|U_s\|_{L^{p(s)}(\mathbb S^N)}^{p(s)}}
\ln\phi(\sigma(\omega))\,dV_g(\omega)
\notag\\
&\qquad
+\frac{2\kappa_{N,s}}{\|U_s\|_{L^{p(s)}(\mathbb S^N)}^2}
\int_{\mathbb S^N}
\ln\phi(\sigma(\omega))\,U_s(\omega)\,\mathscr P_g^s U_s(\omega)\,dV_g(\omega).
\label{eq:sharp-fraclog-identity-sphere-correct}
\end{align}
\end{proposition}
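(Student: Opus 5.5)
Starting from the Euclidean identity \eqref{eq:sharp-fraclog-identity-expanded} of Proposition~\ref{prop:fraclog-identity-variational} for $u_s=\mathcal T_s U_s$, I will rewrite each of its three ingredients (the entropy, the $\dot H^s$ term, and the log-weighted $\dot H^s$ term) as a spherical quantity. I use the standard facts about stereographic projection $\sigma$ and the conformal pullback $\mathcal T_s[U](x)=\phi(x)^{\frac{N-2s}{2}}U(\sigma^{-1}(x))$ (with the Jacobian $dV_g=\phi^N dx$ and $\phi(x)=2/(1+|x|^2)$, in the paper's normalisation). Equivalently, I read the $\ln\phi(\sigma(\omega))$ in the statement as the conformal factor evaluated at the point corresponding to $\omega$.

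\textbf{Norms and entropy.} Since $|\mathcal T_sU|^{p(s)}=\phi^N|U\circ\sigma^{-1}|^{p(s)}$, we get $\|u_s\|_{L^{p(s)}(\mathbb R^N)}=\|U_s\|_{L^{p(s)}(\mathbb S^N)}$. For the entropy, write $f:=|U_s|^{p(s)}/\|U_s\|^{p(s)}$. Then
\[
\frac{|u_s|^{p(s)}}{\|u_s\|^{p(s)}}\ln\frac{|u_s|^{p(s)}}{\|u_s\|^{p(s)}}\,dx=f\bigl(\ln f+N\ln\phi\bigr)\,dV_g,
\]
so that
\[
\Ent_{p(s)}(u_s)=\Ent^{\mathbb S^N}_{p(s)}(U_s)+N\int_{\mathbb S^N} f\ln\phi\,dV_g .
\]
Multiplying by $2/N$ produces exactly the term $-2\int f\ln\phi$ once it is moved to the right-hand side.

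\textbf{Quadratic form.} By Plancherel and the intertwining $\mathcal T_s[\mathscr P^s_gU]=\phi^{-2s}(-\Delta)^s\mathcal T_s[U]$, together with the Jacobian, we have
\[
\int|\xi|^{2s}|\widehat{u_s}|^2\,d\xi=\langle u_s,(-\Delta)^su_s\rangle=\langle U_s,\mathscr P^s_gU_s\rangle_{\mathbb S^N}.
\]
The weights combine as $\phi^{\frac{N-2s}{2}}\phi^{\frac{N+2s}{2}}\phi^{-N}=1$. This handles the $\kappa'_{N,s}$ term.

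\textbf{Log term.} The Fourier multiplier of $(-\Delta)^{s+\ln}$ is $|\xi|^{2s}\ln|\xi|^2$, obtained by differentiating $|\xi|^{2t}$ in $t$. Hence the log-weighted integral equals $\langle u_s,(-\Delta)^{s+\ln}u_s\rangle$. Now apply Proposition~\ref{prop:sphere-RN-frac-log}, multiply by $u_s=\mathcal T_s U_s$ and integrate; the same weight bookkeeping gives
\[
\langle U_s,\mathscr P^{s+\ln}_gU_s\rangle_{\mathbb S^N}=\langle u_s,(-\Delta)^{s+\ln}u_s\rangle-\langle u_s,(-\Delta)^s((\ln\phi)u_s)\rangle-\langle (\ln\phi)u_s,(-\Delta)^su_s\rangle .
\]
By self-adjointness of $(-\Delta)^s$, the last two terms are equal and together give $-2\langle(\ln\phi)u_s,(-\Delta)^su_s\rangle$. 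Transferring back with the intertwining, this becomes $-2\int_{\mathbb S^N}\ln\phi\,U_s\,\mathscr P^s_gU_s\,dV_g$. Solving for $\langle u_s,(-\Delta)^{s+\ln}u_s\rangle$ and inserting it into \eqref{eq:sharp-fraclog-identity-expanded} yields the final term with coefficient $2\kappa_{N,s}$. Collecting the three pieces gives \eqref{eq:sharp-fraclog-identity-sphere-correct}.

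\textbf{Main obstacle.} The delicate part is justifying the pairings, since $u_s$ is a bubble rather than a Schwartz function and $\ln\phi\sim-2\ln|x|$ grows at infinity. I would check integrability directly: $u_s\sim|x|^{-(N-2s)}$, so $(\ln\phi)u_s\in\dot H^s$. The self-adjointness step and Plancherel for $|\xi|^{2s}\ln|\xi|^2$ are then justified by the explicit Fourier decay of the bubble, or alternatively by cut-off approximation. On the sphere everything is smooth ($U_s$ is constant up to conformal motion), so no issue arises there.
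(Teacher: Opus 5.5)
Your proposal is correct and follows essentially the same route as the paper. You transfer the $L^{p(s)}$ norm and the entropy, which picks up $N\int_{\mathbb S^N} f\ln\phi\,dV_g$. You identify the $\dot H^s$ energy with $\langle U_s,\mathscr P^s_gU_s\rangle_{\mathbb S^N}$. You then use the integrated form of Proposition~\ref{prop:sphere-RN-frac-log} to obtain $\langle u_s,(-\Delta)^{s+\ln}u_s\rangle=\langle U_s,\mathscr P^{s+\ln}_gU_s\rangle_{\mathbb S^N}+2\int_{\mathbb S^N}\ln\phi\,U_s\,\mathscr P^s_gU_s\,dV_g$, and substitute everything into \eqref{eq:sharp-fraclog-identity-expanded}.

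Your use of self-adjointness to move $(-\Delta)^s$ onto $u_s$ is a slightly cleaner justification of the step the paper calls the ``weak form of the intertwining relation,'' since the intertwining is then applied only to the smooth (in fact constant) $U_s$. Your integrability remarks also supply checks that the paper omits.
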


At \(s=0\), since
\[
p(0)=2,\qquad \kappa_{N,0}=1,\qquad \kappa'_{N,0}=a_N,
\]
identity \eqref{eq:sharp-fraclog-identity-sphere-correct} reduces to
\[
\frac{2}{N}\,\Ent_{2}^{\mathbb S^N}(U_0)
=
a_N+
\frac{\langle U_0,\mathscr P_g^{\ln}U_0\rangle_{\mathbb S^N}}
{\|U_0\|_{L^{2}(\mathbb S^N)}^2},
\]
since the two \(\ln\phi\)-correction terms cancel exactly. This is precisely the equality case of the sharp logarithmic Sobolev inequality on \(\mathbb S^N\), see the proof of Proposition \ref{prop:logS-beckner-equivalence}:
\[\frac{2}{N}\int_{\mathbb{S}^N}\frac{|u|^2}{\|u\|_2^2}
\ln\!\left(\frac{|u|^2}{\|u\|_2^2}\right)\,dV_g
\le a_N+\frac{\int_{\mathbb{S}^N}u\,\mathscr P_g^{\ln}u\,dV_g}{\|u\|_2^2}
\quad{\rm for}\ \, u\in C^\infty(\mathbb{S}^N)\setminus\{0\}.\]

A natural question, in view of the failure discussed above, is how to derive new sharp inequalities governed by the fractional--logarithmic energy
\[
\langle u,(-\Delta)^{s+\ln}u\rangle.
\]
To address this problem, we develop three different approaches based on Beckner's logarithmic uncertainty principle, entropy-type identities, and Jensen's inequality. These lead to three corresponding sharp fractional--logarithmic Sobolev inequalities.

\begin{proposition}\label{prop:fraclog-beckner}
Let \(0<s<1\), and let \(u\in \mathcal S(\mathbb R^N)\) satisfy
\[
\|(-\Delta)^{s/2}u\|_{L^2(\mathbb R^N)}^2
=
\int_{\mathbb R^N}|\xi|^{2s}\,|\widehat u(\xi)|^2\,d\xi
=1.
\]
Then
\begin{equation}\label{eq:fraclog-ineq-normalized}
\frac{N}{4}\,\langle u,(-\Delta)^{s+\ln}u\rangle
\ge
\int_{\mathbb R^N}\big|(-\Delta)^{s/2}u(x)\big|^2
\ln\big|(-\Delta)^{s/2}u(x)\big|\,dx
+
B_N,
\end{equation}
where
\[
B_N
:=
\frac{N}{2}\psi\!\Big(\frac{N}{2}\Big)
-\frac{N}{4}\ln\pi
-\frac12\ln\!\Big(\frac{\Gamma(N)}{\Gamma(\frac N2)}\Big)
+\frac{N}{2}\ln(2\pi).
\]
Moreover, equality holds in \eqref{eq:fraclog-ineq-normalized} provided that $(-\Delta)^{s/2}u$
is an extremal for Beckner's logarithmic uncertainty principle, namely, up to conformal automorphisms,
\[
(-\Delta)^{s/2}u(x)=\left(\pi^{N/2}\frac{\Gamma(N/2)}{\Gamma(N)}\right)^{-1/2}(1+|x|^2)^{-N/2}.
\]

\end{proposition}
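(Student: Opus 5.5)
The plan is to reduce \eqref{eq:fraclog-ineq-normalized} to the sharp Euclidean logarithmic Sobolev inequality of Proposition~\ref{prop:logS-beckner-equivalence}, which is equivalent to Beckner's logarithmic uncertainty principle. We apply it to the auxiliary function
\[
w:=(-\Delta)^{s/2}u,\qquad \widehat w(\xi)=|\xi|^{s}\,\widehat u(\xi).
\]
The normalization in the statement says exactly that $\|w\|_{L^2(\mathbb R^N)}=1$ by Plancherel. So $\Ent_2(w)=\int_{\mathbb R^N}|w|^2\ln|w|^2\,dx=2\int_{\mathbb R^N}|w|^2\ln|w|\,dx$.

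\textbf{Step 1: Fourier form of the fractional--logarithmic energy.} Since $(-\Delta)^{s+\ln}$ is the $t$-derivative at $t=s$ of $(-\Delta)^t$, whose symbol is $|\xi|^{2t}$, its symbol is $|\xi|^{2s}\ln|\xi|^2$. Hence for $u\in\mathcal S(\mathbb R^N)$,
\[
\langle u,(-\Delta)^{s+\ln}u\rangle=\int_{\mathbb R^N}|\xi|^{2s}\ln|\xi|^2\,|\widehat u(\xi)|^2\,d\xi=\int_{\mathbb R^N}\ln|\xi|^2\,|\widehat w(\xi)|^2\,d\xi=\langle w,(-\Delta)^{\ln}w\rangle .
\]
To justify this, I would differentiate $\int|\xi|^{2t}|\widehat u|^2\,d\xi$ under the integral sign, using dominated convergence with the bound $|\xi|^{2t}|\ln|\xi||\,|\widehat u|^2\in L^1$ for $t$ near $s$. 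The identity $\langle w,(-\Delta)^{\ln}w\rangle=\int\ln|\xi|^2|\widehat w|^2$ is the standard Fourier representation of the logarithmic Laplacian from \cite{CW19}.

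\textbf{Step 2: apply the log-Sobolev inequality.} Proposition~\ref{prop:logS-beckner-equivalence} applied to $w$ with $\|w\|_2=1$ gives
\[
\frac4N\int_{\mathbb R^N}|w|^2\ln|w|\,dx\le a_N+\langle u,(-\Delta)^{s+\ln}u\rangle .
\]
Multiplying by $N/4$ yields \eqref{eq:fraclog-ineq-normalized} with constant $-\tfrac N4 a_N$. This constant equals $B_N$, because
\[
-\tfrac N4 a_N=-\tfrac12\ln\tfrac{\Gamma(N)}{\Gamma(N/2)}+\tfrac N4\ln(4\pi)+\tfrac N2\psi\big(\tfrac N2\big)
\quad\text{and}\quad
\tfrac N4\ln(4\pi)=-\tfrac N4\ln\pi+\tfrac N2\ln(2\pi).
\]

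\textbf{Step 3: equality.} Equality in the logarithmic Sobolev inequality holds for $w=c(1+|x|^2)^{-N/2}$ and its images under conformal automorphisms; this is the equality case \eqref{euclilog}. With $\int_{\mathbb R^N}(1+|x|^2)^{-N}\,dx=\pi^{N/2}\Gamma(N/2)/\Gamma(N)$, the normalization $\|w\|_2=1$ forces the stated constant. Hence equality holds in \eqref{eq:fraclog-ineq-normalized} whenever $(-\Delta)^{s/2}u$ has this form.

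\textbf{Main obstacle.} Proposition~\ref{prop:logS-beckner-equivalence} is stated for $C_c^\infty$ functions, while $w=(-\Delta)^{s/2}u$ is generally neither compactly supported nor Schwartz. It is only smooth, in $L^2$, and decays like $|x|^{-N-s}$. I would therefore first extend the log-Sobolev inequality to the class
\[
\Big\{w\in L^2(\mathbb R^N):\ \int_{\mathbb R^N}\big|\ln|\xi|\big|\,|\widehat w|^2\,d\xi<\infty\Big\}.
\]
The approximation is by truncation and mollification, $w_n=\chi(\cdot/n)\,(w*\rho_{1/n})$. We pass to the limit on the energy side using $\widehat w_n\to\widehat w$ with the weight $|\ln|\xi||$. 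On the entropy side we use Fatou's lemma on the positive part of $|w|^2\ln|w|$, together with $L^2$ convergence and the decay of $w$ to control the negative part. Alternatively, I would invoke Beckner's logarithmic uncertainty principle directly, since it is known on this larger class; then the entropy bound follows by the same duality argument used in Proposition~\ref{prop:logS-beckner-equivalence}.
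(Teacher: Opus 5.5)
Your argument is correct, but it differs from the paper's proof in two respects.

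\textbf{Which inequality is invoked.} The paper applies Beckner's logarithmic uncertainty principle (Theorem~\ref{beckner}) to normalized Schwartz approximants. You apply Proposition~\ref{prop:logS-beckner-equivalence} instead. Your check $-\tfrac N4a_N=B_N$ shows that, for $\|w\|_2=1$, the two are literally the same inequality. Your version therefore rests only on the sharp fractional Sobolev inequality from which that proposition is derived, and Beckner's theorem is needed only for the extremals. Your fallback of invoking Beckner ``on the larger class'' is not free: Theorem~\ref{beckner} is stated on $\mathcal S(\mathbb R^N)$, and $\widehat w=|\xi|^s\widehat u$ is not smooth at the origin.

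\textbf{The approximation.} The paper stays in $\mathcal S(\mathbb R^N)$ by regularizing the multiplier, $\widehat f_\varepsilon=(|\xi|^2+\varepsilon)^{s/2}\widehat u$. It then has to prove Lemma~\ref{lem:entropy-convergence-feps}, using uniform $L^\infty$ bounds, a uniform moment bound $\sup_\varepsilon\int|x|^{2\alpha}|f_\varepsilon|^2\,dx<\infty$ for $\alpha<s/2$ (from a Fourier-side Gagliardo estimate), and Vitali's theorem. You truncate in physical space and rely instead on the pointwise decay $|w(x)|\lesssim(1+|x|)^{-N-s}$. You should prove or cite this decay; it follows from the singular-integral representation of $(-\Delta)^{s/2}u$.

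With that input, the limit passage is easier than you indicate:
\begin{itemize}
\item $w$ is already $C^\infty$, because $|\xi|^{k+s}\widehat u\in L^1$ for all $k$. So $w_n=\chi(\cdot/n)w\in C_c^\infty$ needs no mollification, and $|w_n|\le|w|$.
\item The entropy integrands are dominated by $C_\delta(|w|^{2-\delta}+|w|^{2+\delta})\in L^1$ for $\delta\in(0,1)$. Hence the entropies converge by dominated convergence, and Fatou is not needed.
\item On the energy side, $w\in L^1$ gives $\widehat{w_n}\to\widehat w$ uniformly, which handles the integrable singularity of $\ln|\xi|$ at $0$. The convergence $w_n\to w$ in $H^1$ handles its growth at infinity.
\end{itemize}
So your route is shorter, at the price of a pointwise decay estimate that the paper's moment bound avoids.

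\textbf{Equality case.} For Step~3, cite Theorem~\ref{beckner} for the extremals rather than \eqref{euclilog}, which the paper obtains only formally (Remark~\ref{examd}). Your own decay bound also gives $(-\Delta)^{s/2}u\in L^1(\mathbb R^N)$ for every $u\in\mathcal S(\mathbb R^N)$. By contrast, every Beckner extremal $A(\lambda^2+|x-x_0|^2)^{-N/2}$ with $A\neq0$ is comparable to $|x|^{-N}$ at infinity and is not integrable. Hence no Schwartz $u$ attains equality, and the equality clause holds only vacuously, in your proof and in the paper's alike.
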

\begin{proposition}
\label{prop:fraclog_moment}
For every $u\in\mathcal S(\R^N)$ such that
\[\|(-\Delta)^{s/2}u\|_2^2=1,\]
one has the strict inequality
\begin{equation}\label{eq:fraclog-moment-normalized}
\langle u,(-\Delta)^{s+\ln}u\rangle
\ >\
-\ln\Bigg(\frac{2\pi e}{N}\int_{\R^N}|x|^2\big|(-\Delta)^{s/2}u(x)\big|^2\,dx\Bigg)
+\frac{4}{N}B_N.
\end{equation}
\end{proposition}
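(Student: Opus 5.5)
The plan is to chain the inequality of Proposition~\ref{prop:fraclog-beckner} with the classical maximal-entropy property of Gaussians, and to read off strictness from the incompatibility of the two equality cases. Set $w:=(-\Delta)^{s/2}u$ and $f:=|w|^2$. By the normalization, $f$ is a probability density on $\mathbb R^N$. Since $\int f\ln f=2\int |w|^2\ln|w|$, Proposition~\ref{prop:fraclog-beckner} gives
\[
\langle u,(-\Delta)^{s+\ln}u\rangle\ \ge\ \frac{2}{N}\int_{\mathbb R^N} f\ln f\,dx+\frac4N B_N .
\]
It therefore suffices to show
\[
\int_{\mathbb R^N} f\ln f\,dx\ \ge\ -\frac N2\ln\Big(\frac{2\pi e}{N}\,m\Big),\qquad m:=\int_{\mathbb R^N}|x|^2 f\,dx ,
\]
and to show that equality cannot hold simultaneously in both inequalities.

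If $m=+\infty$, the right-hand side of \eqref{eq:fraclog-moment-normalized} equals $-\infty$. The left-hand side is finite because $u\in\mathcal S(\mathbb R^N)$, so the inequality is strict and there is nothing to prove. Otherwise $0<m<\infty$, and I introduce the Gaussian $\gamma(x):=(2\pi\sigma^2)^{-N/2}e^{-|x|^2/(2\sigma^2)}$ with $\sigma^2:=m/N$. Jensen's inequality, in the form of nonnegativity of the relative entropy, gives $\int f\ln(f/\gamma)\ge 0$. Expanding $\ln\gamma$ and using $\int f=1$ and $\int|x|^2f=N\sigma^2$, this yields
\[
\int f\ln f\ \ge\ \int f\ln\gamma\ =\ -\frac N2\ln(2\pi\sigma^2)-\frac N2\ =\ -\frac N2\ln\Big(\frac{2\pi e}{N}m\Big).
\]
Equality holds if and only if $f=\gamma$ a.e. Before using this I will check that $f\ln f$ is integrable, which is fine: $w$ is smooth, and the finite second moment controls the negative part of $f\ln f$ at infinity.

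Strictness is the main point. Suppose equality held in \eqref{eq:fraclog-moment-normalized}. Then equality would hold in both steps. The Gaussian step forces $|w|^2$ to be a centered Gaussian. The Beckner step forces $w$ to be, up to conformal automorphisms, the extremal $c\,(1+|x|^2)^{-N/2}$. For this I need the \emph{characterization} of equality in Beckner's logarithmic uncertainty principle, i.e.\ that these profiles are the only extremals. Proposition~\ref{prop:fraclog-beckner} records only the "provided that" direction, so I will invoke the known converse from \cite{beckner1997logarithmic,frank2020classification} and carry it through the proof of Proposition~\ref{prop:fraclog-beckner}. I expect this to be the only delicate step.

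With the characterization in hand, the contradiction is immediate. The conformal images of the bubble are of the form $c\,\lambda^{N/2}(1+\lambda^2|x-x_0|^2)^{-N/2}$, which decay only polynomially and are therefore never Gaussian. In fact their squares have infinite second moment when $N\le 2$, and when $N\ge 3$ they are still not Gaussian because of the polynomial tail. Hence equality cannot hold, and the inequality \eqref{eq:fraclog-moment-normalized} is strict.
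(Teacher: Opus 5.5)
Your proposal is correct and follows the paper's proof. Both chain Proposition~\ref{prop:fraclog-beckner} with the Gaussian maximal-entropy bound and get strictness because the polynomially decaying bubbles are never Gaussian; your additions are minor refinements. These are the relative-entropy comparison with an isotropic centered Gaussian, the separate treatment of the case $\int|x|^2|(-\Delta)^{s/2}u|^2\,dx=\infty$, and the explicit appeal to the converse equality characterization in Beckner's inequality, which the paper uses without comment even though Proposition~\ref{prop:fraclog-beckner} only records the ``provided that'' direction.

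One precision: apply that converse directly to $f=(-\Delta)^{s/2}u$, since the inequality of Proposition~\ref{prop:fraclog-beckner} is exactly Beckner's inequality for this $L^2$-normalized $f$. Do not try to pass the equality case through the $\varepsilon$-regularization, because equality in the limit gives no information about the approximants $g_\varepsilon$.
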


\begin{proposition}\label{prop:fraclog-Lq-bound}
Let \(1\le q<2\). For every \(u\in \mathcal S(\mathbb R^N)\) satisfying
\[\|(-\Delta)^{s/2}u\|_{L^2(\mathbb R^N)}^2
=1,\]
one has
\begin{equation}\label{eq:fraclog-Lq-normalized}
\frac{N}{4}\,\langle u,(-\Delta)^{s+\ln}u\rangle
>
\frac{1}{q-2}\ln\Big\|(-\Delta)^{s/2}u\Big\|_{L^q(\mathbb R^N)}^{q}
+ B_N.
\end{equation}
\end{proposition}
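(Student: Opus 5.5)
Write $w:=(-\Delta)^{s/2}u$, so $\|w\|_2=1$. The plan is to reduce the statement to Proposition~\ref{prop:fraclog-beckner} and then bound the entropy-type term by an $L^q$ quantity through Jensen's inequality.

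\emph{Step 1: rewrite the energy.} On the Fourier side,
\[
\langle u,(-\Delta)^{s+\ln}u\rangle=\int|\xi|^{2s}\ln|\xi|^2\,|\widehat u|^2\,d\xi=\int\ln|\xi|^2\,|\widehat w|^2\,d\xi,
\]
since the symbol of $(-\Delta)^{s+\ln}$ is the $t$-derivative of $|\xi|^{2t}$ at $t=s$. Proposition~\ref{prop:fraclog-beckner} then gives
\[
\frac N4\langle u,(-\Delta)^{s+\ln}u\rangle\ \ge\ \int|w|^2\ln|w|\,dx+B_N .
\]

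\emph{Step 2: Jensen.} Regard $d\mu=|w|^2\,dx$ as a probability measure and apply Jensen's inequality to the concave function $\ln$ and the function $|w|^{q-2}$. This gives
\[
\int|w|^2\ln|w|^{q-2}\,dx\ \le\ \ln\int|w|^{q-2}|w|^2\,dx=\ln\|w\|_q^q .
\]
Since $q-2<0$, dividing by $q-2$ reverses the inequality:
\[
\int|w|^2\ln|w|\,dx\ \ge\ \frac1{q-2}\ln\|w\|_q^q .
\]
For $1\le q<2$ and $w$ the fractional Laplacian of a Schwartz function, $\|w\|_q$ could be infinite. In that case the right-hand side is $-\infty$ and the claim is trivial, so assume $\|w\|_q<\infty$. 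The integral $\int_{\{w\neq 0\}}|w|^q$ is finite and positive, so Jensen applies on $\{w\neq0\}$, the set where $\mu$ lives.

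Combining Steps 1 and 2 yields the non-strict version of \eqref{eq:fraclog-Lq-normalized}.

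\emph{Step 3: strictness (the main obstacle).} Equality in Jensen requires $|w|^{q-2}$ to be $\mu$-a.e.\ constant, i.e.\ $|w|$ constant on its support. I would rule this out directly. A Schwartz function has $\widehat w=|\xi|^s\widehat u$ continuous, so $w$ is bounded, and $w\in L^2$. If $|w|=c$ on $\{w\ne0\}$, then this support has finite measure $1/c^2$. Both facts must then be shown incompatible with the structure of $w$:
\begin{itemize}
\item if the support has measure zero, then $w=0$, contradicting $\|w\|_2=1$;
\item $w$ is continuous, since $\widehat w\in L^1$ by the decay of $\widehat u$, so $|w|$ is continuous;
\item a continuous function taking only the values $0$ and $c$ on $\mathbb R^N$ cannot vanish anywhere, since $\mathbb R^N$ is connected; then $|w|\equiv c$ everywhere, contradicting $w\in L^2$.
\end{itemize}
Hence Jensen's inequality is strict. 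The final inequality is strict even though equality may hold in Proposition~\ref{prop:fraclog-beckner}, because the Jensen step alone already loses something. The only delicate point left is confirming $\widehat w\in L^1$, which follows from $|\xi|^s\widehat u\in L^1$ since $\widehat u$ is Schwartz.
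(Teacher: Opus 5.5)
Your proposal is correct and follows the paper's proof essentially step for step: apply Proposition~\ref{prop:fraclog-beckner}, then apply Jensen's inequality for the concave $\ln$ against the probability measure $|w|^2\,dx$, with the sign flip coming from $q-2<0$, and finally get strictness from the equality case of Jensen. Your strictness argument (continuity of $w$ from $\widehat w=|\xi|^s\widehat u\in L^1$, together with connectedness of $\mathbb R^N$ and $w\in L^2$) is more careful than the paper's, which relies on the claim that $(-\Delta)^{s/2}u$ is a Schwartz function; that claim is false in general, since the symbol $|\xi|^s$ is not smooth at the origin.
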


 The remainder of this paper is organized as follows. Section 2 establishes the foundational properties of the conformal fractional logarithmic Laplacian, including its definition via derivatives of fractional operators, continuity and regularity results, and an explicit characterization of its eigenvalues. Section 3 addresses conformal covariance and Yamabe-type problems: we prove their equivalence and establish the existence of positive solutions. Finally, Section 4 is devoted to applications to sharp Sobolev inequalities, where we revisit the sharp logarithmic Sobolev inequality, show the failure of a naive fractional--logarithmic analogue, and derive new sharp fractional--logarithmic inequalities.
    
  \smallskip 
	
	%====================================================

	\section{The Conformal Fractional-Logarithmic Laplacian}
    
 In this section we introduce the conformal fractional--logarithmic Laplacian on the round sphere, establish its kernel
representation as well as its spectral characterization in terms of spherical harmonics.

\subsection{Stereographic Projection}\label{stere}

We regard $\mathbb{S}^N$ as the unit sphere in $\mathbb{R}^{N+1}$ and
denote by $-e_{N+1}=(0,\dots,0,-1)$ the south pole. The stereographic
projection with respect to the south pole
\[
  \sigma:\mathbb{S}^N\setminus\{-e_{N+1}\}\to \mathbb{R}^N
\]
is given by
\[  z=(z',z_{N+1})\in\mathbb{S}^N\setminus\{-e_{N+1}\}
  \quad\longmapsto\quad
  x=\sigma(z):=\frac{z'}{1+z_{N+1}}\in \mathbb{R}^N,\]
where $z'=(z_1,\dots,z_N)\in \mathbb{R}^N$.
Its inverse is
\[ x\in \mathbb{R}^N
  \quad\longmapsto\quad
  z=\sigma^{-1}(x)
  =\Bigl(
      \frac{2x}{1+|x|^2},
      \frac{1-|x|^2}{1+|x|^2}
    \Bigr)\in\mathbb{S}^N\setminus\{-e_{N+1}\}.\]
It is well known that $\sigma$ is a conformal diffeomorphism. In stereographic coordinates, the pullback of the round metric on $\mathbb{S}^N$ is given by
\[ g\big|_{\mathbb{S}^N\setminus\{-e_{N+1}\}}
  = \phi(x)^2\,\delta_{ij}\,dx_i\otimes dx_j,
  \qquad x=\sigma(z),\]
where
\[ \phi(x):=\frac{2}{1+|x|^2}\quad {\rm for}\ x\in \mathbb{R}^N.\]
Hence the round metric is conformal to the flat Euclidean metric, with conformal factor $\phi^2$. 

Motivated by this,  we introduce the conformal pullback  of a function \(u\) on \(\mathbb S^N\) to \(\mathbb R^N\) by
\begin{equation}\label{taus}
     \mathcal{T}_s[u](x):
  = \phi(x)^{\frac{N-2s}{2}}\,u\bigl(\sigma^{-1}(x)\bigr)
  \quad{\rm for}\ \, x\in \mathbb{R}^N,\   s\in(0,1).
\end{equation}
This is the natural transformation law associated with the conformal covariance of the operator of order \(2s\). Since stereographic projection is a diffeomorphism between
$\mathbb{S}^N\setminus\{-e_{N+1}\}$ and $\mathbb{R}^N$, the pullback $\mathcal{T}_s[u]$ only uses the
values of $u$ on $\mathbb{S}^N\setminus\{-e_{N+1}\}$. The pole $p=-e_{N+1}$ corresponds to the point
at infinity under stereographic projection; hence the regularity of $u$ at $p$ is encoded by the
asymptotic behavior of $v=\mathcal{T}_s[u]$ as $|x|\to\infty$. In particular, $u$ extends
continuously to $p$ if and only if
\[
v(x)\sim u(p)\,\phi(x)^{\frac{N-2s}{2}}
\qquad |x|\to\infty,
\]
since $\phi(x)=\frac{2}{1+|x|^2}\sim 2|x|^{-2}$ as $|x|\to\infty$. In particular, for \(u\in C^\infty(\mathbb S^N)\), the transplanted function $\mathcal T_s[u]$
belongs to \(C^\infty(\mathbb R^N)\cap L^\infty(\mathbb R^N)\), and in fact
\(\mathcal T_s[u](x)=o(|x|^{-(N-2s)})\) as \(|x|\to\infty\).

Moreover, if \(u\) solves the fractional Yamabe equation
\begin{equation}\label{fracsn}
     \mathscr{P}^s_gu=\lambda\,u^{\frac{N+2s}{N-2s}}
  \quad\text{on }\,\mathbb S^N,
\end{equation}
with \(u>0\) on $\mathbb{S}^N$, then \(v_s:=\mathcal T_s[u]\) solves
\begin{equation}\label{fracrn}
     (-\Delta)^s v_s=\lambda\,v_s^{\frac{N+2s}{N-2s}}
  \quad\text{in}\,\,\mathbb R^N.
\end{equation}
In particular, if \(u\equiv C\) is a positive constant solution of $\eqref{fracsn}$, then
\[
  v_s(x)=C\left(\frac{2}{1+|x|^2}\right)^{\frac{N-2s}{2}}
\]
is a radial bubble solution of the Euclidean critical equation $\eqref{fracrn}$.

	%====================================================
    
\subsection{Kernel Representation and Logarithmic Dini Regularity}

In this subsection we establish the singular-kernel representation of $\mathscr P^{s+\ln}_g$ and the
basic regularity consequences needed throughout the paper. The proof proceeds by differentiating
the integral formula for $\mathscr P_g^{t}$ with respect to the order $t$ at $t=s$, which produces a
logarithmic perturbation of the standard fractional kernel. In particular, the principal value
structure and the precise constants must be tracked carefully. We also verify that the resulting
operator is well-defined on Hölder classes $C^\beta(\mathbb S^N)$ with $\beta>2s$, and we establish
the convergence of the difference quotients in $L^p$ as well as the limiting behavior as $s\to0^+$,
which connects $\mathscr P_g^{s+\ln}$ with the conformal logarithmic Laplacian.

\medskip
\noindent{\bf Proof of Theorem \ref{prop:frac-log-kernel}. }
$\textbf{(i)}$
Let $u\in C^\beta(\mathbb{S}^N)$ with $\beta>2s$ and fix $z\in\mathbb{S}^N$.
Writing~\eqref{eq:frac-sphere} as
\[
  \mathscr{P}^t_g u(z)
  = c_{N,t}\,\I_t(z) + A_{N,t}\,u(z),
  \quad \text{where}\quad
  \I_t(z):=\mathrm{p.v.}\!\int_{\mathbb{S}^N}
            \frac{u(z)-u(\zeta)}{|z-\zeta|^{N+2t}}\,dV_g(\zeta),
\]
we compute the derivative at $t=s$ as
\[
  \mathscr{P}^{s+\ln}_g u(z)
  = \left.\frac{d}{dt}\right|_{t=s}\mathscr{P}^t_g u(z)
  = c'_{N,s}\,\I_s(z) + c_{N,s}\,\I'_s(z) + A'_{N,s}\,u(z).
\]

We first compute $\I'_s(z)$. Differentiating under the integral sign,
formally we obtain
\begin{align*}
  \I'_s(z)
  &= \mathrm{p.v.}\!\int_{\mathbb{S}^N}
      (u(z)-u(\zeta))\,\frac{d}{dt}\Bigl(
        |z-\zeta|^{-N-2t}\Bigr)\Big|_{t=s}\,dV_g(\zeta)
 \\[1mm]& = \mathrm{p.v.}\!\int_{\mathbb{S}^N}
      \frac{u(z)-u(\zeta)}{|z-\zeta|^{N+2s}}\,
      \bigl(-2\ln|z-\zeta|\bigr)\,dV_g(\zeta).
\end{align*}
Next we justify the differentiation under the integral sign. Since $u\in C^\beta(\mathbb{S}^N)$ and
$\beta>2s$, we have the Hölder estimate
\[
  |u(z)-u(\zeta)|
  \le C\,|z-\zeta|^\beta,
\]
so that near the diagonal the integrand is bounded in absolute value by
\[
  C\,|z-\zeta|^{\beta-N-2s}\,|\ln|z-\zeta||.
\]
In local geodesic polar coordinates around $z$ this behaves like
\(
  |\ln r|\,r^{\beta-2s-1}
\),
which is integrable at $r=0$ because $\beta-2s>0$. This shows that
$|\I'_s(z)|<\infty$ and that differentiation under the integral sign is
licensed by the dominated convergence theorem.

Substituting the expression for $I'_s$ into the formula for
$\mathscr{P}^{s+\ln}_g u(z)$, we obtain
\[
  \mathscr{P}^{s+\ln}_g u(z)
  = c_{N,s}\,\mathrm{p.v.}\!\int_{\mathbb{S}^N}
      \frac{u(z)-u(\zeta)}{|z-\zeta|^{N+2s}}
      \bigl(-2\ln|z-\zeta|\bigr)\,dV_g(\zeta)
    + c'_{N,s}\,\I_s(z) + A'_{N,s}\,u(z).
\]
Since
\[
  \I_s(z)
  = \mathrm{p.v.}\!\int_{\mathbb{S}^N}
      \frac{u(z)-u(\zeta)}{|z-\zeta|^{N+2s}}\,dV_g(\zeta),
\]
we can absorb the term $c'_{N,s}\I_s(z)$ into the kernel by writing
\[
  c'_{N,s}\,\I_s(z)
  = c_{N,s}\,\mathrm{p.v.}\!\int_{\mathbb{S}^N}
      \frac{u(z)-u(\zeta)}{|z-\zeta|^{N+2s}}\,
      \frac{c'_{N,s}}{c_{N,s}}\,dV_g(\zeta)
  = c_{N,s}\,\mathrm{p.v.}\!\int_{\mathbb{S}^N}
      \frac{u(z)-u(\zeta)}{|z-\zeta|^{N+2s}}\,b_{N,s}\,dV_g(\zeta),
\]
where we set $b_{N,s}:=c'_{N,s}/c_{N,s}$. Combining the two integral terms we arrive at~\eqref{eq:frac-log-kernel}.

It remains to compute $b_{N,s}$ and $A'_{N,s}$. From the definition
\[
  c_{N,s}
  = 4^s\pi^{-N/2}
    \frac{\Gamma\!\left(\tfrac N2+s\right)}{\Gamma(2-s)}\,s(1-s),
\]
we obtain
\[
  \ln c_{N,s}
  = s\ln 4 - \frac N2\ln\pi
    + \ln\Gamma\!\left(\tfrac N2+s\right)
    - \ln\Gamma(2-s)
    + \ln s + \ln(1-s).
\]
Differentiating with respect to $s$ gives
\[
  \frac{c'_{N,s}}{c_{N,s}}
  = \ln 4
    + \psi\!\Bigl(\tfrac N2+s\Bigr)
    + \psi(2-s)
    + \frac{1}{s}
    - \frac{1}{1-s},
\]
which is~\eqref{eq:frac-log-constants}. Finally, since
\[
  A_{N,s}
  =\frac{\Gamma\!\left(\tfrac N2+s\right)}
          {\Gamma\!\left(\tfrac N2-s\right)},
\]
we obtain
\[
  \ln A_{N,s}
  =\ln\Gamma\!\left(\tfrac N2+s\right)
   -\ln\Gamma\!\left(\tfrac N2-s\right),
\]
and therefore
\[
  \frac{A'_{N,s}}{A_{N,s}}
  = \psi\!\Bigl(\tfrac N2+s\Bigr)
    + \psi\!\Bigl(\tfrac N2-s\Bigr).
\]
This yields~\eqref{eq:A-Ns-prime}.\smallskip

$\textbf{(ii)}$ 
First, by the assumption $\beta>2s$ and the H\"older continuity of $u$
we have
\[
  |u(z)-u(\zeta)|
  \le C\,|z-\zeta|^\beta,
\]
so that the kernel in~\eqref{eq:frac-log-kernel} is bounded in absolute
value by
\[
  C\,|z-\zeta|^{\beta-N-2s}\bigl(1+|\ln|z-\zeta||\bigr).
\]
As observed in the proof of \textbf{(i)}, this is integrable near the
diagonal because $\beta-2s>0$, while away from the diagonal the kernel
is smooth and bounded. This implies that
$\mathscr{P}^{s+\ln}_g u(z)$ is well-defined for every $z\in\mathbb{S}^N$ and
that $\mathscr{P}^{s+\ln}_g u\in L^\infty(\mathbb{S}^N)$.
Since $\mathbb{S}^N$ is compact, the inclusion
$L^\infty(\mathbb{S}^N)\hookrightarrow L^p(\mathbb{S}^N)$ is continuous
for all $1\le p<\infty$, which yields $\mathscr{P}^{s+\ln}_g u\in L^p$
for all $1\le p\le\infty$.

To prove the convergence of the difference quotients in \(C(\mathbb S^N)\), i.e. uniformly on \(\mathbb S^N\), fix \(\varepsilon>0\) such that
\[
(s-\varepsilon,s+\varepsilon)\subset(0,1),
\qquad s+\varepsilon<\beta/2.
\]
Write
\[
\mathscr P_g^t u = c_{N,t}\,f_t + A_{N,t}u,
\qquad
f_t(z):=\mathrm{p.v.}\!\int_{\mathbb S^N}
\frac{u(z)-u(\zeta)}{|z-\zeta|^{N+2t}}\,dV_g(\zeta).
\]

For \(z,\zeta\in\mathbb S^N\), set
\[
K_t(z,\zeta):=|z-\zeta|^{-N-2t}.
\]
Then
\[
\partial_t K_t(z,\zeta)
=-2\ln|z-\zeta|\,|z-\zeta|^{-N-2t},
\]
and
\[
\partial_t^2 K_t(z,\zeta)
=4(\ln|z-\zeta|)^2\,|z-\zeta|^{-N-2t}.
\]
By Taylor's formula in \(t\), for \(t\in(s-\varepsilon,s+\varepsilon)\),
\[
\left|
\frac{K_t(z,\zeta)-K_s(z,\zeta)}{t-s}
-\partial_t K_s(z,\zeta)
\right|
\le
C|t-s|\,(\ln|z-\zeta|)^2\,|z-\zeta|^{-N-2(s+\varepsilon)},
\]
where \(C>0\) is independent of \(z,\zeta,t\).

Since \(u\in C^\beta(\mathbb S^N)\), we have
\[
|u(z)-u(\zeta)|\le C\,|z-\zeta|^\beta.
\]
Hence
\[
\begin{aligned}
&|u(z)-u(\zeta)|\,
\left|
\frac{K_t(z,\zeta)-K_s(z,\zeta)}{t-s}
-\partial_t K_s(z,\zeta)
\right|  \le
C|t-s|\,|z-\zeta|^{\beta-N-2(s+\varepsilon)}(\ln|z-\zeta|)^2.
\end{aligned}
\]
Therefore,
\[
\left|
\frac{f_t(z)-f_s(z)}{t-s}
+2\,\mathrm{p.v.}\!\int_{\mathbb S^N}
\frac{u(z)-u(\zeta)}{|z-\zeta|^{N+2s}}
\ln|z-\zeta|\,dV_g(\zeta)
\right|
\le C|t-s|\,J(z),
\]
where
\[
J(z):=\int_{\mathbb S^N}
|z-\zeta|^{\beta-N-2(s+\varepsilon)}(\ln|z-\zeta|)^2\,dV_g(\zeta).
\]

We claim that \(\sup_{z\in\mathbb S^N}J(z)<\infty\). Indeed, near \(\zeta=z\), in local geodesic polar coordinates \((r,\omega)\) centered at \(z\), one has
\[
dV_g(\zeta)\sim r^{N-1}\,dr\,d\omega,
\qquad |z-\zeta|\sim r,
\]
so the radial part of \(J(z)\) behaves like
\[
\int_0^1 r^{\beta-2(s+\varepsilon)-1}|\ln r|^2\,dr,
\]
which is finite since \(\beta>2(s+\varepsilon)\). Away from the diagonal, the integrand is continuous and bounded. By compactness of \(\mathbb S^N\), this yields
\[
\sup_{z\in\mathbb S^N}J(z)<\infty.
\]

Consequently, we conclude that 
\[
\left\|
\frac{f_t-f_s}{t-s}
-\partial_t f_t\big|_{t=s}
\right\|_{C(\mathbb S^N)}
\le C|t-s|\,\sup_{z\in\mathbb S^N}J(z)
\longrightarrow 0
\quad \text{as}\quad t\to s,
\]
where
\[
\partial_t f_t\big|_{t=s}(z)
=
-2\,\mathrm{p.v.}\!\int_{\mathbb S^N}
\frac{u(z)-u(\zeta)}{|z-\zeta|^{N+2s}}
\ln|z-\zeta|\,dV_g(\zeta).
\]
Thus \(t\mapsto f_t\) is differentiable in \(C(\mathbb S^N)\) at \(t=s\).

Since \(t\mapsto c_{N,t}\) and \(t\mapsto A_{N,t}\) are smooth, it follows that
\(t\mapsto \mathscr P_g^t u\in C(\mathbb S^N)\) is \(C^1\) on \((s-\varepsilon,s+\varepsilon)\), and
\[
\left.\frac{d}{dt}\mathscr P_g^t u\right|_{t=s}
=
\mathscr P_g^{s+\ln}u.
\]
Therefore,
\[
\frac{\mathscr P_g^t u-\mathscr P_g^s u}{t-s}
\longrightarrow
\mathscr P_g^{s+\ln}u
\quad\text{in }C(\mathbb S^N)\quad \text{as}\quad t\to s.
\]
Since \(\mathbb S^N\) has finite volume, this also implies convergence in \(L^p(\mathbb S^N)\) for every \(1\le p\le+\infty\).

$\textbf{(iii)}$ Let
\[
\cJ_su(z):=\mathrm{p.v.}\!\int_{\mathbb S^N}
\frac{u(z)-u(\zeta)}{|z-\zeta|^{N+2s}}\,dV_g(\zeta),
\qquad
\cL_su(z):=\mathrm{p.v.}\!\int_{\mathbb S^N}
\frac{u(z)-u(\zeta)}{|z-\zeta|^{N+2s}}\ln|z-\zeta|\,dV_g(\zeta).
\]
Then, by \eqref{eq:frac-log-kernel},
\[
\mathscr P_g^{s+\ln}u
=
-2c_{N,s}\cL_su + c_{N,s}b_{N,s}\cJ_su + A'_{N,s}u.
\]
We will show that, uniformly on \(\mathbb S^N\),
\[
-2c_{N,s}\cL_su\to 0,\qquad
c_{N,s}b_{N,s}\cJ_su\to c_N\cJ_0u,\qquad
A'_{N,s}u\to A_Nu,
\]
as \(s\to0^+\). This yields \(\mathscr P_g^{s+\ln}u\to \mathscr P_g^{\ln}u\) by \eqref{eq:log-def}.

\medskip
\noindent\textbf{  Uniform integrability bounds.}
Since \(u\in C^\beta(\mathbb S^N)\), there exists \(C>0\) such that
\[
|u(z)-u(\zeta)|\le C\,d_g(z,\zeta)^\beta\le C'|z-\zeta|^\beta,
\qquad \forall z,\zeta\in\mathbb S^N,
\]
where we used the equivalence of geodesic and chordal distances on the compact sphere.

Fix \(s_0\in(0,\beta/2)\). For \(0<s\le s_0\), we have
\[
\frac{|u(z)-u(\zeta)|}{|z-\zeta|^{N+2s}}
\le C |z-\zeta|^{\beta-N-2s_0},
\]
and
\[
\frac{|u(z)-u(\zeta)|\,|\ln|z-\zeta||}{|z-\zeta|^{N+2s}}
\le C |z-\zeta|^{\beta-N-2s_0}|\ln|z-\zeta||.
\]
In local geodesic polar coordinates around \(z\), the corresponding radial behaviors are
\[
r^{\beta-2s_0-1}
\quad\text{and}\quad
|\ln r|\,r^{\beta-2s_0-1},
\]
which are integrable near \(r=0\) because \(\beta-2s_0>0\). Away from the diagonal, the kernels are bounded. Hence
\[
\sup_{0<s\le s_0}\|\cJ_su\|_{L^\infty(\mathbb S^N)}<\infty,
\qquad
\sup_{0<s\le s_0}\|\cL_su\|_{L^\infty(\mathbb S^N)}<\infty.
\]
In particular, for \(0<s<\beta/2\), the principal values are actually absolutely convergent.

\medskip
\noindent\textbf{  The logarithmic kernel part vanishes.}
Since
\[
c_{N,s}=4^s\pi^{-N/2}s\,\frac{\Gamma(\frac N2+s)}{\Gamma(1-s)}
\longrightarrow 0
\quad{\rm as}\ \,  s\to0^+,
\]
and \(\|\cL_su\|_\infty\) is uniformly bounded for \(0<s\le s_0\), we obtain 
\[
\|-2c_{N,s}\cL_su\|_{L^\infty(\mathbb S^N)}\le 2c_{N,s}\|\cL_su\|_\infty\longrightarrow 0\quad {\rm as}\ \, s\to0^+.
\]

\medskip
\noindent\textbf{ Convergence of the principal kernel term.}
Set
\[
\alpha_s:=c_{N,s}b_{N,s}.
\]
From \eqref{eq:frac-log-constants} and the explicit formula for \(c_{N,s}\), one has
\[
c_{N,s}=c_N\,s+O(s^2),
\qquad
b_{N,s}=\frac1s+O(1),
\]
hence
\[
\alpha_s=c_{N,s}b_{N,s}\longrightarrow c_N
\quad{\rm as}\ \,  s\to0^+.
\]

Now we claim that
\[
\cJ_su\longrightarrow \cJ_0u
\quad\text{uniformly on }\mathbb S^N,
\]
where
\[
\cJ_0u(z):=\int_{\mathbb S^N}\frac{u(z)-u(\zeta)}{|z-\zeta|^N}\,dV_g(\zeta).
\]
Indeed, for \(0<s\le s_0\), by the mean value theorem in the parameter \(t\mapsto |z-\zeta|^{-N-2t}\),
\[
\bigl||z-\zeta|^{-N-2s}-|z-\zeta|^{-N}\bigr|
\le 2s\,|\ln|z-\zeta||\,|z-\zeta|^{-N-2s_0}.
\]
Therefore,
\[
\begin{aligned}
|\cJ_su(z)-\cJ_0u(z)|
&\le \int_{\mathbb S^N}|u(z)-u(\zeta)|\,
\bigl||z-\zeta|^{-N-2s}-|z-\zeta|^{-N}\bigr|\,dV_g(\zeta) \\
&\le C s \int_{\mathbb S^N}
|z-\zeta|^{\beta-N-2s_0}|\ln|z-\zeta||\,dV_g(\zeta).
\end{aligned}
\]
The last integral is uniformly bounded in \(z\) by the same integrability argument as in (i), so
\[
\|\cJ_su-\cJ_0u\|_{L^\infty(\mathbb S^N)}\le Cs .
\]
Hence
\[
\begin{aligned}
\|\alpha_s\cJ_su-c_N\cJ_0u\|_\infty
&\le |\alpha_s-c_N|\,\|\cJ_su\|_\infty + c_N\|\cJ_su-\cJ_0u\|_\infty \longrightarrow 0\quad {\rm as}\ \, s\to0^+.
\end{aligned}
\]

\medskip
\noindent\textbf{Convergence of the zero-order term.}
By \eqref{eq:A-Ns-prime},
\[
A'_{N,s}
=
A_{N,s}\Bigl(\psi\!\bigl(\tfrac N2+s\bigr)+\psi\!\bigl(\tfrac N2-s\bigr)\Bigr).
\]
Since \(A_{N,s}\to 1\) as \(s\to0^+\), and \(\psi\) is continuous on \((0,\infty)\), we get
\[
A'_{N,s}\longrightarrow 2\psi\!\left(\tfrac N2\right)=A_N.
\]
Therefore,
\[
\|A'_{N,s}u-A_Nu\|_\infty
\le |A'_{N,s}-A_N|\,\|u\|_\infty
\longrightarrow 0.
\]

As a consequence, we derive that
\[
\mathscr P_g^{s+\ln}u \longrightarrow c_N\cJ_0u+A_Nu=\mathscr P_g^{\ln}u
\quad\text{uniformly on }\mathbb S^N,
\]
as \(s\to0^+\), proving the first claim. Since \(\mathbb S^N\) has finite volume, uniform convergence implies convergence in \(L^p(\mathbb S^N)\) for every \(1\le p\le\infty\). This proves the second claim. 
\hfill$\Box$\bigskip

Before proving Proposition~\ref{prop:Dini-frac-log}, we note that the key point is to control the
logarithmic singularity in the kernel by a suitable Dini-type modulus of continuity, which ensures
the principal value integral is well-defined and yields the desired continuity estimates.

\begin{proof}[\textbf{Proof of Proposition \ref{prop:Dini-frac-log}}]
We use the kernel representation \eqref{eq:frac-log-kernel}:
\[
  \mathscr{P}^{s+\ln}_g u(z)
  = c_{N,s}\,\mathrm{p.v.}\!\int_{\mathbb{S}^N}
      \frac{u(z)-u(\zeta)}{|z-\zeta|^{N+2s}}
      \bigl(-2\ln|z-\zeta|+b_{N,s}\bigr)\,dV_g(\zeta)
    + A'_{N,s}\,u(z).
\]
Since \(s\in(0,1)\) is fixed,  the main issue is the singular integral term.

Throughout the proof we use the standard local equivalence, uniform on the compact sphere,
\begin{equation}\label{eq:dg-chordal-equiv}
  C_1\,d_g(z,\zeta)\le |z-\zeta|\le C_2\,d_g(z,\zeta),
  \qquad z,\zeta\in\mathbb S^N,
\end{equation}
and in geodesic polar coordinates \((r,\theta)\) centered at \(z\),
\begin{equation}\label{eq:polar-volume}
  dV_g(\zeta)=J_z(r,\theta)\,r^{N-1}\,dr\,d\theta,
\end{equation}
with \(J_z\) smooth and uniformly bounded above and below for \(0<r<1\).

\medskip
\noindent\textbf{(i) Pointwise well-definedness under local Dini condition.}

Fix \(z\in\mathbb S^N\), and assume \(u\) is fractional--logarithmic Dini continuous of order \(s\) at \(z\), i.e.
\[
  \int_0^1 \frac{\omega_{u,z}(r)}{r^{1+2s}}(1+|\ln r|)\,dr<\infty.
\]
We show that the singular integral at \(z\) is absolutely convergent, hence the principal value is well defined.

Split
\[
\int_{\mathbb S^N} = \int_{d_g(z,\zeta)\le 1} + \int_{d_g(z,\zeta)>1}.
\]
For the far part \(d_g(z,\zeta)>1\), the kernel is bounded since \(|z-\zeta|\ge c>0\), so
\[
\left|
\frac{u(z)-u(\zeta)}{|z-\zeta|^{N+2s}}
\bigl(-2\ln|z-\zeta|+b_{N,s}\bigr)
\right|
\le C\bigl(|u(z)|+|u(\zeta)|\bigr).
\]
Because \(u\in L^1(\mathbb S^N)\) and \(\mathbb S^N\) has finite volume, this part is finite.

For the near part \(d_g(z,\zeta)\le 1\), using \eqref{eq:dg-chordal-equiv},
\[
|u(z)-u(\zeta)|\le \omega_{u,z}(d_g(z,\zeta))
\le \omega_{u,z}(C|z-\zeta|),
\]
and
\[
\frac{| -2\ln|z-\zeta|+b_{N,s}|}{|z-\zeta|^{N+2s}}
\le C\frac{1+|\ln d_g(z,\zeta)|}{d_g(z,\zeta)^{N+2s}}.
\]
Hence
\[
\begin{aligned}
&\int_{d_g(z,\zeta)\le 1}
\frac{|u(z)-u(\zeta)|}{|z-\zeta|^{N+2s}}
\bigl| -2\ln|z-\zeta|+b_{N,s}\bigr|\,dV_g(\zeta)\\
&\qquad\le
C\int_{d_g(z,\zeta)\le 1}
\frac{\omega_{u,z}(d_g(z,\zeta))}{d_g(z,\zeta)^{N+2s}}
\bigl(1+|\ln d_g(z,\zeta)|\bigr)\,dV_g(\zeta).
\end{aligned}
\]
Passing to geodesic polar coordinates around \(z\) and using \eqref{eq:polar-volume}, the singular integral is absolutely convergent at \(z\), and \(\mathscr P_g^{s+\ln}u(z)\) is well defined.

\medskip
\noindent\textbf{(ii) Continuity under uniform Dini condition.}

Assume now that \(u\) is uniformly fractional--logarithmic Dini continuous of order \(s\), i.e.
\[
  \int_0^1 \frac{\omega_u(r)}{r^{1+2s}}(1+|\ln r|)\,dr<\infty,
  \qquad
  \omega_u(r):=\sup_{x\in\mathbb S^N}\omega_{u,x}(r).
\]
We prove that \(\mathscr P_g^{s+\ln} u\) is continuous on \(\mathbb S^N\). Let
\[
Ku(z,\zeta):=
\frac{u(z)-u(\zeta)}{|z-\zeta|^{N+2s}}
\bigl(-2\ln|z-\zeta|+b_{N,s}\bigr),
\]
so that
\[
\mathscr P_g^{s+\ln}u(z)=c_{N,s}\int_{\mathbb S^N}Ku(z,\zeta)\,dV_g(\zeta)+A'_{N,s}u(z),
\]
where the integral is absolutely convergent by part (i). It remains to prove continuity of the integral term
\[
I(z):=\int_{\mathbb S^N}Ku(z,\zeta)\,dV_g(\zeta).
\]

Fix \(z_0\in\mathbb S^N\), and let \(z\to z_0\). We split
\[
I(z)-I(z_0)=\int_{B_\rho(z_0)}\!\!\!Ku(z,\zeta)\,dV_g(\zeta)
-\int_{B_\rho(z_0)}\!\!\!Ku(z_0,\zeta)\,dV_g(\zeta)
+\int_{\mathbb S^N\setminus B_\rho(z_0)}\!\!\!\!\!\!\!\!\!\!\!\!\!\bigl(Ku(z,\zeta)-Ku(z_0,\zeta)\bigr)\,dV_g(\zeta)
\]
for some small \(\rho\in(0,1/4)\), where \(B_\rho(z_0):=\{\zeta:d_g(z_0,\zeta)<\rho\}\).

We estimate the near and far parts separately.

\smallskip
\noindent\textbf{Near part.}
If $d_g(z,z_0)<\rho/2$, then
\(B_\rho(z_0)\subset B_{2\rho}(z)\). Thus, by the same estimate as in part (i) and the uniform modulus \(\omega_u\),
\[
\int_{B_\rho(z_0)} |Ku(z,\zeta)|\,dV_g(\zeta)
\le C\int_0^{2\rho}\frac{\omega_u(r)}{r^{1+2s}}(1+|\ln r|)\,dr.
\]
Similarly,
\[
\int_{B_\rho(z_0)} |Ku(z_0,\zeta)|\,dV_g(\zeta)
\le C\int_0^{\rho}\frac{\omega_u(r)}{r^{1+2s}}(1+|\ln r|)\,dr.
\]
Hence
\[
\left|\int_{B_\rho(z_0)}Ku(z,\zeta)\,dV_g(\zeta)
-\int_{B_\rho(z_0)}Ku(z_0,\zeta)\,dV_g(\zeta)\right|
\le C\int_0^{2\rho}\frac{\omega_u(r)}{r^{1+2s}}(1+|\ln r|)\,dr.
\]
By the uniform Dini condition, the right-hand side tends to \(0\) as \(\rho\downarrow0\), uniformly in \(z\) near \(z_0\).

\smallskip
\noindent\textbf{Far part.}
On \(\mathbb S^N\setminus B_\rho(z_0)\), if \(z\) is close to \(z_0\) then \(d_g(z,\zeta)\ge \rho/2\), so the kernel factor
\[
(z,\zeta)\mapsto \frac{-2\ln|z-\zeta|+b_{N,s}}{|z-\zeta|^{N+2s}}
\]
is smooth and uniformly bounded there. Also \(u\) is continuous, hence \(u(z)\to u(z_0)\). Therefore for each fixed \(\zeta\in \mathbb S^N\setminus B_\rho(z_0)\),
\[
Ku(z,\zeta)\to Ku(z_0,\zeta)\qquad z\to z_0.
\]
Moreover, for \(z\) near \(z_0\),
\[
|Ku(z,\zeta)-Ku(z_0,\zeta)|
\le C_\rho\bigl(|u(z)|+|u(z_0)|+|u(\zeta)|\bigr)
\le C_\rho\bigl(1+|u(\zeta)|\bigr),
\]
since \(u\) is continuous on the compact sphere and hence bounded. The right-hand side is integrable on \(\mathbb S^N\setminus B_\rho(z_0)\). Thus, by dominated convergence,
\[
\int_{\mathbb S^N\setminus B_\rho(z_0)}\bigl(Ku(z,\zeta)-Ku(z_0,\zeta)\bigr)\,dV_g(\zeta)\to 0
\quad{\rm as}\ \,  z\to z_0,
\]
for each fixed \(\rho>0\).

Combining the near- and far-part estimates: first choose \(\rho>0\) so small that the near-part bound is less than \(\varepsilon\), then choose \(z\) close enough to \(z_0\) so that the far part is less than \(\varepsilon\). This proves \(I(z)\to I(z_0)\), hence \(I\in C(\mathbb S^N)\). Therefore \(\mathscr P_g^{s+\ln}u=c_{N,s}I+A'_{N,s}u\) is continuous on \(\mathbb S^N\), as claimed.
\end{proof}

\subsection{Eigenvalues and Eigenfunctions}

In this subsection we describe the spectral structure of $\mathscr P^{s+\ln}_g$ on the round sphere
$(\mathbb S^N,g)$. Since $\mathscr P^{s+\ln}_g$ is obtained by differentiating the conformal
fractional Laplacian $\mathscr P_g^{t}$ with respect to the order, it remains diagonal with respect
to the spherical harmonic decomposition. We first derive the explicit eigenvalue formula in terms
of the Laplace--Beltrami eigenvalues $\lambda_k=k(k+N-1)$ and the digamma function, and then prove
that every $C^\beta$-eigenfunction of $\mathscr P^{s+\ln}_g$ must belong to a unique spherical
harmonic subspace $\mathcal H_k$. A further key ingredient for the classification is the strict
monotonicity of the discrete sequence $\{\varphi^{s+\ln}_N(\lambda_k)\}_{k\ge0}$, established in
Proposition~\ref{pr spc func}.

 From the property of the digamma function,   there exists a unique value $a_0\in(1,2)$ such that 
$$\psi(a_0+1)  +\psi(a_0-1)=0. $$ 
Direct computations shows that 
$$a_0\approx 1.8473. $$ 
There exists a unique value $a_1\in(\frac12,2)$ such that 
$$\psi(a_1+\frac12)  +\psi(a_1-\frac12)=0. $$ 
Direct computation implies that 
$$a_1\approx 1.5703. $$

\begin{lemma}\label{lm es-21}
Let $N\ge 1$, $s\in(0,1)$ and $N>2s$. Define, for $\lambda\ge 0$,
\begin{equation}\label{eq:phi0-def-clean}
\phi_0(s,N;\lambda):=
\psi\!\Bigl(\tfrac12+s+\sqrt{\lambda+\tfrac14(N-1)^2}\Bigr)
+\psi\!\Bigl(\tfrac12-s+\sqrt{\lambda+\tfrac14(N-1)^2}\Bigr),
\end{equation}
where $\psi$ is the digamma function. Then:

\smallskip\noindent\textup{(i)}
The map $\lambda\mapsto \phi_0(s,N;\lambda)$ is strictly increasing on $(0,\infty)$.

\smallskip\noindent\textup{(ii)} 
Let $a_0\in(1,2)$ be the unique number such that
\begin{equation}\label{eq:a0-def-clean}
\psi(a_0+1)+\psi(a_0-1)=0.
\end{equation}
For every $N\ge 2$ one has
\begin{equation}\label{eq:phi0-large-lambda-clean}
\phi_0(s,N;\lambda)>0
\qquad\text{whenever}\qquad
\lambda>\Lambda_N:=\Bigl(a_0-\tfrac12\Bigr)^2-\tfrac14(N-1)^2.
\end{equation}
In particular, if $N\ge 4$ then $\Lambda_N<0$ and hence $\phi_0(s,N;\lambda)>0$ for all $\lambda\ge 0$.

Moreover, for $\lambda=0$ the sign is as follows:
\begin{itemize}
\item[(a)] If $N=2$, then $\phi_0(s,2;0)<0$ for all $s\in(0,1)$.
\item[(b)] If $N=3$, then there exists a unique $s_0\in(0,1)$ such that
\[
\phi_0(s_0,3;0)=0,\qquad
\phi_0(s,3;0)>0\ \text{for }s\in(0,s_0),\qquad
\phi_0(s,3;0)<0\ \text{for }s\in(s_0,1).
\]
\end{itemize}

\smallskip\noindent\textup{(iii)}
Let $a_1\in(\tfrac12,2)$ be the unique number such that
\begin{equation}\label{eq:a1-def-clean}
\psi\Bigl(a_1+\tfrac12\Bigr)+\psi\Bigl(a_1-\tfrac12\Bigr)=0.
\end{equation}
When $N=1$, then $s\in(0,\tfrac12)$, one has $\phi_0(s,1;0)<0$ and
\[
\phi_0(s,1;\lambda)>0\qquad\text{for all }\lambda>\Lambda_1:=\Bigl(a_1-\tfrac12\Bigr)^2.
\]
Furthermore, the map $s\mapsto \phi_0(s,1;1)$ is strictly decreasing on $(0,\tfrac12)$; hence there exists a unique
$s_1\in(0,\tfrac12)$ such that
\[
\phi_0(s_1,1;1)=0,\qquad
\phi_0(s,1;1)>0\ \text{for }s\in(0,s_1),\qquad
\phi_0(s,1;1)<0\ \text{for }s\in(s_1,\tfrac12).
\]
\end{lemma}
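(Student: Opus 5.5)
Write $\mu:=\sqrt{\lambda+\tfrac14(N-1)^2}$. Then $\phi_0(s,N;\lambda)=\psi(\tfrac12+\mu+s)+\psi(\tfrac12+\mu-s)$, and the whole lemma reduces to properties of the one-variable function
\[
F_s(x):=\psi(x+s)+\psi(x-s),\qquad x=\tfrac12+\mu .
\]
For (i), I will use that $\psi$ is strictly increasing on $(0,\infty)$, since $\psi'(x)=\sum_{n\ge0}(x+n)^{-2}>0$. We also need the arguments to stay positive. The condition $N>2s$ gives $\tfrac12+\mu-s\ge \tfrac12+\tfrac{N-1}{2}-s=\tfrac{N}{2}-s>0$. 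Since $\mu$ is strictly increasing in $\lambda$, part (i) follows at once.

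For (ii), the key step is monotonicity of $F_s(x)$ in $s$ for fixed $x$. We have $\partial_sF_s(x)=\psi'(x+s)-\psi'(x-s)<0$, because $\psi'$ is strictly decreasing. So $F_s(x)$ is decreasing in $s$, and $F_s(x)>F_1(x)=\psi(x+1)+\psi(x-1)$ for $s<1$. Here $F_1$ is strictly increasing on $x>1$ and has the unique zero $a_0$; existence and uniqueness follow from the limit $-\infty$ as $x\to1^+$, the limit $+\infty$ as $x\to\infty$, and monotonicity. Hence $F_s(x)>0$ whenever $x\ge a_0$. The condition $x=\tfrac12+\mu>a_0$ is exactly $\lambda>\Lambda_N$.

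For $N\ge4$ we have $\mu\ge\tfrac32$, so $x\ge 2>a_0$ and $\Lambda_N<0$. For $\lambda=0$ we get $x=N/2$:

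(a) When $N=2$, $F_s(1)=\psi(1+s)+\psi(1-s)$. Since $F$ is decreasing in $s$, this is below $F_0(1)=2\psi(1)=-2\gamma<0$.

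(b) When $N=3$, $F_s(3/2)$ is strictly decreasing in $s$. At $s=0$ it equals $2\psi(3/2)=2(2-\gamma-2\ln2)>0$, since $\psi(3/2)\approx0.0365$. As $s\to1^-$ it tends to $-\infty$, because $\psi(\tfrac12+\cdot)$ has argument $\tfrac12-s$ approaching $-\tfrac12$. Here I must be careful: the argument $\tfrac32-s$ stays in $(\tfrac12,\tfrac32)$, so there is no pole. The actual limit is $\psi(5/2)+\psi(1/2)=(\tfrac43-\gamma-2\ln2)+(-\gamma-2\ln2)\approx 0.70-1.96<0$. The intermediate value theorem and strict monotonicity then give a unique $s_0$.

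For (iii), take $N=1$, so $s<\tfrac12$ and $x=\tfrac12+\sqrt\lambda$. By the same monotonicity in $s$, $F_s(x)>F_{1/2}(x)=\psi(x+\tfrac12)+\psi(x-\tfrac12)$. This function is increasing in $x>\tfrac12$ with zero $a_1$, which gives positivity for $\lambda>(a_1-\tfrac12)^2$. At $\lambda=0$ we have $x=\tfrac12$ and $F_s(\tfrac12)<F_0(\tfrac12)=2\psi(\tfrac12)<0$. At $\lambda=1$, $x=\tfrac32$, and $F_s(\tfrac32)$ is strictly decreasing. At $s=0$ it equals $2\psi(3/2)>0$. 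As $s\to\tfrac12$ it tends to $\psi(2)+\psi(1)=1-2\gamma<0$ (about $-0.154$). The IVT gives $s_1$.

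I expect the main obstacle to be the numerical sign checks: $\psi(3/2)>0$, $\psi(5/2)+\psi(1/2)<0$, $1-2\gamma<0$, and the bounds locating $a_0,a_1$. I would handle these with the exact half-integer values $\psi(n+\tfrac12)=-\gamma-2\ln2+\sum_{k=1}^n\frac{2}{2k-1}$ and with $\gamma\approx0.5772$.
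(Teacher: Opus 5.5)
Your proof is correct and is essentially the paper's argument. The paper also writes $\phi_0=\psi(A+s)+\psi(A-s)$ with $A=\tfrac12+\sqrt{\lambda+\tfrac14(N-1)^2}$ and uses that $s\mapsto\psi(A+s)+\psi(A-s)$ is strictly decreasing because $\psi'$ is decreasing. It compares with the endpoint $s=1$ (resp.\ $s=\tfrac12$ for $N=1$), whose zero in $A$ is $a_0$ (resp.\ $a_1$). The sign changes at $\lambda=0$ for $N=3$ and at $\lambda=1$ for $N=1$ are then settled by the intermediate value theorem via $2\psi(\tfrac32)>0$, $\psi(\tfrac52)+\psi(\tfrac12)<0$ and $\psi(2)+\psi(1)<0$.

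One fix: delete the retracted sentence in (b) claiming $F_s(\tfrac32)\to-\infty$ as $s\to1^-$. The arguments stay in $[\tfrac12,\tfrac52]$, and the limit is the finite negative number $\psi(\tfrac52)+\psi(\tfrac12)$ you then compute.
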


\begin{proof}
Set
\[
a(\lambda):=\sqrt{\lambda+\tfrac14(N-1)^2},\qquad
A(\lambda):=\tfrac12+a(\lambda),
\]
so that
\begin{equation}\label{eq:phi0-A-clean}
\phi_0(s,N;\lambda)=\psi\bigl(A(\lambda)+s\bigr)+\psi\bigl(A(\lambda)-s\bigr).
\end{equation}
Since $N>2s$, we have $A(\lambda)-s\ge \tfrac N2-s>0$, hence all terms are well-defined for $\lambda\ge 0$.

\medskip\noindent\textup{(i)}
For $\lambda>0$ we have $a(\lambda)>0$ and $A'(\lambda)=\frac1{2a(\lambda)}$. Differentiating
\eqref{eq:phi0-A-clean} yields
\[
\partial_\lambda\phi_0(s,N;\lambda)
=A'(\lambda)\Bigl[\psi'\bigl(A(\lambda)+s\bigr)+\psi'\bigl(A(\lambda)-s\bigr)\Bigr]
=\frac{1}{2a(\lambda)}\Bigl[\psi'\bigl(A+s\bigr)+\psi'\bigl(A-s\bigr)\Bigr]>0,
\]
because $\psi'(t)>0$ for $t>0$.

\medskip\noindent\textup{(ii)}
Fix $a>1$ and define $g_a(s):=\psi(a+s)+\psi(a-s)$ for $s\in(0,1]$. Then
\[
g_a'(s)=\psi'(a+s)-\psi'(a-s)<0,
\]
since the trigamma $\psi'$ is strictly decreasing on $(0,\infty)$ and $a+s>a-s>0$.
Thus $g_a$ is strictly decreasing and $g_a(s)\ge g_a(1)=\psi(a+1)+\psi(a-1)$ for $s\in(0,1]$.
Let $F(a):=\psi(a+1)+\psi(a-1)$ for $a>1$. Then $F'(a)=\psi'(a+1)+\psi'(a-1)>0$, hence $F$ is strictly increasing.
By \eqref{eq:a0-def-clean}, if $a>a_0$ then $F(a)>0$, and therefore $g_a(s)>0$.

Now assume $N\ge 2$ and $\lambda>\Lambda_N$ with $\Lambda_N$ as in \eqref{eq:phi0-large-lambda-clean}. Then
$A(\lambda)=\tfrac12+\sqrt{\lambda+\tfrac14(N-1)^2}>a_0$, hence
$\phi_0(s,N;\lambda)=g_{A(\lambda)}(s)>0$, proving \eqref{eq:phi0-large-lambda-clean}.
If $N\ge 4$, then $\Lambda_N<0$, so the conclusion holds for all $\lambda\ge 0$.

For $\lambda=0$, we have $A(0)=\tfrac N2$ and $\phi_0(s,N;0)=g_{N/2}(s)$.
If $N=2$, then $g_{1}(0)=2\psi(1)<0$ and since $g_1$ is strictly decreasing, $g_1(s)<0$ for all $s\in(0,1)$.
If $N=3$, then $g_{3/2}$ is strictly decreasing on $(0,1)$ and
\[
\lim_{s\downarrow 0} g_{3/2}(s)=2\psi\Bigl(\tfrac32\Bigr)>0,
\qquad
\lim_{s\uparrow 1} g_{3/2}(s)=\psi\Bigl(\tfrac52\Bigr)+\psi\Bigl(\tfrac12\Bigr)<0.
\]
Hence, by the intermediate value theorem and strict monotonicity, there exists a unique
$s_0\in(0,1)$ such that $g_{3/2}(s_0)=0$, and the stated sign property follows.

\medskip\noindent\textup{(iii)}
Let $N=1$ and $s\in(0,\tfrac12)$. Then $\tfrac12\pm s\in(0,1)$, and since $\psi<0$ on $(0,1]$, we get
$\phi_0(s,1;0)=\psi(\tfrac12+s)+\psi(\tfrac12-s)<0$.
If $\lambda>\Lambda_1:=(a_1-\tfrac12)^2$, then $A(\lambda)=\tfrac12+\sqrt{\lambda}>a_1$, and the same argument as above
(using \eqref{eq:a1-def-clean}) yields $\phi_0(s,1;\lambda)>0$.

Finally, for $\lambda=1$ we have $A(1)=\tfrac32$ and $\phi_0(s,1;1)=g_{3/2}(s)$ for $s\in(0,\tfrac12)$.
Since $g_{3/2}$ is strictly decreasing and
\[
g_{3/2}(0)=2\psi\Bigl(\tfrac32\Bigr)>0,\qquad
g_{3/2}\Bigl(\tfrac12\Bigr)=\psi(2)+\psi(1)<0,
\]
there exists a unique $s_1\in(0,\tfrac12)$ such that $g_{3/2}(s_1)=0$, and the claimed sign property follows.
\end{proof}

\begin{proposition}\label{pr spc func}
Let $N\ge 1$ be an integer, $s\in(0,1)$ and $N>2s$. Recall
\[
\lambda_k:=k(k+N-1),\qquad k\in\N_0.
\]
Then the mapping $k\mapsto \varphi^{s+\ln}_N(\lambda_k)$ is strictly increasing.
\end{proposition}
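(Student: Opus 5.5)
The plan is to exploit the fact that along the discrete spectrum the square root in \eqref{varphins} becomes linear in $k$. Indeed,
\[
\lambda_k+\tfrac14(N-1)^2=\bigl(k+\tfrac{N-1}{2}\bigr)^2,
\qquad\text{so}\qquad
\tfrac12+\sqrt{\lambda_k+\tfrac14(N-1)^2}=k+\tfrac N2=:A_k .
\]
With $R(A):=\Gamma(A+s)/\Gamma(A-s)$ and the function $\phi_0$ of Lemma~\ref{lm es-21}, which here reads $\phi_0=\psi(A+s)+\psi(A-s)$, we can write
\[
\varphi^{s+\ln}_N(\lambda_k)=h(A_k),\qquad h(A):=R(A)\,\bigl[\psi(A+s)+\psi(A-s)\bigr].
\]
Since $A_{k+1}=A_k+1$, it suffices to prove $h(A+1)>h(A)$ for every $A\ge \tfrac N2$. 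Here $A>s$ because $N>2s$. We will not need monotonicity of $h$ on the continuum.

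\textbf{Step 1: the difference $h(A+1)-h(A)$.} Use the recurrences $\Gamma(x+1)=x\Gamma(x)$ and $\psi(x+1)=\psi(x)+1/x$. These give
\[
R(A+1)=R(A)\,\frac{A+s}{A-s},
\qquad
\psi(A+1\pm s)=\psi(A\pm s)+\frac{1}{A\pm s}.
\]
Substituting and simplifying, one expects
\[
h(A+1)-h(A)=\frac{2R(A)}{A-s}\Bigl[s\bigl(\psi(A+s)+\psi(A-s)\bigr)+\frac{A}{A-s}\Bigr].
\]
Apply the recurrence once more, $s\,\psi(A-s)=s\,\psi(A+1-s)-\frac{s}{A-s}$. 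The bracket then collapses to
\[
F(A,s):=1+s\bigl[\psi(A+s)+\psi(A+1-s)\bigr].
\]
Since $R(A)>0$ and $A-s>0$, the proposition reduces to showing $F(A,s)>0$ for all $A\in\{\tfrac N2+k\}$.

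\textbf{Step 2: reduction to the endpoints via concavity.} The map $x\mapsto \psi(A+x)+\psi(A+1-x)$ is concave on $[0,1]$, because $\psi''<0$, and it is symmetric about $x=\tfrac12$. Hence its minimum over $[0,1]$ is attained at the endpoints, where it equals $\psi(A)+\psi(A+1)=2\psi(A)+\tfrac1A$. This gives
\[
F(A,s)\ \ge\ 1+s\Bigl(2\psi(A)+\tfrac1A\Bigr).
\]
Next, $\frac{d}{dA}\bigl(2\psi(A)+\tfrac1A\bigr)=2\psi'(A)-\tfrac{1}{A^2}>0$, using the classical bound $\psi'(A)>1/A$. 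So the function $2\psi(A)+\tfrac1A$ is increasing in $A$.

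\textbf{Step 3: case analysis.} This is the step where the constants must be checked carefully, and it is the main obstacle.
\begin{itemize}
\item If $N\ge2$, then $A\ge1$ and $s<1$. Using $\psi(1)=-\gamma$, we get $2\psi(A)+\tfrac1A\ge 1-2\gamma>0$, so $F>1>0$ directly. In fact, whenever the bracket is negative one would only need $F\ge 1+(2\psi(A)+\tfrac1A)$, and this positivity holds regardless.
\item If $N=1$, then $s<\tfrac12$ and $A\ge\tfrac12$. The worst case is $A=\tfrac12$, where $2\psi(\tfrac12)+2=2-2\gamma-4\ln2\approx-1.93$. Then $F>1-\tfrac12\cdot1.93>0$, with strict inequality since $s<\tfrac12$. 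For $A\ge\tfrac32$ the bracket is already positive.
\end{itemize}
These numerical margins must be verified with the exact values $\psi(1)=-\gamma$ and $\psi(\tfrac12)=-\gamma-2\ln 2$. In particular, the case $N=1$, $k=0$ is tight, with margin about $0.04$. If the concavity bound ever turned out insufficient, I would fall back on the sign information of Lemma~\ref{lm es-21}: whenever $\phi_0\ge0$ the positivity of the bracket in Step 1 is immediate. This leaves only the finitely many cases $N\in\{1,2,3\}$ with $k\le1$, which could be treated by direct evaluation.
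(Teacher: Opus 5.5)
Your proof is correct and takes a genuinely different route from the paper. There is one slip in Step 3 to fix. Since $1-2\gamma\approx-0.154$ is \emph{negative}, the claim ``$2\psi(A)+\tfrac1A\ge 1-2\gamma>0$, hence $F>1$'' is false; for $N=2$, $k=0$ and small $s$ one has $F(1,s)\approx 1-0.154\,s<1$. Your fallback sentence is the real argument and should replace it: if $b:=2\psi(A)+\tfrac1A<0$, then $0<s<1$ gives $F\ge 1+sb>1+b\ge 2-2\gamma>0$. Likewise, for $N=1$ the fact you need is exactly $\gamma+2\ln 2<2$.

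The paper argues on the continuum in $\lambda$. It writes $\varphi^{s+\ln}_N=\varphi_{N,s}\,\phi_0$ and notes that this product is increasing wherever $\phi_0\ge 0$. It then locates the region $\phi_0>0$ through the numerically defined thresholds $a_0\approx1.8473$ and $a_1\approx1.5703$ of Lemma~\ref{lm es-21}. The low modes are handled case by case ($N\ge4$, $N=2,3$, $N=1$) by sign comparisons. Only for $N=1$, $k=0\to1$ does the paper use your recurrence computation. There it simply asserts that $\frac{2s}{1/2-s}\phi_0(s,1;0)+\frac{1}{1/2-s}+\frac{1/2+s}{(1/2-s)^2}>0$. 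This is not evident: the first term is negative, and its leading singularity as $s\to\frac12$ cancels that of the last term.

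Your approach rests on the observation that $\tfrac12+\sqrt{\lambda_k+\tfrac14(N-1)^2}=k+\tfrac N2$ is an arithmetic progression with step $1$. This reduces everything to the exact identity $h(A+1)-h(A)=\frac{2R(A)}{A-s}F(A,s)$. Applying the recurrence once more to reach $F=1+s[\psi(A+s)+\psi(A+1-s)]$ removes that cancellation. Concavity of $\psi$ and the two values $\psi(1)$, $\psi(\tfrac12)$ then settle all $N$ and $k$ at once. You need neither $a_0,a_1$ nor monotonicity in continuous $\lambda$, and you justify the step the paper leaves unproved. In exchange, the paper's route yields the sign information on individual eigenvalues recorded in Remark~\ref{rem:sign-frac-log-eigs}, which your difference identity does not provide.
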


\begin{proof}
We split the proof into three cases according to the dimension:
\[
\text{(i) } N\ge 4,\qquad \text{(ii) } N=2,3,\qquad \text{(iii) } N=1.
\]

Recall that, for $\lambda\ge 0$,
\[
\varphi^{s+\ln}_N(\lambda)
=\varphi_{N,s}(\lambda)\,\phi_0(s,N;\lambda),
\]
where
\[
\varphi_{N,s}(\lambda)
=
\frac{\Gamma\!\big(\frac12+s+a(\lambda)\big)}
{\Gamma\!\big(\frac12-s+a(\lambda)\big)},
\qquad
\phi_0(s,N;\lambda)
=
\psi\!\big(\tfrac12+s+a(\lambda)\big)+\psi\!\big(\tfrac12-s+a(\lambda)\big),
\]
and $a(\lambda):=\sqrt{\lambda+\frac{(N-1)^2}{4}}$.
Under $N>2s$, both arguments of $\Gamma$ and $\psi$ are positive for every $\lambda\ge 0$, hence
\begin{equation}\label{eq:phi-varphi-pos}
\varphi_{N,s}(\lambda)>0 \quad{\rm for}\ \,  \lambda\ge 0.
\end{equation}
Moreover, $\lambda\mapsto \varphi_{N,s}(\lambda)$ is strictly increasing on $(0,\infty)$, since
\[
\frac{d}{d\lambda}\log\varphi_{N,s}(\lambda)
=\frac{1}{2a(\lambda)}\Bigl[\psi\!\big(\tfrac12+s+a(\lambda)\big)-\psi\!\big(\tfrac12-s+a(\lambda)\big)\Bigr]>0,
\qquad \lambda>0,
\]
where we used that $\psi$ is strictly increasing.
In addition, Lemma~\ref{lm es-21} (i) gives $\partial_\lambda\phi_0(s,N;\lambda)>0$ for $\lambda>0$.

Consequently, on any interval $I\subset(0,\infty)$ where $\phi_0(s,N;\lambda)\ge 0$ for all $\lambda\in I$, we have
\[
\frac{d}{d\lambda}\varphi^{s+\ln}_N(\lambda)
=\phi_0\,\varphi_{N,s}'+\varphi_{N,s}\,\phi_0'>0\qquad \text{on }I,
\]
because $\varphi_{N,s}>0$, $\varphi_{N,s}'>0$, and $\phi_0'>0$ there. Hence $\varphi^{s+\ln}_N$ is strictly increasing on $I$.

\medskip\noindent\textbf{(i) $N\ge 4$.}
By Lemma~\ref{lm es-21} (ii), $\phi_0(s,N;\lambda)>0$ for all $\lambda\ge 0$. Therefore $\varphi^{s+\ln}_N$ is strictly
increasing on $(0,\infty)$. Since $k\mapsto\lambda_k$ is strictly increasing on $\N_0$, it follows that
$k\mapsto \varphi^{s+\ln}_N(\lambda_k)$ is strictly increasing.

\medskip\noindent\textbf{(ii) $N=2,3$.}
Let $\Lambda_N$ be as in Lemma~\ref{lm es-21} (ii). Then $\phi_0(s,N;\lambda)>0$ for all $\lambda>\Lambda_N$.
Since $\lambda_k\to\infty$ and $\lambda_k$ is increasing, there exists $k_*=k_*(N)$ such that $\lambda_k>\Lambda_N$
for all $k\ge k_*$. Hence $\varphi^{s+\ln}_N(\lambda)$ is strictly increasing on $(\Lambda_N,\infty)$, and thus
\[
k\ge k_* \ \Longrightarrow\  \varphi^{s+\ln}_N(\lambda_{k+1})>\varphi^{s+\ln}_N(\lambda_k).
\]

It remains to compare the finitely many indices $0\le k<k_*$. For $N=2,3$ we have $\lambda_0=0$ and $\lambda_1=N$.
By Lemma~\ref{lm es-21} (ii), the sign of $\phi_0(s,N;0)$ is:
\[
\phi_0(s,2;0)<0\quad \forall s\in(0,1),\qquad
\exists\, s_0\in(0,1):\ \phi_0(s_0,3;0)=0,\ 
\phi_0(s,3;0)\ge 0\,(\le 0)\ \text{for } s\le s_0\,(\ge 0).
\]
Since $\varphi_{N,s}(0)>0$, the same holds for $\varphi^{s+\ln}_N(0)=\varphi_{N,s}(0)\phi_0(s,N;0)$.

On the other hand, $\lambda_1=N>\Lambda_N$ for $N=2,3$ ($k_*=1$), hence $\phi_0(s,N;\lambda_1)>0$ by Lemma~\ref{lm es-21} (ii) and thus
$\varphi^{s+\ln}_N(\lambda_1)>0$. To compare with $\lambda_0=0$, note that $\varphi_{N,s}(0)>0$ and
$\varphi^{s+\ln}_N(0)=\varphi_{N,s}(0)\phi_0(s,N;0)$.
If $N=2$, then $\phi_0(s,2;0)<0$ for all $s\in(0,1)$, hence $\varphi^{s+\ln}_2(\lambda_1)>\varphi^{s+\ln}_2(\lambda_0)$.
If $N=3$, let $s_0\in(0,1)$ be the unique number such that $\phi_0(s_0,3;0)=0$.
For $s\ge s_0$ we have $\phi_0(s,3;0)\le 0$, hence $\varphi^{s+\ln}_3(\lambda_0)\le 0<\varphi^{s+\ln}_3(\lambda_1)$.
For $s\in(0,s_0)$ we have $\phi_0(s,3;0)>0$, and since $\phi_0$ is increasing in $\lambda$,
$\phi_0(s,3;\lambda)\ge \phi_0(s,3;0)>0$ for all $\lambda\ge 0$; therefore $\varphi^{s+\ln}_3$ is strictly increasing on
$(0,\infty)$ and $\varphi^{s+\ln}_3(\lambda_1)>\varphi^{s+\ln}_3(\lambda_0)$.

Combining this with the strict increase on $(\Lambda_N,\infty)$ yields that
$k\mapsto \varphi^{s+\ln}_N(\lambda_k)$ is also strictly increasing.

\medskip\noindent\textbf{(iii) $N=1$.}
Here $\lambda_0=0$, $\lambda_1=1$, and $\lambda_k=k^2$ for $k\ge 0$.
Lemma~\ref{lm es-21} (iii) implies that for $s\in(0,\tfrac12)$,
\[
\phi_0(s,1;0)<0,\qquad \phi_0(s,1;1)\ge 0\,(\le 0)\ \text{for } s\le s_1\,(\ge s_1),
\]
for a unique $s_1\in(0,\tfrac12)$. In particular, $\varphi^{s+\ln}_1(0)<0$ for all $s\in(0,\tfrac12)$, while
$\varphi^{s+\ln}_1(1)=\varphi_{1,s}(1)\phi_0(s,1;1)$ may change sign at $s=s_1$.

Let $\Lambda_1$ be as in Lemma~\ref{lm es-21}(iii). Since $\lambda_2=4>\Lambda_1$, we have $\phi_0(s,1;\lambda)>0$
for all $\lambda\ge 4$, hence $\varphi^{s+\ln}_1$ is strictly increasing on $(4,\infty)$ and therefore
\[
k\ge 2\ \Longrightarrow\ \varphi^{s+\ln}_1(\lambda_{k+1})>\varphi^{s+\ln}_1(\lambda_k).
\]

It remains to check the first steps.
First, since $\varphi^{s+\ln}_1(0)<0$ and $\varphi^{s+\ln}_1(4)>0$, we get
\[
\varphi^{s+\ln}_1(\lambda_2)>\varphi^{s+\ln}_1(\lambda_0).
\]
Moreover, if $s\le s_1$ then $\phi_0(s,1;1)\ge 0$, hence $\varphi^{s+\ln}_1$ is strictly increasing on $(0,\infty)$ and thus
$\varphi^{s+\ln}_1(\lambda_1)>\varphi^{s+\ln}_1(\lambda_0)$ and $\varphi^{s+\ln}_1(\lambda_2)>\varphi^{s+\ln}_1(\lambda_1)$.

If $s\in(s_1,\tfrac12)$, note that
\[
\varphi_{1,s}(1)=\frac{\Gamma(\frac32+s)}{\Gamma(\frac32-s)}
=\frac{\frac12+s}{\frac12-s}\,\frac{\Gamma(\frac12+s)}{\Gamma(\frac12-s)}
=\frac{\frac12+s}{\frac12-s}\,\varphi_{1,s}(0),
\]
and, using $\psi(x+1)=\psi(x)+\frac1x$,
\[
\phi_0(s,1;1)=\psi(\tfrac32+s)+\psi(\tfrac32-s)
=\phi_0(s,1;0)+\frac{1}{\tfrac12+s}+\frac{1}{\tfrac12-s}.
\]
Hence
\begin{align*}
\varphi^{s+\ln}_1(1)-\varphi^{s+\ln}_1(0)
&=\varphi_{1,s}(0)\Bigg[
\frac{\frac12+s}{\frac12-s}\Bigl(\phi_0(s,1;0)+\frac{1}{\tfrac12+s}+\frac{1}{\tfrac12-s}\Bigr)
-\phi_0(s,1;0)\Bigg]\\
&=\varphi_{1,s}(0)\Bigg[
\frac{2s}{\frac12-s}\,\phi_0(s,1;0)
+\frac{1}{\frac12-s}+\frac{\frac12+s}{(\frac12-s)^2}
\Bigg]>0,
\end{align*}
which implies $\varphi^{s+\ln}_1(\lambda_1)>\varphi^{s+\ln}_1(\lambda_0)$. Therefore the strict increase holds also between $k=0$ and $k=1$.

Altogether, $k\mapsto \varphi^{s+\ln}_1(\lambda_k)$ is strictly increasing for $N=1$ as well.
\end{proof}

We now turn to the proof of the spectral characterization of $\mathscr P^{s+\ln}_g$, showing that it
is diagonalized by spherical harmonics and that its $C^\beta$-eigenfunctions are precisely the
elements of the eigenspaces $\mathcal H_k$.\medskip

\noindent {\bf Proof of Theorem \ref{thm:frac-log-basic}: }
\textbf{(i).} Since for every $t\in(0,1)$ we have
\[
  \mathscr{P}^t_g\Phi
  = \varphi_{N,t}(\lambda)\,\Phi
  \quad\text{on }\mathbb{S}^N,
\]
with $\varphi_{N,t}(\lambda)$ defined in $\ref{varphins}$.
Differentiating with respect to $t$ at $t=s$ gives
\[
  \mathscr{P}^{s+\ln}_g\Phi(z)
  = \left.\frac{d}{dt}\mathscr{P}^t_g\Phi(z)\right|_{t=s}
  = \left.\frac{d}{dt}\varphi_{N,t}(\lambda)\right|_{t=s}\,\Phi(z)
  = \varphi^{s+\ln}_N(\lambda)\,\Phi(z),
\]
where we set
\[
  \varphi^{s+\ln}_N(\lambda)
  := \left.\frac{d}{dt}\varphi_{N,t}(\lambda)\right|_{t=s}.
\]
To compute this derivative, write
\[
  \varphi_{N,t}(\lambda)
  = \frac{\Gamma(A_t)}{\Gamma(B_t)},
  \quad\text{with}\quad
  A_t:=\tfrac12+t+\rho_\lambda,\quad
  B_t:=\tfrac12-t+\rho_\lambda,
\]
where
\[
  \rho_\lambda:=\sqrt{\lambda+\tfrac14(N-1)^2}.
\]
Then
\[
  \ln\varphi_{N,t}(\lambda)
  = \ln\Gamma(A_t)-\ln\Gamma(B_t),
\]
and therefore
\[
  \frac{d}{dt}\ln\varphi_{N,t}(\lambda)
  = \psi(A_t)A'_t - \psi(B_t)B'_t
  = \psi(A_t)+\psi(B_t),
\]
since $A'_t=1$ and $B'_t=-1$. Hence
\[
  \frac{d}{dt}\varphi_{N,t}(\lambda)
  = \varphi_{N,t}(\lambda)\bigl[\psi(A_t)+\psi(B_t)\bigr],
\]
and evaluating at $t=s$ yields~\eqref{eq:frac-log-eigvalue}. This proves
that $\Phi$ is an eigenfunction of $\mathscr{P}^{s+\ln}_g$.

\textbf{(ii).} Suppose that $\Psi\in C^\infty(\mathbb{S}^N)$ satisfies
\eqref{eq:eig-frac-log} for some real number $\mu$, i.e.
\[
  \mathscr{P}^{s+\ln}_g\Psi = \mu\Psi\quad\text{on }\mathbb{S}^N.
\]

Let $\{Y_{k,j}\}_{k\ge0,\,1\le j\le d_k}$ be an orthonormal basis of
spherical harmonics on $\mathbb{S}^N$, so that
\[
  -\Delta_g Y_{k,j} = \lambda_k Y_{k,j},
  \quad \lambda_k = k(k+N-1),
\]
and each eigenspace
\(
  \mathcal{H}_k:=\mathrm{span}\{Y_{k,j}:1\le j\le d_k\}
\)
is finite-dimensional.
It is well known (see, e.g., \cite{chang2011fractional,chang2025fractional}) that
$\mathscr{P}^t_g$ commutes with $-\Delta_g$ and acts diagonally on
spherical harmonics:
\[
  \mathscr{P}^t_g Y_{k,j}
  = \varphi_{N,t}(\lambda_k)\,Y_{k,j}
  \qquad\text{for all }t\in(0,1),
\]
so that, by differentiation in $t$ at $t=s$,
\[
  \mathscr{P}^{s+\ln}_g Y_{k,j}
  = \varphi^{s+\ln}_N(\lambda_k)\,Y_{k,j}
  \qquad\text{for all }k\ge0,\ 1\le j\le d_k.
\]

Expanding $\Psi$ in spherical harmonics,
\[
  \Psi = \sum_{k=0}^\infty\sum_{j=1}^{d_k} a_{k,j} Y_{k,j},
\]
we obtain
\[
  \mathscr{P}^{s+\ln}_g\Psi
  = \sum_{k=0}^\infty\sum_{j=1}^{d_k}
      a_{k,j}\,\varphi^{s+\ln}_N(\lambda_k)\,Y_{k,j}.
\]
The eigenvalue equation
\(
  \mathscr{P}^{s+\ln}_g\Psi = \mu\Psi
\)
thus yields
\[
  \sum_{k,j} a_{k,j}\,\bigl(\varphi^{s+\ln}_N(\lambda_k)-\mu\bigr)\,Y_{k,j}
  = 0.
\]
By orthonormality of the family $\{Y_{k,j}\}$ we deduce that, for each
pair $(k,j)$,
\[
  a_{k,j}\,\bigl(\varphi^{s+\ln}_N(\lambda_k)-\mu\bigr) = 0.
\]

From Proposition \ref{pr spc func}, for any $\mu$, there exists a unique  $k\in\N_0$ such that 
\[
  \varphi^{s+\ln}_N(\lambda_k)=\mu.
\]
  Hence
\[
  \Psi
  \in \mathcal{H}_k,
\]
which is the desired conclusion.\hfill$\Box$\medskip

\section{Conformal Covariance and Yamabe-Type Problems}

In this section we develop the conformal geometric framework associated with the
$\mathscr P^{s+\ln}_g$. We first establish its conformal covariance
law under metric changes, then relate the spherical operator to its Euclidean counterpart via an
intertwining identity, and finally introduce the associated fractional--logarithmic $Q$-curvature
and the corresponding Yamabe-type equations on $\mathbb S^N$ and $\mathbb R^N$.

\subsection{Conformal Covariance Law}

In this subsection we first recall the conformal covariance of the conformal fractional Laplacian
$\mathscr P_g^{s}$, and then derive the corresponding conformal transformation law for the
fractional--logarithmic operator.

Let $\vartheta\in C^\infty(\mathbb{S}^N)$ be a positive function.
For each $s\in(0,1)$ we consider a conformal change of metric
\[
  \widetilde g = \vartheta^{\frac{4}{N-2s}} g,
  \qquad \vartheta>0.
\]
It is a standard fact (see, for instance,
\cite{chang2025fractional,gonzalez2016recent}) that the conformal
fractional Laplacian satisfies the covariance law
\begin{equation}\label{eq:frac-conformal-general}
  \mathscr{P}^s_{\widetilde g}(u)
  = \vartheta^{-\frac{N+2s}{N-2s}}\,
    \mathscr{P}^s_g(\vartheta\,u),
  \qquad u\in C^\infty(\mathbb{S}^N).
\end{equation}

In our setting it is convenient to work with a fixed conformal factor
$\eta$ for the metric, i.e.
\[
  \widetilde g = \eta\,g.
\]
For $N>2s$, there exists a unique positive function
$\vartheta_s=\vartheta_s(\eta)$ such that
\[
  \eta\,g = \vartheta_s^{\frac{4}{N-2s}}g,
  \qquad\text{namely}\qquad
  \vartheta_s := \eta^{\frac{N-2s}{4}}.
\]
Plugging $\vartheta=\vartheta_s$ into
\eqref{eq:frac-conformal-general} we obtain, for every
$u\in C^\infty(\mathbb{S}^N)$,
\begin{equation}\label{eq:frac-conformal-fixed-eta}
  \mathscr{P}^s_{\eta g}(u)
  = \vartheta_s^{-\frac{N+2s}{N-2s}}\,
    \mathscr{P}^s_g(\vartheta_s u)
  = \eta^{-\frac{N+2s}{4}}\,
    \mathscr{P}^s_g\bigl(\eta^{\frac{N-2s}{4}}u\bigr).
\end{equation}
This is the explicit conformal law for $\mathscr{P}^s_g$ under the metric
change $g\mapsto\eta g$, which we will differentiate in $s$ to obtain the
transformation rule for the conformal fractional--logarithmic operator.

Recall that for fixed $s\in(0,1)$, the conformal fractional--logarithmic operator is
defined by
\[
  \mathscr{P}^{s+\ln}_g u
  := \left.\frac{d}{dt}\mathscr{P}^t_g u\right|_{t=s}.
\]
In particular, with respect to the conformally related metric
$\eta g$ we define
\[
  \mathscr{P}^{s+\ln}_{\eta g} u
  := \left.\frac{d}{dt}\mathscr{P}^t_{\eta g} u\right|_{t=s},
  \quad u\in C^\infty(\mathbb{S}^N).
\]

Using \eqref{eq:frac-conformal-fixed-eta} with $t$ in place of $s$, we
can write, for $t$ in a neighbourhood of $s$,
\begin{equation}\label{eq:frac-conformal-t}
  \mathscr{P}^t_{\eta g}(u)
  = \eta^{-\frac{N+2t}{4}}\,
    \mathscr{P}^t_g\bigl(\eta^{\frac{N-2t}{4}}u\bigr).
\end{equation}
For convenience set
\[
  a(t):=\frac{N+2t}{4},\qquad
  b(t):=\frac{N-2t}{4},
\]
so that \eqref{eq:frac-conformal-t} becomes
\[
  \mathscr{P}^t_{\eta g}(u)
  = \eta^{-a(t)}\,\mathscr{P}^t_g\bigl(\eta^{b(t)}u\bigr).
\]

Differentiating with respect to $t$ at $t=s$ and using the definition of
$\mathscr{P}^{t+\ln}_g$, we obtain the following transformation law.

\begin{proof}[\textbf{Proof of Proposition \ref{prop:conf-frac-log}.}]
Starting from \eqref{eq:frac-conformal-t}, we write
\[
  \mathscr{P}^t_{\eta g}(u)
  = F(t)\,G(t),
\]
where
\[
  F(t):=\eta^{-a(t)},\qquad
  G(t):=\mathscr{P}^t_g\bigl(\eta^{b(t)}u\bigr).
\]
Differentiating and evaluating at $t=s$ we obtain
\[
  \mathscr{P}^{s+\ln}_{\eta g}u
  = F'(s)G(s)+F(s)G'(s).
\]

Since $a'(t)=\tfrac12$, we have
\[
  F'(t) = -a'(t)\ln(\eta)\,\eta^{-a(t)}
        = -\tfrac12\ln(\eta)\,\eta^{-a(t)},
\]
thus
\[
  F'(s)G(s)
  = -\frac12\ln(\eta)\,\eta^{-a(s)}
    \mathscr{P}^s_g\bigl(\eta^{b(s)}u\bigr).
\]

Next, we differentiate $G(t)$ using the definition of the
fractional--logarithmic operator and the chain rule for the factor
$\eta^{b(t)}$:
\[
  G'(t)
  = \mathscr{P}^{t+\ln}_g\bigl(\eta^{b(t)}u\bigr)
    + \mathscr{P}^t_g\bigl(b'(t)\ln(\eta)\,\eta^{b(t)}u\bigr).
\]
Since $b'(t)=-\tfrac12$, we have
\[
  G'(s)
  = \mathscr{P}^{s+\ln}_g\bigl(\eta^{b(s)}u\bigr)
    - \frac12\,\mathscr{P}^s_g\bigl((\ln\eta)\,\eta^{b(s)}u\bigr).
\]
Therefore
\[
  F(s)G'(s)
  = \eta^{-a(s)}
    \biggl[
      \mathscr{P}^{s+\ln}_g\bigl(\eta^{b(s)}u\bigr)
      - \frac12\,\mathscr{P}^s_g\bigl((\ln\eta)\,\eta^{b(s)}u\bigr)
    \biggr].
\]

Combining the expressions for $F'(s)G(s)$ and $F(s)G'(s)$, and recalling
that $a(s)=(N+2s)/4$ and $b(s)=(N-2s)/4$, we obtain
\[
  \mathscr{P}^{s+\ln}_{\eta g}u
  = \eta^{-\frac{N+2s}{4}}
    \biggl[
      \mathscr{P}^{s+\ln}_g\bigl(\eta^{\frac{N-2s}{4}}u\bigr)
      - \frac12\,\ln(\eta)\,
        \mathscr{P}^s_g\bigl(\eta^{\frac{N-2s}{4}}u\bigr)
      - \frac12\,\mathscr{P}^s_g\bigl((\ln\eta)\,\eta^{\frac{N-2s}{4}}u\bigr)
    \biggr],
\]
which is \eqref{eq:conf-frac-log-eta}.
The equivalent formulation \eqref{eq:conf-frac-log-phi} follows by the
setting $\varphi=\eta^{\frac{N-2s}{4}}u$.
\end{proof}

\subsection{Intertwining with the Euclidean Model}

In this subsection we explain how the spherical operators are related to their Euclidean
counterparts through stereographic projection. More precisely, the conformal pullback $\mathcal T_s$
intertwines  $\mathscr P_g^{s+\ln}$ on $\mathbb S^N$ with 
$(-\Delta)^{s+\ln}$ on $\mathbb R^N$, respectively. This correspondence allows us to transfer
definitions, mapping properties, and nonlinear Yamabe-type equations between $\mathbb S^N$ and
$\mathbb R^N$.

\begin{proof}[\textbf{Proof of Proposition \ref{prop:sphere-RN-frac-log}.}]
For each $t\in(0,1)$, define
\[
  v_t(x):=\mathcal{T}_t[u](x)
  = \phi(x)^{\frac{N-2t}{2}}\,u\bigl(\sigma^{-1}(x)\bigr),
  \qquad x\in \mathbb{R}^N.
\]
From the fractional correspondence we have, for all $t\in(0,1)$,
\begin{equation}\label{eq:frac-intertwining-t}
  \mathcal{T}_t\bigl[\mathscr{P}^t_g u\bigr](x)
  = \phi(x)^{-2t}\,(-\Delta)^t v_t(x),
  \qquad x\in \mathbb{R}^N.
\end{equation}
We differentiate this identity with respect to $t$ at $t=s$.

Set
\[
  F(t,x):=\mathcal{T}_t\bigl[\mathscr{P}^t_g u\bigr](x).
\]
Then
\[
  F'(s,x)
  = \Bigl(\frac{d}{dt}\mathcal{T}_t\Bigr)\Big|_{t=s}
        \bigl[\mathscr{P}^s_g u\bigr](x)
    + \mathcal{T}_s\bigl[\mathscr{P}^{s+\ln}_g u\bigr](x).
\]

We first compute the derivative of $\mathcal{T}_t$. For any
$w:\mathbb{S}^N\to\mathbb{R}$,
\[
  \mathcal{T}_t[w](x)
  = \phi(x)^{\frac{N-2t}{2}}\,w\bigl(\sigma^{-1}(x)\bigr),
\]
so
\[
  \frac{d}{dt}\mathcal{T}_t[w](x)\Big|_{t=s}
  = -\ln\phi(x)\,\phi(x)^{\frac{N-2s}{2}}\,w\bigl(\sigma^{-1}(x)\bigr)
  = -\ln\phi(x)\,\mathcal{T}_s[w](x).
\]
Hence
\[
  \Bigl(\frac{d}{dt}\mathcal{T}_t\Bigr)\Big|_{t=s}
      \bigl[\mathscr{P}^s_g u\bigr](x)
  = -\ln\phi(x)\,\mathcal{T}_s\bigl[\mathscr{P}^s_g u\bigr](x).
\]

From \eqref{eq:frac-intertwining-t} with $t=s$ we know that
\[
  \mathcal{T}_s\bigl[\mathscr{P}^s_g u\bigr](x)
  = \phi(x)^{-2s}\,(-\Delta)^s v_s(x),
\]
thus
\begin{equation}\label{eq:LHS-derivative}
  F'(s,x)
  = -\ln\phi(x)\,\phi(x)^{-2s}\,(-\Delta)^s v_s(x)
    + \mathcal{T}_s\bigl[\mathscr{P}^{s+\ln}_g u\bigr](x).
\end{equation}

Set
\[
  G(t,x):=\phi(x)^{-2t}\,(-\Delta)^t v_t(x).
\]
Then
\[
  G'(s,x)
  = \frac{d}{dt}\Bigl(\phi^{-2t}\Bigr)\Big|_{t=s}(-\Delta)^s v_s(x)
    + \phi(x)^{-2s}\frac{d}{dt}\Bigl((-\Delta)^t v_t\Bigr)\Big|_{t=s}(x).
\]
We have
\[
  \frac{d}{dt}\bigl(\phi^{-2t}\bigr)\Big|_{t=s}
  = -2\ln\phi(x)\,\phi(x)^{-2s},
\]
hence
\[
  \frac{d}{dt}\Bigl(\phi^{-2t}\Bigr)\Big|_{t=s}(-\Delta)^s v_s
  = -2\ln\phi(x)\,\phi(x)^{-2s}\,(-\Delta)^s v_s(x).
\]

Next, using the definition of the fractional--logarithmic Laplacian,
\[
  \frac{d}{dt}\Bigl((-\Delta)^t v_t\Bigr)\Big|_{t=s}
  = (-\Delta)^{s+\ln} v_s
    + (-\Delta)^s\Bigl(\frac{d}{dt}v_t\Big|_{t=s}\Bigr).
\]
Since
\[
  v_t(x)
  = \phi(x)^{\frac{N-2t}{2}}\,u(\sigma^{-1}(x)),
\]
we obtain
\[
  \frac{d}{dt}v_t(x)\Big|_{t=s}
  = -\ln\phi(x)\,\phi(x)^{\frac{N-2s}{2}}\,u(\sigma^{-1}(x))
  = -\ln\phi(x)\,v_s(x),
\]
so
\[
  (-\Delta)^s\Bigl(\frac{d}{dt}v_t\Big|_{t=s}\Bigr)(x)
  = -(-\Delta)^s\bigl((\ln\phi)\,v_s\bigr)(x).
\]

Putting these together gives
\begin{align}
  G'(s,x)
  &= -2\ln\phi(x)\,\phi(x)^{-2s}\,(-\Delta)^s v_s(x)  \notag\\
  &\quad + \phi(x)^{-2s}\Bigl[
      (-\Delta)^{s+\ln} v_s(x)
      - (-\Delta)^s\bigl((\ln\phi)\,v_s\bigr)(x)
    \Bigr].\label{eq:RHS-derivative}
\end{align}

Since $F(t,x)=G(t,x)$ for all $t\in(0,1)$ and $x\in \mathbb{R}^N$, we
have $F'(s,x)=G'(s,x)$.
Equating \eqref{eq:LHS-derivative} and \eqref{eq:RHS-derivative}, we get
\begin{align*}
  &-\ln\phi(x)\,\phi(x)^{-2s}\,(-\Delta)^s v_s(x)
    + \mathcal{T}_s\bigl[\mathscr{P}^{s+\ln}_g u\bigr](x) \\
  &\qquad
   = -2\ln\phi(x)\,\phi(x)^{-2s}\,(-\Delta)^s v_s(x) \\
  &\qquad\quad
     + \phi(x)^{-2s}\Bigl[
        (-\Delta)^{s+\ln} v_s(x)
        - (-\Delta)^s\bigl((\ln\phi)\,v_s\bigr)(x)
      \Bigr].
\end{align*}
Rearranging terms, this is equivalent to
\[
  \mathcal{T}_s\bigl[\mathscr{P}^{s+\ln}_g u\bigr](x)
  = \phi(x)^{-2s}\Bigl[
      (-\Delta)^{s+\ln} v_s(x)
      - (-\Delta)^s\bigl((\ln\phi)\,v_s\bigr)(x)
      - \bigl(\ln\phi(x)\bigr)\,(-\Delta)^s v_s(x)
    \Bigr],
\]
which is exactly \eqref{eq:intertwining-frac-log}.
\end{proof}

\subsection{Q-Curvature and Yamabe-Type Problems}

In this subsection we introduce the curvature prescription problem naturally associated with
$\mathscr P^{s+\ln}_g$. We formulate the constant fractional--logarithmic $Q$-curvature problem on
$\mathbb S^N$, derive the corresponding Yamabe-type equation, and explain its Euclidean counterpart
via stereographic projection. This framework will be used later to establish the sphere--Euclidean
equivalence (Theorem~\ref{thm:frac-log-correspondence}) and to construct explicit bubble solutions
(Theorem~\ref{prop:radial-bubble-frac-log}).

The constant fractional--logarithmic $Q$-curvature problem on $(\mathbb S^N,g)$ consists in finding a
conformal metric
\[
\tilde g = u^{\frac{4}{N-2s}}\,g
\]
such that $Q^{s+\ln}_{\tilde g}\equiv \mu$ is constant on $\mathbb S^N$ for some $\mu\in\mathbb R$.
By \eqref{eq:frac-log-Yamabe-eq}, this is equivalent to finding a positive solution $u>0$ of the
fractional--logarithmic Yamabe-type equation
\begin{equation}\label{eq:frac-log-Yamabe}
\mathscr P^{s+\ln}_g(u)
= \frac{2}{N-2s}\,(\ln u)\,\mathscr P^s_g(u)
+ \frac{2}{N-2s}\,\mathscr P^s_g\bigl((\ln u)\,u\bigr)
+ \mu\,u^{\frac{N+2s}{N-2s}}
\quad\text{on }\mathbb S^N.
\end{equation}

The curvature quantity $Q^{s+\ln}_{\tilde g}$ and the equation \eqref{eq:frac-log-Yamabe} should be
viewed as the natural fractional--logarithmic counterparts of the fractional $Q$-curvature and the
fractional Yamabe equation. Moreover, since $\mathscr P^{s+\ln}_g$ arises by differentiating
$\mathscr P_g^t$ with respect to the order, one expects that as $s\to0^+$ the equation
\eqref{eq:frac-log-Yamabe} converges (after a suitable normalization) to the logarithmic Yamabe
equation, with the additional terms involving $\ln u$ appearing as first-order corrections in the
parameter $s$.

\begin{proof}[\textbf{Proof of Proposition \ref{prop:limit-frac-log-yamabe-us}.}]
We pass to the limit in \eqref{eq:frac-log-Yamabe-s-family-us} term by term.

Since \(u_{s_k}\to u_0\) in \(C^\beta(\mathbb S^N)\) and \(u_0>0\), for \(k\) large enough we have
\[
u_{s_k}\ge c>0 \quad\text{on }\mathbb S^N,
\]
hence \(\ln u_{s_k}\) is well defined and
\[
\ln u_{s_k}\to \ln u_0
\quad\text{in }C^\beta(\mathbb S^N)
\]
and in particular uniformly.

By Proposition~\ref{prop:frac-log-kernel} $(ii)$, applied to \(u_{s_k}\), using the convergence in \(C^\beta\) and the continuity of the operator family under the same Hölder control, we have
\[
\mathscr P_g^{s_k+\ln}(u_{s_k}) \longrightarrow \mathscr P_g^{\ln}(u_0)
\qquad\text{uniformly on }\mathbb S^N.
\]

Next, for smooth functions \(v\), one has
\[
\mathscr P_g^{s_k}(v)\to v
\qquad\text{uniformly on }\mathbb S^N
\quad k\to\infty,
\]
because \(c_{N,s_k}\to0\) and \(A_{N,s_k}\to1\). Applying this to \(v=u_{s_k}\) and \(v=(\ln u_{s_k})u_{s_k}\), together with the convergence \(u_{s_k}\to u_0\) and \((\ln u_{s_k})u_{s_k}\to (\ln u_0)u_0\) in \(C^\beta\), yields
\[
\mathscr P_g^{s_k}(u_{s_k})\to u_0,
\qquad
\mathscr P_g^{s_k}\bigl((\ln u_{s_k})u_{s_k}\bigr)\to (\ln u_0)u_0
\]
uniformly on \(\mathbb S^N\).

Also,
\[
\frac{2}{N-2s_k}\to \frac{2}{N},
\qquad
u_{s_k}^{\frac{N+2s_k}{N-2s_k}}\to u_0
\]
uniformly on \(\mathbb S^N\), since \(u_{s_k}\to u_0>0\) uniformly and the exponent tends to \(1\). Hence
\[
\frac{2}{N-2s_k}\,\ln u_{s_k}\,\mathscr P_g^{s_k}(u_{s_k})
\to \frac{2}{N}\,u_0\ln u_0,
\]
\[
\frac{2}{N-2s_k}\,\mathscr P_g^{s_k}\bigl((\ln u_{s_k})u_{s_k}\bigr)
\to \frac{2}{N}\,u_0\ln u_0,
\]
and
\[
\mu_{s_k}\,u_{s_k}^{\frac{N+2s_k}{N-2s_k}}
\to \mu_0 u_0
\]
uniformly on \(\mathbb S^N\).

Passing to the limit in \eqref{eq:frac-log-Yamabe-s-family-us}, we obtain
\[
\mathscr P_g^{\ln}(u_0)
=
\frac{2}{N}\,u_0\ln u_0
+\frac{2}{N}\,u_0\ln u_0
+\mu_0u_0
=
\frac{4}{N}\,u_0\ln u_0+\mu_0u_0,
\]
which is exactly \eqref{eq:log-yamabe-limit-us}.
\end{proof}

\bigskip

We now prove the equivalence between the spherical and Euclidean fractional--logarithmic
Yamabe-type problems. The argument relies on the intertwining identity in
Proposition~\ref{prop:sphere-RN-frac-log} together with the conformal pullback $\mathcal T_s$:
starting from a solution on $\mathbb S^N$ we obtain a solution on $\mathbb R^N$ by applying
$\mathcal T_s$, and conversely we recover the spherical equation by inverting the pullback and
using the correspondence of the conformal factors.

\begin{proof}[\textbf{Proof of Theorem \ref{thm:frac-log-correspondence}.}]
$(a)$ Assume first that $u$ satisfies
\eqref{eq:frac-log-Yamabe-SN} on $\mathbb{S}^N$.
We apply the transform $\mathcal{T}_s$ to both sides of
\eqref{eq:frac-log-Yamabe-SN} and use the sphere--Euclidean
correspondence for $\mathscr{P}^{s+\ln}_g$ and $\mathscr{P}^s_g$.

By Proposition~\ref{prop:sphere-RN-frac-log} we have
\begin{equation}\label{eq:proof-LHS}
  \mathcal{T}_s\bigl[\mathscr{P}^{s+\ln}_g u\bigr]
  = \phi^{-2s}\Bigl[
      (-\Delta)^{s+\ln} v_s
      - (-\Delta)^s\bigl((\ln\phi)\,v_s\bigr)
      - (\ln\phi)\,(-\Delta)^s v_s
    \Bigr].
\end{equation}

We treat each term in \eqref{eq:frac-log-Yamabe-SN} separately. First, using the fractional correspondence
\(
  \mathcal{T}_s[\mathscr{P}^s_g w]=\phi^{-2s}(-\Delta)^s\mathcal{T}_s[w]
\)
we obtain
\begin{equation}\label{eq:Ts-Ps-u}
  \mathcal{T}_s\bigl[\mathscr{P}^s_g u\bigr]
  = \phi^{-2s}(-\Delta)^s v_s.
\end{equation}
Next, since
\[
  u(\sigma^{-1}(x))
  = \phi(x)^{-\frac{N-2s}{2}}v_s(x),
\]
we have
\[
  \ln u(\sigma^{-1}(x))
  = \ln v_s(x) - \frac{N-2s}{2}\,\ln\phi(x).
\]

For the term $\ln u\,\mathscr{P}^s_g u$ we compute
\begin{align}
  \mathcal{T}_s\bigl[\ln u\,\mathscr{P}^s_g u\bigr](x)
  &= \phi(x)^{\frac{N-2s}{2}}\,
      \ln u(\sigma^{-1}(x))\,
      \mathscr{P}^s_g u(\sigma^{-1}(x)) \notag\\
  &= \phi(x)^{\frac{N-2s}{2}}
     \Bigl(\ln v_s(x)-\frac{N-2s}{2}\ln\phi(x)\Bigr)
     \phi(x)^{-\frac{N+2s}{2}}(-\Delta)^s v_s(x) \notag\\
  &= \phi(x)^{-2s}
     \Bigl(\ln v_s(x)-\frac{N-2s}{2}\ln\phi(x)\Bigr)
     (-\Delta)^s v_s(x).\label{eq:Ts-lnu-Psu}
\end{align}
Here we used again that
\(
  \mathscr{P}^s_g u(\sigma^{-1})
  = \phi^{-\frac{N+2s}{2}}(-\Delta)^s v_s
\).

For the term $\mathscr{P}^s_g((\ln u)u)$ we first compute
\[
  (\ln u)u\bigl(\sigma^{-1}(x)\bigr)
  = \phi(x)^{-\frac{N-2s}{2}}v_s(x)
    \Bigl(\ln v_s(x)-\frac{N-2s}{2}\ln\phi(x)\Bigr),
\]
and hence
\begin{equation}\label{eq:Ts-ln-u-u}
  \mathcal{T}_s\bigl[(\ln u)u\bigr](x)
  = v_s(x)\Bigl(
      \ln v_s(x)-\frac{N-2s}{2}\ln\phi(x)
    \Bigr).
\end{equation}
Applying the fractional correspondence to
$\mathscr{P}^s_g((\ln u)u)$ gives
\begin{equation}\label{eq:Ts-Ps-lnu-u}
  \mathcal{T}_s\bigl[\mathscr{P}^s_g((\ln u)u)\bigr]
  = \phi^{-2s}
      (-\Delta)^s\Bigl(
        v_s\Bigl(\ln v_s-\tfrac{N-2s}{2}\ln\phi\Bigr)
      \Bigr).
\end{equation}

Finally, for the power nonlinearity we use
\[
  u^{\frac{N+2s}{N-2s}}(\sigma^{-1}(x))
  = \phi(x)^{-\frac{N+2s}{2}}v_s(x)^{\frac{N+2s}{N-2s}},
\]
so that
\begin{equation}\label{eq:Ts-power}
  \mathcal{T}_s\Bigl[u^{\frac{N+2s}{N-2s}}\Bigr](x)
  = \phi(x)^{\frac{N-2s}{2}}\,
     \phi(x)^{-\frac{N+2s}{2}}\,
     v_s(x)^{\frac{N+2s}{N-2s}}
  = \phi(x)^{-2s}v_s(x)^{\frac{N+2s}{N-2s}}.
\end{equation}

Gathering \eqref{eq:Ts-lnu-Psu}, \eqref{eq:Ts-Ps-lnu-u}
and \eqref{eq:Ts-power}, we obtain
\begin{align}
  &\mathcal{T}_s\Bigl[
      \frac{2}{N-2s}\,\ln u\,\mathscr{P}^s_g u
      + \frac{2}{N-2s}\,\mathscr{P}^s_g\bigl((\ln u)u\bigr)
      + \mu\,u^{\frac{N+2s}{N-2s}}
    \Bigr] \notag\\
  &= \phi^{-2s}\Biggl\{
        \frac{2}{N-2s}\Bigl[
          \Bigl(\ln v_s-\tfrac{N-2s}{2}\ln\phi\Bigr)
            (-\Delta)^s v_s
          + (-\Delta)^s\Bigl(
              v_s\Bigl(\ln v_s-\tfrac{N-2s}{2}\ln\phi\Bigr)
            \Bigr)
        \Bigr]
        + \mu\,v_s^{\frac{N+2s}{N-2s}}
      \Biggr\}.\label{eq:proof-RHS-raw}
\end{align}

We now compare \eqref{eq:proof-LHS} with
$\mathcal{T}_s$ applied to the right-hand side of
\eqref{eq:frac-log-Yamabe-SN}.
Using \eqref{eq:frac-log-Yamabe-SN} and
\eqref{eq:proof-RHS-raw}, the identity
\[
  \mathcal{T}_s\bigl[\mathscr{P}^{s+\ln}_g u\bigr]
  = \mathcal{T}_s\Bigl[
      \frac{2}{N-2s}\,\ln u\,\mathscr{P}^s_g u
      + \frac{2}{N-2s}\,\mathscr{P}^s_g\bigl((\ln u)u\bigr)
      + \mu\,u^{\frac{N+2s}{N-2s}}
    \Bigr]
\]
is equivalent to
\begin{align*}
  &(-\Delta)^{s+\ln} v_s
   - (-\Delta)^s\bigl((\ln\phi)\,v_s\bigr)
   - (\ln\phi)\,(-\Delta)^s v_s \\
  &\quad =
  \frac{2}{N-2s}\Bigl[
      \Bigl(\ln v_s-\tfrac{N-2s}{2}\ln\phi\Bigr)\,(-\Delta)^s v_s
      + (-\Delta)^s\Bigl(
          v_s\Bigl(\ln v_s-\tfrac{N-2s}{2}\ln\phi\Bigr)
        \Bigr)
    \Bigr]
    + \mu\,v_s^{\frac{N+2s}{N-2s}}.
\end{align*}
Expanding the right-hand side and grouping the terms with $\ln\phi$ we
find
\begin{align*}
  &\frac{2}{N-2s}\Bigl[
      (\ln v_s)(-\Delta)^s v_s
      + (-\Delta)^s(v_s\ln v_s)
    \Bigr] \\
  &\qquad
    - \Bigl[
        (\ln\phi)\,(-\Delta)^s v_s
        + (-\Delta)^s\bigl(v_s\ln\phi\bigr)
      \Bigr]
    + \mu\,v_s^{\frac{N+2s}{N-2s}}.
\end{align*}
Hence all terms involving $\ln\phi$ cancel out, and we are left with
\[
  (-\Delta)^{s+\ln} v_s
  = \frac{2}{N-2s}\Bigl[
        (\ln v_s)\,(-\Delta)^s v_s
        + (-\Delta)^s(v_s\ln v_s)
    \Bigr]
    + \mu\,v_s^{\frac{N+2s}{N-2s}},
\]
which is exactly \eqref{eq:frac-log-Yamabe-RN}. This proves (i) $\Rightarrow$ (ii)
for smooth positive solutions.

Conversely, suppose $v_s>0$ solves \eqref{eq:frac-log-Yamabe-RN} in
$ \mathbb{R}^N$. Define
\[
  u(z):=\phi(x)^{-\frac{N-2s}{2}}v_s(x),
  \qquad x=\sigma(z).
\]
Then $u>0$ on $\mathbb{S}^N$ and $v_s=\mathcal{T}_s[u]$. Reversing the
previous computations (now applying $\mathcal{T}_s^{-1}$) shows that
$u$ satisfies \eqref{eq:frac-log-Yamabe-SN} on $\mathbb{S}^N$.
We omit the algebraic details, which are entirely analogous to the
above.
\end{proof}

We conclude this subsection by constructing an explicit family of positive ``bubble'' solutions.\medskip

\noindent{\bf Proof of Theorem \ref{prop:radial-bubble-frac-log}. }
Since \(u\equiv C\) is constant on \(\mathbb S^N\), the singular-integral parts vanish, and thus
\[
\mathscr P_g^s(u)=A_{N,s}C,
\qquad
\mathscr P_g^{s+\ln}(u)=A'_{N,s}C.
\]
Also, \((\ln u)\,u=(\ln C)\,C\) is constant, so
\[
\mathscr P_g^s\bigl((\ln u)\,u\bigr)=A_{N,s}(\ln C)\,C.
\]
Substituting these identities into \eqref{eq:frac-log-Yamabe-SN}, we obtain
\[
A'_{N,s}C
=
\frac{2}{N-2s}(\ln C)(A_{N,s}C)
+\frac{2}{N-2s}(A_{N,s}(\ln C)C)
+\mu\,C^{\frac{N+2s}{N-2s}}.
\]
Hence
\[
A'_{N,s}C
=
\frac{4}{N-2s}A_{N,s}(\ln C)C
+\mu\,C^{\frac{N+2s}{N-2s}},
\]
and therefore
\[
\mu
=
C^{1-\frac{N+2s}{N-2s}}
\left(
A'_{N,s}
-\frac{4}{N-2s}A_{N,s}\ln C
\right)
=
C^{-\frac{4s}{N-2s}}
\left(
A'_{N,s}
-\frac{4}{N-2s}A_{N,s}\ln C
\right).
\]
Thus \(u\equiv C\) is indeed a positive solution of \eqref{eq:frac-log-Yamabe-SN}.

Finally, by the definition of \(\mathcal T_s\),
\[
v_s(x)=\mathcal T_s[u](x)
=\phi(x)^{\frac{N-2s}{2}}u(\sigma^{-1}(x))
=
C\left(\frac{2}{1+|x|^2}\right)^{\frac{N-2s}{2}},
\]
which is positive and radial. The conclusion that \(v_s\) solves
\eqref{eq:frac-log-Yamabe-RN} follows directly from
Theorem~\ref{thm:frac-log-correspondence}.
\hfill$\Box$\medskip

\section{Applications to Sharp Sobolev Inequalities}

\subsection{Sharp Logarithmic Sobolev Inequality}\label{sharplog}

To transfer the sharp Euclidean logarithmic Sobolev inequality to the sphere, we first record the behavior of the \(L^2\)-norm, the entropy functional, and the logarithmic energy under the endpoint stereographic pullback.

\begin{lemma}\label{confcore}
    Let $(\mathbb S^N,g)$ be the round sphere and let
$\sigma:\mathbb S^N\setminus\{-e_{N+1}\}\to\mathbb R^N$ be the stereographic projection.
Set
\[
\phi(x):=\frac{2}{1+|x|^2},\quad x\in\mathbb R^N.
\]
Define the endpoint pullback
\[
(\mathcal T_0 u)(x):=\phi(x)^{\frac N2}\,u(\sigma^{-1}(x)).
\]
Then, for every $u\in C^\infty(\mathbb S^N)\setminus\{0\}$ and $v:=\mathcal T_0 u$, we have
\begin{enumerate}
\item[(i)] $\|v\|_{L^2(\mathbb R^N)}=\|u\|_{L^2(\mathbb S^N)}$;
\item[(ii)] 
\[
\int_{\mathbb R^N}\frac{v^2}{\|v\|_2^2}\ln\!\left(\frac{v^2}{\|v\|_2^2}\right)\,dx
=
\int_{\mathbb S^N}\frac{u^2}{\|u\|_2^2}\ln\!\left(\frac{u^2}{\|u\|_2^2}\right)\,dV_g
+N\int_{\mathbb S^N}\frac{u^2}{\|u\|_2^2}\ln\phi\,dV_g;
\]
\item[(iii)] The logarithmic energies satisfy
\[
\frac{\langle v,(-\Delta)^{\ln}v\rangle}{\|v\|_2^2}
=
\frac{\int_{\mathbb S^N}u\,\mathscr P_g^{\ln}u\,dV_g}{\|u\|_2^2}
+2\int_{\mathbb S^N}\frac{u^2}{\|u\|_2^2}\ln\phi\,dV_g.
\]
\end{enumerate}
\end{lemma}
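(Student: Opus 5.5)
The plan is to use the change of variables $z=\sigma^{-1}(x)$, whose volume element is $dV_g(z)=\phi(x)^N\,dx$ because $g=\phi^2\delta$ in stereographic coordinates. For (i) this is immediate: $v^2\,dx=\phi^N u(\sigma^{-1}x)^2\,dx=u^2\,dV_g$. The south pole is a null set, so nothing is lost there.

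For (ii) we write $v^2/\|v\|_2^2=\phi^N\,u^2/\|u\|_2^2$, using (i). Taking logarithms gives
\[
\ln\frac{v^2}{\|v\|_2^2}=\ln\frac{u^2}{\|u\|_2^2}+N\ln\phi .
\]
Multiplying by $v^2/\|v\|_2^2\,dx=u^2/\|u\|_2^2\,dV_g$ and integrating yields (ii). Integrability near the pole is not an issue, since $u^2\ln\phi$ has only a logarithmic singularity against a bounded density; in Euclidean terms $\phi^N|\ln\phi|$ is integrable.

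For (iii) we work at the level of $t>0$ and differentiate at $t=0$, as in the proof of Proposition~\ref{prop:sphere-RN-frac-log}. The fractional intertwining \eqref{eq:frac-intertwining-t} gives
\[
\mathcal T_t[\mathscr P_g^t u]=\phi^{-2t}(-\Delta)^t v_t .
\]
Multiplying by $v_t=\phi^{\frac{N-2t}{2}}u\circ\sigma^{-1}$ and integrating over $\mathbb R^N$, the weights combine to $\phi^{N}$, so
\[
\langle v_t,(-\Delta)^t v_t\rangle=\int_{\mathbb S^N}u\,\mathscr P_g^t u\,dV_g
\quad\text{for all small } t>0 .
\]
The same identity at $t=0$ is trivial, since both operators are the identity and (i) applies. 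We then differentiate both sides at $t=0^+$.

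On the sphere the derivative is $\int u\,\mathscr P^{\ln}_g u\,dV_g$. This follows from \eqref{eq:log-def}, justified by uniform convergence of the difference quotients for smooth $u$, arguing as in Theorem~\ref{prop:frac-log-kernel}(ii)–(iii).

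On $\mathbb R^N$ we pass to Fourier variables: $\langle v_t,(-\Delta)^t v_t\rangle=\int|\xi|^{2t}|\widehat{v_t}|^2\,d\xi$. Its $t$-derivative at $0$ is
\[
\langle v,(-\Delta)^{\ln}v\rangle+2\langle \partial_t v_t|_{t=0},v\rangle,
\qquad \partial_t v_t|_{t=0}=-\ln\phi\,v .
\]
This gives the correction $-2\int v^2\ln\phi\,dx=-2\int u^2\ln\phi\,dV_g$. Equating the two derivatives, rearranging and dividing by $\|u\|_2^2=\|v\|_2^2$ yields (iii), including the sign of the $\ln\phi$ term.

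The main obstacle is justifying the differentiation on the Euclidean side. Here $v_t$ decays only like $|x|^{-(N-2t)}$, so we must check three things:
\begin{itemize}
\item $t\mapsto\int|\xi|^{2t}|\widehat{v_t}|^2$ is differentiable at $0^+$;
\item $\int\ln|\xi|^2|\widehat v|^2$ is finite and agrees with $\langle v,(-\Delta)^{\ln}v\rangle$ from the kernel formula of \cite{CW19};
\item $\ln\phi\,v\in L^2$.
\end{itemize}
For the first two, $v\in L^2$ and $\widehat v$ is bounded near $0$, because $v$ lies in every $L^p$ with $p>1$ and has mild decay. So $|\xi|^{2t}\ln|\xi|$ is dominated near $\xi=0$ by $|\ln|\xi||$ times a bounded function. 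At infinity we use the smoothness of $v$. Differentiating $\widehat{v_t}$ in $t$ is handled by dominated convergence, using $|\ln\phi|\phi^{N/2-t}\le C(1+|x|)^{-N+2t+\epsilon}$, which stays in $L^2$ for small $t$. If this route proves too delicate, I would instead expand $\langle v_t,(-\Delta)^tv_t\rangle$ via the singular-integral representation and use the chordal identity
\[
|\sigma^{-1}x-\sigma^{-1}y|=\phi(x)^{1/2}\phi(y)^{1/2}|x-y| .
\]
This converts the Euclidean double integral of $(v(x)-v(y))^2/|x-y|^N$ into the spherical one plus explicit $\ln\phi$ terms, directly at $t=0$.
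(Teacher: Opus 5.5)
Parts (i) and (ii) coincide with the paper's argument. For (iii) you take a genuinely different route.

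The paper imports the pointwise endpoint intertwining $\mathcal T_0[\mathscr P_g^{\ln}u]=(-\Delta)^{\ln}v-2(\ln\phi)v$ from \cite[Proposition 1.3]{fernandez2025conformal}, pairs it with $v$, and uses $v^2\,dx=u^2\,dV_g$; all the analysis is outsourced to that reference. You instead obtain only the weak (quadratic-form) version. You do this by differentiating the conformal invariance $\langle v_t,(-\Delta)^tv_t\rangle=\int_{\mathbb S^N}u\,\mathscr P_g^tu\,dV_g$ at $t=0^+$. This is the mechanism the paper uses for Proposition~\ref{prop:sphere-RN-frac-log}, applied at the endpoint and in integrated form, and your sign bookkeeping reproduces the paper's formula. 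Your route is self-contained: it needs only the $t>0$ energy identity, and only the quadratic form, which is all (iii) asks for. The price is the Fourier-side justification.

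On that side, differentiability of $t\mapsto\int|\xi|^{2t}|\widehat{v_t}|^2\,d\xi$ at $0^+$ comes for free, since this function equals the spherical side. The only real work is identifying the derivative.

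One justification you give is false: $\widehat v$ is not bounded near $\xi=0$ in general. If $u(-e_{N+1})\neq0$, then $v\sim c|x|^{-N}\notin L^1$. For $u\equiv1$, Lemma~\ref{lem:fourier-bubble} at $s=0$ gives $\widehat v\propto K_0(|\xi|)\sim-\ln|\xi|$. Similarly, $\widehat{v_t}$ contains pieces of size $t^{-1}|\xi|^{-2t}$ that cancel only in the limit.

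The fix is Hausdorff--Young. For a fixed $p\in(1,2)$, both $v_t$ and $\partial_tv_t=-(\ln\phi)v_t$ are bounded in $L^p(\mathbb R^N)$ uniformly for small $t$. Hence $\widehat{v_t}$ and $\partial_t\widehat{v_t}$ are bounded in $L^{p'}$ with $p'>2$. Combined with $|\ln|\xi||\in L^r_{\rm loc}$ for every $r<\infty$, this gives the uniform integrability near $\xi=0$. The rapid decay of $\widehat{v_t}$, which holds because all derivatives of $v_t$ of order at least one are integrable, handles large $|\xi|$.

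Your fallback via the chordal identity is less direct than you suggest. Expanding $\bigl(\phi(x)^{N/2}U(x)-\phi(y)^{N/2}U(y)\bigr)^2$ produces cross terms that require knowing $(-\Delta)^{\ln}\phi^{N/2}$. In addition, the truncation $\mathbf 1_{B_1(x)}$ in the Chen--Weth kernel is not preserved by $\sigma$.
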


\begin{proof}
\textbf{(i).} Using $dV_g=\phi^N dx$ and $v^2=\phi^N(u\circ\sigma^{-1})^2$, we obtain
\[
\|v\|_{L^2(\mathbb R^N)}^2
=\int_{\mathbb R^N}\phi^N\,u(\sigma^{-1}(x))^2\,dx
=\int_{\mathbb S^N}u^2\,dV_g
=\|u\|_{L^2(\mathbb S^N)}^2.
\]

\textbf{(ii).} Note that
\[
\ln\!\left(\frac{v^2}{\|v\|_2^2}\right)
=\ln\!\left(\frac{(u\circ\sigma^{-1})^2}{\|u\|_2^2}\right)+N\ln\phi
\qquad\text{on }\mathbb R^N,
\]
hence
\[
\int_{\mathbb R^N}\frac{v^2}{\|v\|_2^2}\ln\!\left(\frac{v^2}{\|v\|_2^2}\right)\,dx
=
\int_{\mathbb S^N}\frac{u^2}{\|u\|_2^2}\ln\!\left(\frac{u^2}{\|u\|_2^2}\right)\,dV_g
+N\int_{\mathbb S^N}\frac{u^2}{\|u\|_2^2}\ln\phi\,dV_g.
\]

\textbf{(iii).} By \cite[Proposition 1.3]{fernandez2025conformal}, we konw that
\[\mathcal T_0\!\big[\mathscr P_g^{\ln}u\big]
=
(-\Delta)^{\ln}v-2(\ln\phi)\,v
\qquad\text{in }\mathbb R^N.\]
Thus, we obtain
\[
\int_{\mathbb S^N}u\,\mathscr P_g^{\ln}u\,dV_g
=
\int_{\mathbb R^N} v\Bigl((-\Delta)^{\ln}v-2(\ln\phi)\,v\Bigr)\,dx
=
\langle v,(-\Delta)^{\ln}v\rangle-2\int_{\mathbb R^N}(\ln\phi)\,v^2\,dx.
\]
Dividing by $\|v\|_2^2=\|u\|_2^2$ and using $v^2dx=u^2dV_g$ gives the claimed relation in (iii).
\end{proof}

Fix $s\in [0,1)$. We recall the sharp fractional Sobolev inequality on $\mathbb R^N$ (see \cite[Theorem 1.1]{cotsiolis2004best})
\begin{equation}\label{eq:Sobolev-RN-frac-kappa}
\|v\|_{L^{2_s^*}(\mathbb R^N)}^2
\le \kappa_{N,s}\,\|v\|_{\dot H^s(\mathbb R^N)}^2,
\quad v\in \dot H^s(\mathbb R^N),
\end{equation}
where $2_s^*=\frac{2N}{N-2s}$ and
\[
\|v\|_{\dot H^s(\mathbb R^N)}^2
:=\int_{\mathbb R^N}|\xi|^{2s}\,|\widehat v(\xi)|^2\,d\xi,
\]
and the Fourier transform is defined by
\[
\widehat{f}(\xi):=(2\pi)^{-N/2}\int_{\mathbb R^N}e^{-ix\cdot \xi}f(x)\,dx.
\]
The sharp constant is
\begin{equation}\label{eq:kappaNs}
\kappa_{N,s}
=2^{-2s}\pi^{-s}\,
\frac{\Gamma\!\bigl(\frac{N-2s}{2}\bigr)}{\Gamma\!\bigl(\frac{N+2s}{2}\bigr)}
\left(\frac{\Gamma(N)}{\Gamma(\frac N2)}\right)^{\!\frac{2s}{N}}.
\end{equation}

\begin{proof}[\textbf{Proof of Proposition \ref{prop:logS-beckner-equivalence}.}]
For a fixed nonzero $v\in C_c^{\infty}(\mathbb R^N)$, define
\[
F_v(s):=\kappa_{N,s}\,\|v\|_{\dot H^s(\mathbb R^N)}^2-\|v\|_{L^{2_s^*}(\mathbb R^N)}^2.
\]
Then \eqref{eq:Sobolev-RN-frac-kappa} is exactly the statement that $F_v(s)\ge 0$ for all
$s\in [0,1)$, while $F_v(0)=0$ since $\kappa_{N,0}=1$, $2_0^*=2$, and $\|v\|_{\dot H^0}^2=\|v\|_2^2$.
Hence the right-hand derivative at $0$ satisfies
\[
F_v'(0^+):=\left.\frac{d}{ds}\right|_{s=0^+}F_v(s)\ge\;0.
\]

We now compute $F_v'(0^+)$ using the first-order expansions as $s\to0^+$.
First, $\kappa_{N,s}=1+s\,a_N+o(s)$ with
\begin{equation}\label{eq:aN}
a_N
=\left.\frac{d}{ds}\kappa_{N,s}\right|_{s=0}
=\frac{2}{N}\ln\!\left(\frac{\Gamma(N)}{\Gamma(\frac N2)}\right)-\ln(4\pi)-2\psi\!\left(\frac N2\right).
\end{equation}
Next,
\[
\|v\|_{\dot H^s(\mathbb R^N)}^2
=\|v\|_2^2+s\,\langle v,(-\Delta)^{\ln} v\rangle+o(s),
\qquad
(-\Delta)^{\ln}:=\left.\frac{d}{ds}(-\Delta)^s\right|_{s=0},
\]
and
\[
\|v\|_{L^{2_s^*}(\mathbb R^N)}^2
=\|v\|_2^2+\frac{2s}{N}\Bigl(\int_{\mathbb R^N}|v|^2\ln|v|^2\,dx-\|v\|_2^2\ln\|v\|_2^2\Bigr)+o(s).
\]
Substituting these expansions into $F_v(s)$ and comparing the coefficients of $s$ gives
\[
0\le F_v'(0^+)
=a_N\|v\|_2^2+\langle v,(-\Delta)^{\ln} v\rangle
-\frac{2}{N}\Bigl(\int_{\mathbb R^N}|v|^2\ln|v|^2\,dx-\|v\|_2^2\ln\|v\|_2^2\Bigr).
\]
Rearranging yields the sharp Euclidean logarithmic inequality gives the
entropy form
\begin{equation}\label{eq:logS-RN-entropy}
\frac{2}{N}\int_{\mathbb{R}^N}\frac{|v|^2}{\|v\|_2^2}
\ln\!\left(\frac{|v|^2}{\|v\|_2^2}\right)\,dx
\le a_N+\frac{\langle v,(-\Delta)^{\ln} v\rangle}{\|v\|_2^2}
\quad{\rm for}\ \,  v\in C_c^{\infty}(\mathbb{R}^N)\setminus\{0\}.
\end{equation}

We now transfer \eqref{eq:logS-RN-entropy} to the sphere via stereographic projection and the
conformal pullback. By Lemma \ref{confcore}, we obtain the sharp logarithmic inequality on $\mathbb{S}^N$:
\begin{equation}\label{eq:logS-SN}
\frac{2}{N}\int_{\mathbb{S}^N}\frac{|u|^2}{\|u\|_2^2}
\ln\!\left(\frac{|u|^2}{\|u\|_2^2}\right)\,dV_g
\le a_N+\frac{\int_{\mathbb{S}^N}u\,\mathscr P_g^{\ln}u\,dV_g}{\|u\|_2^2}
\quad{\rm for}\ \, u\in C^\infty(\mathbb{S}^N)\setminus\{0\},
\end{equation}
where $a_N$ is the explicit constant in \eqref{eq:aN}.  Using the representation \eqref{eq:log-def} we have the energy identity
\begin{equation}\label{eq:Pln-energy-kernel}
\int_{\mathbb S^N}u\,\mathscr P_g^{\ln}u\,dV_g
=\frac{c_N}{2}\iint_{\mathbb S^N\times\mathbb S^N}
\frac{(u(z)-u(\zeta))^2}{|z-\zeta|^N}\,dV_g(z)\,dV_g(\zeta)
+A_N\int_{\mathbb S^N}u^2\,dV_g,
\end{equation}
where
\[
c_N=\pi^{-N/2}\Gamma\!\left(\frac N2\right),
\qquad
A_N=2\,\psi\!\left(\frac N2\right).
\]

Substituting \eqref{eq:Pln-energy-kernel} into \eqref{eq:logS-SN} yields
\begin{align*}
&\frac{2}{N}\left(
\int_{\mathbb S^N}|u|^2\ln|u|^2\,dV_g
-\|u\|_2^2\ln\|u\|_2^2
\right)\notag\\
&\qquad\le
(a_N+A_N)\,\|u\|_2^2
+\frac{c_N}{2}\iint_{\mathbb S^N\times\mathbb S^N}
\frac{(u(z)-u(\zeta))^2}{|z-\zeta|^N}\,dV_g(z)\,dV_g(\zeta).\label{eq:logS-substituted}
\end{align*}
Using \eqref{eq:aN} and $A_N=2\psi(\frac N2)$, we obtain
\[\begin{aligned}
&\frac{2}{N}\left(
\int_{\mathbb S^N}|u|^2\ln|u|^2\,dV_g
-\|u\|_2^2\ln\|u\|_2^2
\right)\notag\\
&\qquad\le
\Biggl[\frac{2}{N}\ln\!\left(\frac{\Gamma(N)}{\Gamma(\frac N2)}\right)-\ln(4\pi)\Biggr]\|u\|_2^2
+\frac{c_N}{2}\iint_{\mathbb S^N\times\mathbb S^N}
\frac{(u(z)-u(\zeta))^2}{|z-\zeta|^N}\,dV_g(z)\,dV_g(\zeta).\label{eq:logS-simplified}
\end{aligned}\]
Recall the surface measure of the unit sphere $|\mathbb S^N|
=\frac{2\pi^{\frac{N+1}{2}}}{\Gamma(\frac{N+1}{2})}.$
Adding and subtracting $\frac{2}{N}\|u\|_2^2\ln|\mathbb S^N|$ on the left-hand side of
\eqref{eq:logS-simplified}, we obtain the equivalent form
\begin{align}
\frac{2}{N}\int_{\mathbb S^N}|u|^2
\ln\!\left(\frac{|u|^2\,|\mathbb S^N|}{\|u\|_2^2}\right)\,dV_g
&\le
\frac{c_N}{2}\iint_{\mathbb S^N\times\mathbb S^N}
\frac{(u(z)-u(\zeta))^2}{|z-\zeta|^N}\,dV_g(z)\,dV_g(\zeta)\notag\\
&\quad+
\Biggl[\frac{2}{N}\ln\!\left(\frac{\Gamma(N)}{\Gamma(\frac N2)}\right)
-\ln(4\pi)
+\frac{2}{N}\ln|\mathbb S^N|
\Biggr]\|u\|_2^2.\label{eq:logS-with-area}
\end{align}

If we normalize $\|u\|_2^2=1$, then the last term in \eqref{eq:logS-with-area} is an additive
constant, then
we arrive at the celebrated  Beckner-type inequality
\[\iint_{\mathbb S^N\times\mathbb S^N}
\frac{(u(z)-u(\zeta))^2}{|z-\zeta|^N}\,dV_g(z)\,dV_g(\zeta)
\ge
C_N\int_{\mathbb S^N}|u|^2
\ln\!\left(|u|^2\,|\mathbb S^N|\right)\,dV_g,
\qquad \|u\|_2=1,\]
where
\[C_N:=\frac{4}{N}\,\frac{\pi^{N/2}}{\Gamma(\frac N2)}.\]
Equivalently, 
\[\iint_{\mathbb S^N\times\mathbb S^N}
\frac{(u(z)-u(\zeta))^2}{|z-\zeta|^N}\,dV_g(z)\,dV_g(\zeta)
\ge
C_N\int_{\mathbb S^N}|u|^2
\ln\!\left(\frac{|u|^2\,|\mathbb S^N|}{\|u\|_2^2}\right)\,dV_g,
\quad{\rm for}\ \,  u\in C^\infty(\mathbb S^N)\setminus \{0\} .\]
\end{proof}

\subsection{Failure of Sharp Fractional--Logarithmic Sobolev Inequality}

We first derive a fractional--logarithmic identity by differentiating the sharp fractional Sobolev
inequality along the family of extremals.

Let $N\ge1$ and $s\in(0,1)$ with $N>2s$, and set $p(s):=2_s^*=\frac{2N}{N-2s}$.
Let $\kappa_{N,s}$ be the sharp constant in \eqref{eq:Sobolev-RN-frac-kappa}. Equivalently,
\begin{equation}\label{eq:kappa-variational}
\frac{1}{\kappa_{N,s}}
=\inf_{v\in \dot H^s(\mathbb R^N)\setminus\{0\}}
\frac{\|v\|_{\dot H^s(\mathbb R^N)}^2}{\|v\|_{L^{p(s)}(\mathbb R^N)}^2}.
\end{equation}
Moreover, equality in  \eqref{eq:kappa-variational}  is attained precisely by the fractional Talenti bubbles,
\[
u_s(x):
= \left(\frac{1}{1+|x|^2}\right)^{\frac{N-2s}{2}}\ \ {\rm for}\ \,  x\in \R^N\quad{\rm with}\ \,  s\in (0,1)
\]
i.e.
\begin{equation}\label{eq:kappa-attained}
  \frac{1}{\kappa_{N,s}}
=
\frac{\|u_s\|_{\dot H^s(\mathbb R^N)}^2}{\|u_s\|_{L^{p(s)}(\mathbb R^N)}^2}.  
\end{equation}

We recall that \(K_\nu\) denotes the modified Bessel function of the second kind, see \cite[p.~415]{aronszajn1961theory}.
We shall use the following asymptotic behaviors:
for \(\nu>0\),
\[
K_\nu(r)\sim 2^{\nu-1}\Gamma(\nu)\,r^{-\nu}
\qquad \text{as } r\downarrow 0,
\]
while
\[
K_\nu(r)\sim \sqrt{\frac{\pi}{2r}}\,e^{-r}
\qquad \text{as } r\to\infty.
\]

\begin{lemma}\label{lem:fourier-bubble}
Let
\begin{equation}\label{extremal}
    u_s(x):=(1+|x|^2)^{-\frac{N-2s}{2}},
\qquad x\in\mathbb R^N,\quad 0<s<1.
\end{equation}
Then
\[
\widehat{u_s}(\xi)
=
\frac{2^{\,1-\frac{N-2s}{2}}}{\Gamma\!\left(\frac{N-2s}{2}\right)}
\,|\xi|^{-s}K_s(|\xi|),
\qquad \xi\in\mathbb R^N\setminus\{0\},
\]
where \(K_s\) denotes the modified Bessel function of the second kind.
\end{lemma}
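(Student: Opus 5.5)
The plan is to compute the Fourier transform of the radial function $u_s(x)=(1+|x|^2)^{-\alpha}$, $\alpha:=\frac{N-2s}{2}>0$, with the subordination (Gamma-function) representation of the power. For $\alpha>0$ we have
\[
(1+|x|^2)^{-\alpha}=\frac{1}{\Gamma(\alpha)}\int_0^\infty t^{\alpha-1}e^{-t}e^{-t|x|^2}\,dt .
\]
This writes $u_s$ as a superposition of Gaussians, and the normalization $\widehat f(\xi)=(2\pi)^{-N/2}\int e^{-ix\cdot\xi}f\,dx$ gives the Gaussian transform
\[
\widehat{e^{-t|\cdot|^2}}(\xi)=(2t)^{-N/2}e^{-|\xi|^2/(4t)} .
\]

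\textbf{Step 1: reduce to a one-dimensional integral.} Inserting the subordination formula and using Fubini, which is justified for $\alpha<N/2$ only in the distributional/$L^1+L^2$ sense, we obtain
\[
\widehat{u_s}(\xi)=\frac{2^{-N/2}}{\Gamma(\alpha)}\int_0^\infty t^{\alpha-1-N/2}\,e^{-t-|\xi|^2/(4t)}\,dt .
\]
Here $\alpha-N/2=-s$, so the exponent is $-s-1$.

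\textbf{Step 2: identify the Bessel function.} We use the standard integral representation
\[
K_\nu(r)=\frac12\Bigl(\frac r2\Bigr)^{\nu}\int_0^\infty \tau^{-\nu-1}e^{-\tau-r^2/(4\tau)}\,d\tau ,
\]
obtained from $K_\nu(r)=\frac12\int_0^\infty \tau^{\nu-1}e^{-\frac r2(\tau+\tau^{-1})}d\tau$ by rescaling. Taking $\nu=s$ and $r=|\xi|$ gives
\[
\int_0^\infty t^{-s-1}e^{-t-|\xi|^2/(4t)}\,dt=2^{1+s}|\xi|^{-s}K_s(|\xi|),
\]
so that
\[
\widehat{u_s}(\xi)=\frac{2^{1+s-N/2}}{\Gamma(\alpha)}|\xi|^{-s}K_s(|\xi|).
\]
Since $1+s-N/2=1-\frac{N-2s}{2}$, this is exactly the claimed constant.

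\textbf{Step 3: justification, the main obstacle.} The only delicate point is legitimacy of exchanging integrals, since $u_s\notin L^1(\mathbb R^N)$ when $N-2s\le N$, i.e.\ always; $u_s$ decays like $|x|^{-(N-2s)}$. I will handle it by pairing with a test function $\varphi\in\mathcal S(\mathbb R^N)$. Plancherel for Gaussians and Tonelli apply because
\[
\int_0^\infty t^{\alpha-1}e^{-t}\int(2t)^{-N/2}e^{-|\xi|^2/4t}|\varphi(\xi)|\,d\xi\,dt<\infty .
\]
The inner integral is bounded by $C\min(1,t^{-N/2})$, and $\alpha-1-N/2=-s-1$ together with $e^{-t}$ gives integrability at both ends. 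Indeed, near $t=0$ the inner integral is $O(1)$ while $t^{\alpha-1}$ is integrable. Hence the distributional Fourier transform equals the locally integrable function found above for $\xi\neq0$. This is consistent with the stated asymptotics: $|\xi|^{-2s}$ near $0$ is integrable since $2s<N$, and there is exponential decay at infinity.
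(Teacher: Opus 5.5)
Your proof is correct, and the constant checks out: with $\alpha=\frac{N-2s}{2}$, the Gaussian transform $(2t)^{-N/2}e^{-|\xi|^2/(4t)}$ together with $K_\nu(r)=\frac12(\frac r2)^{\nu}\int_0^\infty\tau^{-\nu-1}e^{-\tau-r^2/(4\tau)}\,d\tau$ gives exactly $2^{1+s-N/2}/\Gamma(\alpha)$.

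The paper takes a shorter route that leans on a citation. It quotes from Aronszajn--Smith the Bessel potential kernel $G_{N-2s}$, whose Fourier transform is $(2\pi)^{-N/2}(1+|\xi|^2)^{-(N-2s)/2}$. It then reads off $\widehat{u_s}=(2\pi)^{N/2}G_{N-2s}$ by Fourier inversion and evenness.

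Your subordination computation is essentially the classical derivation of that kernel formula, so you reprove the cited fact rather than import it. What this buys you is that the meaning of $\widehat{u_s}$ becomes explicit. Note that $(1+|\xi|^2)^{-(N-2s)/2}\notin L^1$ and $u_s\notin L^1$; moreover $u_s\notin L^2$ when $N\le 4s$. So the paper's inversion step is only valid in $\mathcal S'$, which it leaves implicit. Your Step 3 shows directly that the tempered-distribution transform is the locally integrable function in the statement.

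Two small blemishes do not affect validity:
\begin{itemize}
\item The Step 1 remark that Fubini works in the ``$L^1+L^2$ sense'' is not right in general. For $N\le 4s$ the decay $|x|^{-(N-2s)}$ puts $u_s$ outside $L^1+L^2$. Your $\mathcal S'$ pairing argument in Step 3 supersedes this remark.
\item The sentence saying the exponent $-s-1$ ``together with $e^{-t}$ gives integrability at both ends'' is misleading, since $t^{-s-1}$ is not integrable at $0$. The correct reasoning near $t=0$ is the one you then give: the inner integral is at most $(2\pi)^{N/2}\|\varphi\|_\infty$, while $t^{\alpha-1}$ is integrable.
\end{itemize}
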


\begin{proof}
Set
\[
\alpha:=N-2s.
\]
Then \(0<\alpha<N\) and
\[
u_s(x)=(1+|x|^2)^{-\alpha/2}.
\]
By \cite[p.~414]{aronszajn1961theory}, the Bessel kernel
\[
G_\alpha(x)
=
\frac{1}{2^{\frac{N+\alpha-2}{2}}\pi^{N/2}\Gamma(\alpha/2)}
\,|x|^{\frac{\alpha-N}{2}}K_{\frac{N-\alpha}{2}}(|x|)
\]
satisfies
\[
\widehat{G_\alpha}(\xi)=(2\pi)^{-N/2}(1+|\xi|^2)^{-\alpha/2}.
\]
Since \((1+|\xi|^2)^{-\alpha/2}\) is radial and even, Fourier inversion gives
\[
\cF((1+|x|^2)^{-\alpha/2})(\xi)=(2\pi)^{N/2}G_\alpha(\xi).
\]
Therefore,
\[
\widehat{u_s}(\xi)
=
(2\pi)^{N/2}
\frac{1}{2^{\frac{N+\alpha-2}{2}}\pi^{N/2}\Gamma(\alpha/2)}
\,|\xi|^{\frac{\alpha-N}{2}}K_{\frac{N-\alpha}{2}}(|\xi|)=\frac{2^{\,1-\frac{N-2s}{2}}}{\Gamma\!\left(\frac{N-2s}{2}\right)}
\,|\xi|^{-s}K_s(|\xi|).
\]
This completes the proof.
\end{proof}

As a consequence of Lemma \ref{lem:fourier-bubble}, we have
\[
|\xi|^{2s}\,|\widehat{u_s}(\xi)|^2
=
C_{N,s}^2\,K_s(|\xi|)^2,
\]
where
\[
C_{N,s}:=\frac{2^{\,1-\frac{N-2s}{2}}}{\Gamma\!\left(\frac{N-2s}{2}\right)}.
\]
Hence
\[
|\xi|^{2s}\,|\ln |\xi|^2|\,|\widehat{u_s}(\xi)|^2
=
C_{N,s}^2\,|\ln |\xi|^2|\,K_s(|\xi|)^2.
\]

We claim that
\[
\int_{\mathbb R^N} |\xi|^{2s}\,|\ln |\xi|^2|\,|\widehat{u_s}(\xi)|^2\,d\xi<+\infty.
\]
Indeed, by radial symmetry it is enough to show
\[
\int_0^\infty r^{2s+N-1} |\ln r^2|\, K_s(r)^2\,dr<+\infty.
\]

As \(r\downarrow 0\), since \(0<s<1\), we have
\[
K_s(r)\sim r^{-s},
\]
and therefore
\[
r^{N-1} |\ln r^2|\,K_s(r)^2
\sim r^{N-1-2s}|\ln r|.
\]
Since \(N-2s>0\), it follows that
\[
\int_0^1 r^{N-1-2s}|\ln r|\,dr<\infty.
\]

As \(r\to\infty\), we use
\[
K_s(r)\sim \sqrt{\frac{\pi}{2r}}\,e^{-r},
\]
so that
\[
r^{N-1} |\ln r^2|\,K_s(r)^2
\sim r^{N-2}(\ln r)e^{-2r},
\]
which is integrable on \((1,\infty)\).

Thus
\[
\int_{\mathbb R^N} |\xi|^{2s}\,|\ln |\xi|^2|\,|\widehat{u_s}(\xi)|^2\,d\xi<\infty.
\]
In particular,
\[
\int_{\mathbb R^N} |\xi|^{2s}\,\ln |\xi|^2\,|\widehat{u_s}(\xi)|^2\,d\xi
\]
is well defined and finite.

Define the two normalized quantities
\[
\mathcal E_s(v):=\frac{\int_{\mathbb R^N}|\xi|^{2s}\,|\widehat v(\xi)|^2\,d\xi}{\|v\|_{L^{p(s)}}^2},
\qquad
\mathcal L_s(v):=\frac{\int_{\mathbb R^N}|\xi|^{2s}\ln|\xi|^2\,|\widehat v(\xi)|^2\,d\xi}{\|v\|_{L^{p(s)}}^2}.
\]

\begin{proof}[\textbf{Proof of Proposition \ref{prop:fraclog-identity-variational}.}]
Let
\[
A(s):=\int_{\mathbb R^N}|\xi|^{2s}\,|\widehat u_s(\xi)|^2\,d\xi,
\qquad
B(s):=\|u_s\|_{L^{p(s)}}^2,
\]
so that
\[
\mathcal E_s(u_s)=\frac{A(s)}{B(s)}.
\]
By \eqref{eq:kappa-variational}--\eqref{eq:kappa-attained}, we have
\[
\mathcal E_s(u_s)=\frac{1}{\kappa_{N,s}}.
\]
Differentiating with respect to \(s\), we obtain
\begin{equation}\label{eq:diff-Es}
\frac{d}{ds}\mathcal E_s(u_s)
=
-\frac{\kappa'_{N,s}}{\kappa_{N,s}^2}.
\end{equation}

Since
\[
\mathcal E_s(u_s)=\frac{A(s)}{B(s)},
\]
the quotient rule yields
\begin{equation}\label{eq:quotient-derivative-new}
\frac{d}{ds}\mathcal E_s(u_s)
=
\frac{A'(s)}{B(s)}-\frac{A(s)B'(s)}{B(s)^2}
=
\mathcal E_s(u_s)\left(\frac{A'(s)}{A(s)}-\frac{B'(s)}{B(s)}\right).
\end{equation}

\medskip
\noindent\textbf{Step 1: Derivative of the numerator.}
Since
\[
A(s)=\int_{\mathbb R^N}|\xi|^{2s}\,|\widehat u_s(\xi)|^2\,d\xi,
\]
we may differentiate under the integral sign to obtain
\begin{equation}\label{eq:Aprime-fourier}
A'(s)
=
\int_{\mathbb R^N} |\xi|^{2s}\ln|\xi|^2\,|\widehat u_s(\xi)|^2\,d\xi
+
2\int_{\mathbb R^N} |\xi|^{2s}\,\widehat u_s(\xi)\,\partial_s\widehat u_s(\xi)\,d\xi.
\end{equation}
Here the differentiation under the integral sign is justified by the explicit representation of \(\widehat u_s\) together with the standard asymptotic bounds for \(K_s\); we omit the verification. By the definition of \(\mathcal L_s(u_s)\), this becomes
\begin{equation}\label{eq:Aprime-fourier-L}
A'(s)
=
\mathcal L_s(u_s)\,\|u_s\|_{L^{p(s)}}^2
+
2\int_{\mathbb R^N} |\xi|^{2s}\,\widehat u_s(\xi)\,\partial_s\widehat u_s(\xi)\,d\xi.
\end{equation}

\medskip
\noindent\textbf{Step 2: Euler--Lagrange equation.}
Since \(u_s\) is an extremal for \(\mathcal E_s\), there exists a Lagrange multiplier
\(\Lambda_s\in\mathbb R\) such that
\begin{equation}\label{eq:EL-fixed-s-new}
(-\Delta)^s u_s=\Lambda_s\,|u_s|^{p(s)-2}u_s
\quad\text{in the weak sense on }\mathbb R^N.
\end{equation}
Testing \eqref{eq:EL-fixed-s-new} against \(u_s\), we obtain
\begin{equation}\label{eq:test-us-new}
A(s)
=
\int_{\mathbb R^N} |\xi|^{2s}|\widehat u_s(\xi)|^2\,d\xi
=
\Lambda_s\int_{\mathbb R^N}|u_s|^{p(s)}\,dx.
\end{equation}
Testing \eqref{eq:EL-fixed-s-new} against \(\dot u_s:=\partial_s u_s\), and using Plancherel, gives
\begin{equation}\label{eq:test-dot-new}
\int_{\mathbb R^N} |\xi|^{2s}\,\widehat u_s(\xi)\,\partial_s\widehat u_s(\xi)\,d\xi
=
\Lambda_s\int_{\mathbb R^N}|u_s|^{p(s)-2}u_s\,\dot u_s\,dx.
\end{equation}

\medskip
\noindent\textbf{Step 3: Derivative of the denominator.}
Let
\[
I(s):=\int_{\mathbb R^N}|u_s|^{p(s)}\,dx,
\qquad\text{so that}\qquad
B(s)=I(s)^{\frac{2}{p(s)}}.
\]
Since
\[
|u_s(x)|^{p(s)}
=
\left((1+|x|^2)^{-\frac{N-2s}{2}}\right)^{\frac{2N}{N-2s}}
=
(1+|x|^2)^{-N},
\]
the quantity \(I(s)\) is independent of \(s\). Hence
\[
I'(s)=0
\qquad\text{and}\qquad
\frac{B'(s)}{B(s)}=-\frac{2p'(s)}{p(s)^2}\ln I(s).
\]
Moreover,
\[
\Ent_{p(s)}(u_s)
=
\frac{\int_{\mathbb R^N}|u_s|^{p(s)}\ln |u_s|^{p(s)}\,dx}{I(s)}-\ln I(s).
\]
Using again that \(|u_s|^{p(s)}=(1+|x|^2)^{-N}\), we see that \(\Ent_{p(s)}(u_s)\) is independent of \(s\). Combining \eqref{eq:test-us-new} and \eqref{eq:test-dot-new}, we obtain
\[
\frac{2\int_{\mathbb R^N} |\xi|^{2s}\widehat u_s\,\partial_s\widehat u_s\,d\xi}{A(s)}
=
\frac{2\Lambda_s\int_{\mathbb R^N}|u_s|^{p(s)-2}u_s\,\dot u_s\,dx}{\Lambda_s\int_{\mathbb R^N}|u_s|^{p(s)}\,dx}
=
\frac{2}{I(s)}\int_{\mathbb R^N}|u_s|^{p(s)-2}u_s\,\dot u_s\,dx.
\]
Therefore, dividing \eqref{eq:Aprime-fourier-L} by \(A(s)\), we get
\[
\frac{A'(s)}{A(s)}
=
\frac{\mathcal L_s(u_s)}{\mathcal E_s(u_s)}
+
\frac{2}{I(s)}\int_{\mathbb R^N}|u_s|^{p(s)-2}u_s\,\dot u_s\,dx.
\]
On the other hand, differentiating
\[
I(s)=\int_{\mathbb R^N}|u_s|^{p(s)}\,dx
\]
and using \(I'(s)=0\), we find
\[
0
=
p'(s)\int_{\mathbb R^N}|u_s|^{p(s)}\ln|u_s|\,dx
+
p(s)\int_{\mathbb R^N}|u_s|^{p(s)-2}u_s\,\dot u_s\,dx.
\]
Hence
\[
\frac{2}{I(s)}\int_{\mathbb R^N}|u_s|^{p(s)-2}u_s\,\dot u_s\,dx
=
-\frac{2p'(s)}{p(s)I(s)}
\int_{\mathbb R^N}|u_s|^{p(s)}\ln|u_s|\,dx.
\]
Since
\[
\ln|u_s|^{p(s)}=p(s)\ln|u_s|,
\]
it follows that
\[
\frac{2}{I(s)}\int_{\mathbb R^N}|u_s|^{p(s)-2}u_s\,\dot u_s\,dx
=
-\frac{2p'(s)}{p(s)^2I(s)}
\int_{\mathbb R^N}|u_s|^{p(s)}\ln|u_s|^{p(s)}\,dx.
\]
Therefore
\[
\frac{A'(s)}{A(s)}
=
\frac{\mathcal L_s(u_s)}{\mathcal E_s(u_s)}
-
\frac{2p'(s)}{p(s)^2I(s)}
\int_{\mathbb R^N}|u_s|^{p(s)}\ln|u_s|^{p(s)}\,dx.
\]
Subtracting \(\frac{B'(s)}{B(s)}=-\frac{2p'(s)}{p(s)^2}\ln I(s)\), we arrive at
\begin{equation}\label{eq:AminusB-final}
\frac{A'(s)}{A(s)}-\frac{B'(s)}{B(s)}
=
\frac{\mathcal L_s(u_s)}{\mathcal E_s(u_s)}
-
\frac{2p'(s)}{p(s)^2}
\left(
\frac{\int_{\mathbb R^N}|u_s|^{p(s)}\ln|u_s|^{p(s)}\,dx}{I(s)}
-\ln I(s)
\right).
\end{equation}
By the definition of \(\Ent_{p(s)}(u_s)\), this becomes
\begin{equation}\label{eq:AminusB-final-entropy}
\frac{A'(s)}{A(s)}-\frac{B'(s)}{B(s)}
=
\frac{\mathcal L_s(u_s)}{\mathcal E_s(u_s)}
-
\frac{2p'(s)}{p(s)^2}\,\Ent_{p(s)}(u_s).
\end{equation}

\medskip
\noindent\textbf{Step 4: Conclusion.}
Since
\[
p(s)=\frac{2N}{N-2s},
\qquad
p'(s)=\frac{4N}{(N-2s)^2},
\]
we have
\[
\frac{2p'(s)}{p(s)^2}=\frac{2}{N}.
\]
Substituting \eqref{eq:AminusB-final-entropy} into \eqref{eq:quotient-derivative-new}, and using
\[
\mathcal E_s(u_s)=\frac1{\kappa_{N,s}},
\]
we obtain
\[
\frac{d}{ds}\mathcal E_s(u_s)
=
\frac1{\kappa_{N,s}}
\left(
\frac{\mathcal L_s(u_s)}{\mathcal E_s(u_s)}
-
\frac{2}{N}\Ent_{p(s)}(u_s)
\right).
\]
Combining this with \eqref{eq:diff-Es}, we arrive at
\[
-\frac{\kappa'_{N,s}}{\kappa_{N,s}^2}
=
\frac1{\kappa_{N,s}}
\left(
\frac{\mathcal L_s(u_s)}{\mathcal E_s(u_s)}
-
\frac{2}{N}\Ent_{p(s)}(u_s)
\right).
\]
Multiplying both sides by \(\kappa_{N,s}\) gives
\[
\frac{2}{N}\Ent_{p(s)}(u_s)
=
\frac{\kappa'_{N,s}}{\kappa_{N,s}}
+
\frac{\mathcal L_s(u_s)}{\mathcal E_s(u_s)},
\]
using \eqref{eq:kappa-attained}, we otain \eqref{eq:sharp-fraclog-identity-expanded}.
\end{proof}

\begin{rmk}\label{examd}
Formally letting \(s\to0^+\) in \eqref{eq:sharp-fraclog-identity-expanded}, one recovers the sharp logarithmic identity. Indeed, as \(s\to0^+\), one has
\[
p(s)\to2,
\qquad
u_s(x)=\left(\frac{1}{1+|x|^2}\right)^{\frac{N-2s}{2}}
\longrightarrow
v(x):=\left(\frac{1}{1+|x|^2}\right)^{\frac N2},
\]
and correspondingly \(\Ent_{p(s)}(u_s)\) tends formally to the \(L^2\)-entropy
\[
\Ent_2(v)
:=
\int_{\mathbb R^N}\frac{|v|^2}{\|v\|_2^2}
\ln\!\left(\frac{|v|^2}{\|v\|_2^2}\right)\,dx.
\]
Moreover, by the normalization of the sharp fractional Sobolev constant,
\[
\kappa_{N,0}=1,
\qquad
\kappa'_{N,0}=a_N.
\]
Since
\[
\mathcal E_0(v)
=
\frac{\int_{\mathbb R^N}|\widehat v(\xi)|^2\,d\xi}{\|v\|_2^2}
=1,
\]
identity \eqref{eq:sharp-fraclog-identity-expanded} reduces formally, at \(s=0\), to
\[
\frac{2}{N}\,\Ent_2(v)
=
a_N+\frac{\int_{\mathbb R^N}\ln|\xi|^2\,|\widehat v(\xi)|^2\,d\xi}{\|v\|_2^2}.
\]
This is precisely the logarithmic entropy identity corresponding to the equality case of \eqref{eq:logS-RN-entropy}. 
\end{rmk}

\begin{rmk}\label{rem:F-derivative-normalized}
The identity
\[
F_v'(s)
=\kappa'_{N,s}\,\int_{\mathbb R^N}|\xi|^{2s}\,|\widehat v(\xi)|^2\,d\xi
+\kappa_{N,s}\,\int_{\mathbb R^N}|\xi|^{2s}\ln|\xi|^2\,|\widehat v(\xi)|^2\,d\xi
-\frac{2}{N}\,\|v\|_{L^{p(s)}(\mathbb R^N)}^2\,\Ent_{p(s)}(v)
\]
can be rewritten in normalized form. Dividing by \(\|v\|_{L^{p(s)}(\mathbb R^N)}^2\), we obtain
\begin{equation}\label{eq:Fprime-normalized}
\frac{2}{N}\,\Ent_{p(s)}(v)
-\kappa'_{N,s}\,\frac{\int_{\mathbb R^N}|\xi|^{2s}\,|\widehat v(\xi)|^2\,d\xi}{\|v\|_{L^{p(s)}(\mathbb R^N)}^2}
-\kappa_{N,s}\,\frac{\int_{\mathbb R^N}|\xi|^{2s}\ln|\xi|^2\,|\widehat v(\xi)|^2\,d\xi}{\|v\|_{L^{p(s)}(\mathbb R^N)}^2}
=
-\frac{F_v'(s)}{\|v\|_{L^{p(s)}(\mathbb R^N)}^2}.
\end{equation}
In particular, fix \(s_0\in(0,1)\) and let \(v\) be an extremal for the sharp inequality \eqref{eq:Sobolev-RN-frac-kappa} at order \(s_0\). Then
\[
\frac{\int_{\mathbb R^N}|\xi|^{2s_0}|\widehat v(\xi)|^2\,d\xi}{\|v\|_{L^{p(s_0)}(\mathbb R^N)}^2}
=\frac1{\kappa_{N,s_0}}.
\]
Moreover, since \(v\) is fixed and \(F_v(s)\ge 0\) for all \(s\), while \(F_v(s_0)=0\), the point \(s_0\) is a minimum of \(F_v\). Hence
\[
F_v'(s_0)=0.
\]
Evaluating \eqref{eq:Fprime-normalized} at \(s=s_0\), we recover exactly the sharp fractional--logarithmic identity in Proposition~\ref{prop:fraclog-identity-variational}.
\end{rmk}

We next prove that the naive sharp fractional--logarithmic Sobolev inequality fails in the form stated in Theorem \ref{thm:naive-fraclog-fails}.

\begin{proof}[\textbf{Proof of Theorem \ref{thm:naive-fraclog-fails}.}]
By Remark~\ref{rem:F-derivative-normalized}, the inequality \eqref{eq:naive-fraclog-ineq} is equivalent to
\eqref{eq:Fprime-nonneg-thm}. We argue by contradiction and assume that
\[
F_v'(s)\ge 0
\qquad\text{for all }s\in(0,1),\ \forall\, v\in \dot H^s(\mathbb R^N)\setminus\{0\}.
\]

Fix \(s_0\in(0,1)\), and let \(u_{s_0}\) be a Talenti bubble, i.e. an extremal for the sharp Sobolev inequality at order \(s_0\). Then
\[
F_{u_{s_0}}(s_0)=0.
\]
Moreover, by Proposition~\ref{prop:fraclog-identity-variational}, the sharp fractional--logarithmic identity holds at \(s_0\), and therefore
\[
F'_{u_{s_0}}(s_0)=0.
\]

Since \(u_{s_0}\) is now fixed, the assumption \eqref{eq:Fprime-nonneg-thm} implies that the function
\[
s\longmapsto F_{u_{s_0}}(s)
\]
is nondecreasing on \((0,1)\). Because \(F_{u_{s_0}}(s_0)=0\), it follows that
\[
F_{u_{s_0}}(s)\le 0
\qquad\text{for every }s\in(0,s_0).
\]
On the other hand, by the sharp fractional Sobolev inequality we always have
\[
F_{u_{s_0}}(s)\ge 0
\qquad\text{for every }s\in(0,1).
\]
Hence
\[
F_{u_{s_0}}(s)=0
\qquad\text{for every }s\in(0,s_0).
\]

Therefore the fixed function \(u_{s_0}\) is an extremal for the sharp Sobolev inequality not only at the order \(s_0\), but for every \(s\in(0,s_0)\), this is impossible. This contradiction shows that \eqref{eq:naive-fraclog-ineq} cannot hold for all \(s\in(0,1)\), and hence the naive inequality \eqref{eq:Fprime-nonneg-thm} fails in general.
\end{proof}

We now transfer the Euclidean extremal identity to the sphere via stereographic projection and the conformal pullback.

\begin{proof}[\textbf{Proof of Proposition \ref{prop:fraclog-identity-sphere-correct}.}]
We derive the spherical identity by transporting the Euclidean extremal identity through stereographic projection.

Let \(x=\sigma(\omega)\), where
\[
\sigma:\mathbb S^N\setminus\{-e_{N+1}\}\to\mathbb R^N
\]
is the stereographic projection, and let
\[
\phi(x)=\frac{2}{1+|x|^2}.
\]
By definition of the conformal pullback,
\[
u_s(x)=\phi(x)^{\frac{N-2s}{2}}U_s(\omega),
\qquad \omega=\sigma^{-1}(x),
\qquad dV_g(\omega)=\phi(x)^N\,dx.
\]

Since
\[
\frac{N-2s}{2}\,p(s)=N,
\]
we have
\[
|u_s(x)|^{p(s)}=\phi(x)^N|U_s(\omega)|^{p(s)},
\]
and therefore
\[
|u_s(x)|^{p(s)}\,dx=|U_s(\omega)|^{p(s)}\,dV_g(\omega).
\]
It follows that
\[
\|u_s\|_{L^{p(s)}(\mathbb R^N)}
=
\|U_s\|_{L^{p(s)}(\mathbb S^N)}.
\]

Next, from
\[
\frac{|u_s(x)|^{p(s)}}{\|u_s\|_{L^{p(s)}(\mathbb R^N)}^{p(s)}}
=
\phi(x)^N
\frac{|U_s(\omega)|^{p(s)}}{\|U_s\|_{L^{p(s)}(\mathbb S^N)}^{p(s)}},
\]
we obtain
\begin{align*}
\Ent_{p(s)}(u_s)
&=
\int_{\mathbb R^N}
\frac{|u_s(x)|^{p(s)}}{\|u_s\|_{L^{p(s)}(\mathbb R^N)}^{p(s)}}
\ln\!\left(
\frac{|u_s(x)|^{p(s)}}{\|u_s\|_{L^{p(s)}(\mathbb R^N)}^{p(s)}}
\right)\,dx
\\
&=
\Ent_{p(s)}^{\mathbb S^N}(U_s)
+
N\int_{\mathbb S^N}
\frac{|U_s(\omega)|^{p(s)}}{\|U_s\|_{L^{p(s)}(\mathbb S^N)}^{p(s)}}
\ln\phi(\sigma(\omega))\,dV_g(\omega).
\end{align*}
Hence
\begin{equation}\label{eq:entropy-correction-sphere-new}
\Ent_{p(s)}^{\mathbb S^N}(U_s)
=
\Ent_{p(s)}(u_s)
-
N\int_{\mathbb S^N}
\frac{|U_s(\omega)|^{p(s)}}{\|U_s\|_{L^{p(s)}(\mathbb S^N)}^{p(s)}}
\ln\phi(\sigma(\omega))\,dV_g(\omega).
\end{equation}

We now treat the energy terms. By the conformal intertwining relation for \(\mathscr P_g^s \),
\[
\langle u_s,(-\Delta)^s u_s\rangle_{L^2(\mathbb R^N)}
=
\langle U_s,\mathscr P_g^s U_s\rangle_{\mathbb S^N}.
\]
Equivalently,
\begin{equation}\label{eq:Ps-energy-sphere}
\int_{\mathbb R^N}|\xi|^{2s}|\widehat u_s(\xi)|^2\,d\xi
=
\langle U_s,\mathscr P_g^s U_s\rangle_{\mathbb S^N}.
\end{equation}

For the fractional--logarithmic operator, arguing as in Proposition~\ref{prop:sphere-RN-frac-log}, the weak intertwining formula gives
\begin{align}
\langle U_s,\mathscr P^{s+\ln}_g U_s\rangle_{\mathbb S^N}
&=
\langle u_s,(-\Delta)^{s+\ln}u_s\rangle_{L^2(\mathbb R^N)}
\notag\\
&\quad
-2\big\langle (-\Delta)^{s/2}u_s,\,
(-\Delta)^{s/2}\bigl((\ln\phi)\,u_s\bigr)\big\rangle_{L^2(\mathbb R^N)}.
\label{eq:weak-intertwining-frac-log-new}
\end{align}
Hence
\begin{equation}\label{eq:log-energy-correction-new}
\langle u_s,(-\Delta)^{s+\ln}u_s\rangle_{L^2(\mathbb R^N)}
=
\langle U_s,\mathscr P^{s+\ln}_g U_s\rangle_{\mathbb S^N}
+
2\big\langle (-\Delta)^{s/2}u_s,\,
(-\Delta)^{s/2}\bigl((\ln\phi)\,u_s\bigr)\big\rangle_{L^2(\mathbb R^N)}.
\end{equation}
By the weak form of the intertwining relation for \(\mathscr P_g^s \),
\begin{equation}\label{eq:ps-correction-sphere-new}
\big\langle (-\Delta)^{s/2}u_s,\,
(-\Delta)^{s/2}\bigl((\ln\phi)\,u_s\bigr)\big\rangle_{L^2(\mathbb R^N)}
=
\int_{\mathbb S^N}
\ln\phi(\sigma(\omega))\,U_s(\omega)\,\mathscr P_g^s U_s(\omega)\,dV_g(\omega).
\end{equation}

Now we start from the Euclidean extremal identity in the form
\[
\frac{2}{N}\,\Ent_{p(s)}(u_s)
=
\kappa'_{N,s}\,
\frac{\int_{\mathbb R^N}|\xi|^{2s}|\widehat u_s(\xi)|^2\,d\xi}
{\|u_s\|_{L^{p(s)}(\mathbb R^N)}^2}
+
\kappa_{N,s}\,
\frac{\int_{\mathbb R^N}|\xi|^{2s}\ln|\xi|^2\,|\widehat u_s(\xi)|^2\,d\xi}
{\|u_s\|_{L^{p(s)}(\mathbb R^N)}^2}.
\]
Using
\[
\int_{\mathbb R^N}|\xi|^{2s}|\widehat u_s(\xi)|^2\,d\xi
=
\langle u_s,(-\Delta)^s u_s\rangle_{L^2(\mathbb R^N)},
\]
and
\[
\int_{\mathbb R^N}|\xi|^{2s}\ln|\xi|^2\,|\widehat u_s(\xi)|^2\,d\xi
=
\langle u_s,(-\Delta)^{s+\ln}u_s\rangle_{L^2(\mathbb R^N)},
\]
together with \eqref{eq:Ps-energy-sphere}, \eqref{eq:log-energy-correction-new}, and
\[
\|u_s\|_{L^{p(s)}(\mathbb R^N)}
=
\|U_s\|_{L^{p(s)}(\mathbb S^N)},
\]
we get
\begin{align*}
\frac{2}{N}\,\Ent_{p(s)}(u_s)
&=
\kappa'_{N,s}\,
\frac{\langle U_s,\mathscr P_g^s U_s\rangle_{\mathbb S^N}}
{\|U_s\|_{L^{p(s)}(\mathbb S^N)}^2}
+
\kappa_{N,s}\,
\frac{\langle U_s,\mathscr P^{s+\ln}_g U_s\rangle_{\mathbb S^N}}
{\|U_s\|_{L^{p(s)}(\mathbb S^N)}^2}
\\
&\qquad
+\frac{2\kappa_{N,s}}{\|U_s\|_{L^{p(s)}(\mathbb S^N)}^2}
\big\langle (-\Delta)^{s/2}u_s,\,
(-\Delta)^{s/2}\bigl((\ln\phi)\,u_s\bigr)\big\rangle_{L^2(\mathbb R^N)}.
\end{align*}
Using \eqref{eq:ps-correction-sphere-new}, this becomes
\begin{align*}
\frac{2}{N}\,\Ent_{p(s)}(u_s)
&=
\kappa'_{N,s}\,
\frac{\langle U_s,\mathscr P_g^s U_s\rangle_{\mathbb S^N}}
{\|U_s\|_{L^{p(s)}(\mathbb S^N)}^2}
+
\kappa_{N,s}\,
\frac{\langle U_s,\mathscr P^{s+\ln}_g U_s\rangle_{\mathbb S^N}}
{\|U_s\|_{L^{p(s)}(\mathbb S^N)}^2}
\\
&\qquad
+\frac{2\kappa_{N,s}}{\|U_s\|_{L^{p(s)}(\mathbb S^N)}^2}
\int_{\mathbb S^N}
\ln\phi(\sigma(\omega))\,U_s(\omega)\,\mathscr P_g^s U_s(\omega)\,dV_g(\omega).
\end{align*}
Finally, combining this identity with \eqref{eq:entropy-correction-sphere-new}, we obtain
\eqref{eq:sharp-fraclog-identity-sphere-correct}.
\end{proof}

\subsection{New Sharp Fractional--Logarithmic Sobolev Inequalities}

\subsubsection{First version using Pitt's inequality}
% Convention A (with 2\pi in the phase; unitary on L^2):
\newcommand{\Ftwopi}{\mathcal F_{2\pi}}
\newcommand{\Finfty}{\mathcal F}

% (A) Fourier transform used in the quoted theorem:
%   (\Ftwopi f)(\xi)=\int_{\R^n} e^{-2\pi i x\cdot \xi} f(x)\,dx.
% It satisfies Plancherel:  \|f\|_2=\|\Ftwopi f\|_2.

% Convention B (no 2\pi in the phase; also chosen unitary on L^2):
%   (\Finfty f)(\xi)=(2\pi)^{-n/2}\int_{\R^n} e^{- i x\cdot \xi} f(x)\,dx,
% so that Plancherel holds: \|f\|_2=\|\Finfty f\|_2.
%
% Relation between the two unitary conventions:
%   (\Ftwopi f)(\xi)=(2\pi)^{n/2}(\Finfty f)(2\pi\xi).

We begin with the following classical logarithmic uncertainty principle due to Beckner, which will serve as a basic point of comparison for the fractional--logarithmic inequalities considered below.

\begin{theorem}\cite[Theorem 1.3]{beckner1995pitt}\label{beckner}
Let $u\in\mathcal S(\R^N)$ with $\|u\|_{2}=1$.
Then
\[\frac{n}{2}\int_{\R^N}\ln|\xi|\,|\widehat u(\xi)|^{2}\,d\xi
\ \ge\
\int_{\R^N}\ln|u(x)|\,|u(x)|^{2}\,dx
 +B_N,\]
where the sharp constant is
\[B_N
=
\frac{N}{2}\psi\,\Big(\frac{N}{2}\Big)
-\frac{N}{4}\ln\pi
-\frac12\ln\!\Big(\frac{\Gamma(N)}{\Gamma(\frac N2)}\Big)+\frac{N}{2}\ln(2\pi).\]
Moreover, up to conformal automorphisms, the extremals are of the form
\[
u(x)=A\,(1+|x|^2)^{-N/2}.
\]
\end{theorem}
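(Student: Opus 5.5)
The plan is to obtain this inequality as a rewriting of the sharp Euclidean logarithmic Sobolev inequality \eqref{eq:logS-RN-entropy}. That inequality was derived in the proof of Proposition~\ref{prop:logS-beckner-equivalence} by differentiating the sharp fractional Sobolev inequality \eqref{eq:Sobolev-RN-frac-kappa} at $s=0^+$. We read the $n$ in the statement as $N$.

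\textbf{Step 1 (rewrite the energy in Fourier form).} For $v\in\mathcal S(\mathbb R^N)$ we have
\[
\|v\|_{\dot H^s}^2=\int_{\mathbb R^N}|\xi|^{2s}|\widehat v|^2\,d\xi .
\]
Differentiating under the integral at $s=0$ is legitimate, since $|\xi|^{2s}|\ln|\xi||$ is dominated near $0$ and at infinity by the rapid decay of $\widehat v$. This gives
\[
\langle v,(-\Delta)^{\ln}v\rangle=\int_{\mathbb R^N}\ln|\xi|^2\,|\widehat v(\xi)|^2\,d\xi .
\]

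\textbf{Step 2 (extend from $C_c^\infty$ to $\mathcal S$).} Inequality \eqref{eq:logS-RN-entropy} was stated for $C_c^\infty$. We extend it to $\mathcal S(\mathbb R^N)$ by approximating $u$ with $u_k:=\chi(\cdot/k)\,u$, where $\chi$ is a smooth cutoff equal to $1$ near the origin.
\begin{itemize}
\item Then $u_k\to u$ in $\mathcal S$, so $\widehat u_k\to\widehat u$ in $\mathcal S$ and the logarithmic energies converge.
\item For the entropy, $\int |u_k|^2\ln|u_k|^2\to\int |u|^2\ln|u|^2$ by dominated convergence. The two bounds are $t^2|\ln t^2|\le C(t+t^3)$ for small and large $t$, together with $|u_k|\le|u|$.
\end{itemize}
This density step is the only place requiring some care, and it is routine.

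\textbf{Step 3 (normalize and compare constants).} Take $\|u\|_2=1$. Then
\[
\Ent_2(u)=\int|u|^2\ln|u|^2=2\int|u|^2\ln|u| .
\]
Inequality \eqref{eq:logS-RN-entropy} therefore reads
\[
\frac4N\int|u|^2\ln|u|\,dx\le a_N+2\int\ln|\xi|\,|\widehat u|^2\,d\xi ,
\]
that is,
\[
\frac N2\int\ln|\xi|\,|\widehat u|^2\,d\xi\ \ge\ \int|u|^2\ln|u|\,dx-\frac N4 a_N .
\]
It remains to check $-\tfrac N4a_N=B_N$ using \eqref{eq:aN}:
\[
-\frac N4 a_N=-\frac12\ln\frac{\Gamma(N)}{\Gamma(N/2)}+\frac N4\ln(4\pi)+\frac N2\psi\!\left(\tfrac N2\right),
\]
and $\tfrac N2\ln(2\pi)-\tfrac N4\ln\pi=\tfrac N4\ln(4\pi)$, so the two expressions agree.

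\textbf{Step 4 (extremals).} For equality, note that $u_0=(1+|x|^2)^{-N/2}$ attains equality in the $s\to0$ identity \eqref{euclilog}, by Remark~\ref{examd}. To make this rigorous rather than formal, we would either pass to the limit $s\to0^+$ in Proposition~\ref{prop:fraclog-identity-variational}, using the explicit Bessel formula of Lemma~\ref{lem:fourier-bubble}, or compute both sides directly for $u_0$. Invariance of both sides under translations, dilations and the inversion (the conformal group) then produces the full family of extremals.

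The characterization that these are the only extremals is the genuinely hard part. We would not reprove it; instead we would cite the uniqueness of extremals for the sharp fractional Sobolev/HLS inequality, or Beckner's original argument in \cite{beckner1995pitt}.
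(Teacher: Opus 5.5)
Your derivation of the inequality is correct, but it takes a different route from the paper, which gives no proof of Theorem~\ref{beckner} and simply imports it from Beckner. You observe that once $\|u\|_2=1$ and $\langle u,(-\Delta)^{\ln}u\rangle=\int_{\mathbb R^N}\ln|\xi|^2\,|\widehat u|^2\,d\xi$, the statement is exactly \eqref{eq:logS-RN-entropy} multiplied by $N/4$ and rearranged. Your constant check $-\tfrac N4 a_N=B_N$ is right; the extra $\tfrac N2\ln(2\pi)$ in $B_N$ is precisely the shift from Beckner's $e^{-2\pi i x\cdot\xi}$ Fourier convention to the unitary one used here. Proposition~\ref{prop:logS-beckner-equivalence} is obtained solely by differentiating the sharp fractional Sobolev inequality at $s=0^+$ and never invokes Theorem~\ref{beckner}, so there is no circularity. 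What your route buys is that the inequality later fed into Proposition~\ref{prop:fraclog-beckner} becomes a consequence of Proposition~\ref{prop:logS-beckner-equivalence}, hence only of the sharp fractional Sobolev inequality. Your density Step~2 is correct but unnecessary. The sharp fractional Sobolev inequality holds on all of $\dot H^s(\mathbb R^N)$, so the computation of $F_v'(0^+)$ can be run directly for $v\in\mathcal S(\mathbb R^N)$.

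What the citation buys, and your route does not, is the equality clause. That $u_0=(1+|x|^2)^{-N/2}$ attains equality can be made rigorous as you suggest. Use $F'_{u_{s_0}}(s_0)=0$ from Remark~\ref{rem:F-derivative-normalized} and let $s_0\to0^+$, with three observations:
\begin{enumerate}
\item[(a)] $|u_s|^{p(s)}=(1+|x|^2)^{-N}$, so the entropy term is constant in $s$.
\item[(b)] $\kappa'_{N,s}\to a_N$.
\item[(c)] Lemma~\ref{lem:fourier-bubble}, together with the monotonicity of $K_\nu$ in $\nu$, gives dominated convergence of $\int|\xi|^{2s}\ln|\xi|^2\,|\widehat u_s|^2\,d\xi$.
\end{enumerate}
Remark~\ref{examd} is only formal, so this limit argument has to be supplied.

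However, the assertion that these are the \emph{only} extremals cannot be deduced from uniqueness of extremals for the fractional Sobolev/HLS inequality at each $s>0$. The endpoint inequality is $F_v'(0^+)\ge 0$, and $F_v'(0^+)=0$ is fully compatible with $F_v(s)>0$ for every $s>0$. Differentiating at the endpoint therefore discards all equality information. For that clause you need a genuine endpoint characterization, i.e.\ Beckner's theorem itself, which is exactly what the paper relies on.
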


We next establish the convergence of the entropy functional along a smooth regularization of \((-\Delta)^{s/2}u\).

\begin{lemma}\label{lem:entropy-convergence-feps}
Let \(0<s<1,N>2s\) and let \(u\in \mathcal S(\mathbb R^N)\). For \(\varepsilon\in(0,1)\), define
\(f_\varepsilon\in \mathcal S(\mathbb R^N)\) by
\[
\widehat f_\varepsilon(\xi):=(|\xi|^2+\varepsilon)^{s/2}\widehat u(\xi),
\qquad \xi\in\mathbb R^N,
\]
and define \(f\in L^2(\mathbb R^N)\) by
\[
\widehat f(\xi):=|\xi|^s\widehat u(\xi).
\]
Then
\[
\int_{\mathbb R^N} |f_\varepsilon(x)|^2 \ln |f_\varepsilon(x)|\,dx
\longrightarrow
\int_{\mathbb R^N} |f(x)|^2 \ln |f(x)|\,dx
\qquad\text{as }\varepsilon\downarrow0.
\]

Moreover, if \(f\not\equiv 0\), then
\[
h_\varepsilon:=\frac{f_\varepsilon}{\|f_\varepsilon\|_2},
\qquad
h:=\frac{f}{\|f\|_2},
\]
satisfy
\[
\int_{\mathbb R^N} |h_\varepsilon(x)|^2 \ln |h_\varepsilon(x)|\,dx
\longrightarrow
\int_{\mathbb R^N} |h(x)|^2 \ln |h(x)|\,dx
\qquad\text{as }\varepsilon\downarrow0.
\]
\end{lemma}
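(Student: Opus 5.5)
The plan is to prove convergence of $\int |f_\varepsilon|^2\ln|f_\varepsilon|\,dx$ by controlling the integrand $\Phi(t):=t^2\ln t$ (with $\Phi(0)=0$) separately where $|f|$ is large and where it is small. The tool is a Vitali-type argument built on pointwise convergence plus a uniform decay bound.

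\textbf{Step 1: convergence in $L^2$ and $L^\infty$.} The multiplier obeys $0\le(|\xi|^2+\varepsilon)^{s/2}-|\xi|^s\le\varepsilon^{s/2}$. Since $\widehat u\in\mathcal S$, this gives
\[
\|\widehat f_\varepsilon-\widehat f\|_{L^1\cap L^2}\le\varepsilon^{s/2}\|\widehat u\|_{L^1\cap L^2}\to0 .
\]
Hence $f_\varepsilon\to f$ in $L^2$ and uniformly. In particular $\sup_\varepsilon\|f_\varepsilon\|_\infty<\infty$ and $\|f_\varepsilon\|_2\to\|f\|_2$.

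\textbf{Step 2: uniform spatial decay.} This is the main obstacle, because the symbol $|\xi|^s$ is not smooth at the origin, so $f\notin\mathcal S$ in general. I would aim for a bound
\[
|f_\varepsilon(x)|+|f(x)|\le C(1+|x|)^{-N-s},
\]
with $C$ independent of $\varepsilon\in(0,1)$. First I would try splitting $\widehat u=\chi\widehat u+(1-\chi)\widehat u$, where $\chi$ is a cutoff near $\xi=0$. The high-frequency part has symbols $(|\xi|^2+\varepsilon)^{s/2}(1-\chi)$ that are smooth, with $\xi$-derivatives bounded uniformly in $\varepsilon$. That piece is therefore Schwartz with uniform seminorms. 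For the low-frequency part I would use the standard estimate for the kernel of $m(\xi)\chi(\xi)$, where $m$ satisfies Mikhlin-type bounds $|\partial^\alpha m|\le C|\xi|^{s-|\alpha|}$ uniformly in $\varepsilon$; such kernels decay like $|x|^{-N-s}$. Integrating by parts $N+1$ times with a dyadic split, as in the proof of homogeneous-kernel decay, gives this uniformly in $\varepsilon$. Convolving with $u\in\mathcal S$ keeps the decay $(1+|x|)^{-N-s}$. If this route gets messy, a weaker bound $|f_\varepsilon(x)|\le C(1+|x|)^{-N/2-\delta}$ for some $\delta>0$ suffices.

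\textbf{Step 3: domination.} Set $M:=\sup_\varepsilon\|f_\varepsilon\|_\infty$. For $t\le M$ and any fixed $\delta'\in(0,1)$ one has $|\Phi(t)|\le C_{\delta'}t^{2-\delta'}$. Combined with Step 2 this gives
\[
|\Phi(|f_\varepsilon|)|\le C(1+|x|)^{-(N+s)(2-\delta')},
\]
and choosing $\delta'$ small makes the right-hand side integrable. Pointwise convergence $\Phi(|f_\varepsilon|)\to\Phi(|f|)$ holds because $\Phi$ is continuous on $[0,\infty)$ and Step 1 gives uniform convergence. Dominated convergence then yields the first claim.

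\textbf{Step 4: the normalized statement.} Write $c_\varepsilon=\|f_\varepsilon\|_2\to c=\|f\|_2>0$, using $f\not\equiv0$. Then
\[
\int|h_\varepsilon|^2\ln|h_\varepsilon|=c_\varepsilon^{-2}\int|f_\varepsilon|^2\ln|f_\varepsilon|-\ln c_\varepsilon ,
\]
and both terms converge by Step 3 and Step 1. This proves the second claim.
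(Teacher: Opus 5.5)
Your proof is correct, but at the key step, tail control, it takes a different route from the paper.

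\textbf{The paper's route.} It never proves pointwise decay. It bounds a weak weighted moment $\sup_{\varepsilon}\int|x|^{2\alpha}|f_\varepsilon|^2\,dx$ for some $\alpha<s/2$. To do so, it writes $\||x|^\alpha v\|_2$ as a Gagliardo seminorm of $\widehat v$ and estimates the seminorm of $(|\xi|^2+\varepsilon)^{s/2}\widehat u$ using only the $s/2$-H\"older continuity of $t\mapsto t^{s/2}$. It then gets uniform integrability of $|f_\varepsilon|^2|\ln|f_\varepsilon||$ from $t^2|\ln t|\le C(t^{2-\delta}+t^{2+\delta})$ together with H\"older on $\{|x|>R\}$, and concludes by Vitali's theorem.

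\textbf{Your route.} You prove a uniform pointwise bound via symbol/kernel estimates. This gives a single integrable majorant, so you finish with plain dominated convergence. Your Step 1 also gives an explicit rate $\varepsilon^{s/2}$, where the paper only uses dominated convergence.

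\textbf{What each buys.} Yours gives more information: pointwise decay of $(-\Delta)^{s/2}u$ and of its regularizations, uniformly in $\varepsilon$. The cost is invoking the kernel-decay lemma for symbols of order $s$. The paper's argument is purely $L^2$-based and needs no derivative bounds on the multiplier.

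\textbf{A small correction in Step 2.} The bound $|\partial^\alpha m_\varepsilon(\xi)|\le C|\xi|^{s-|\alpha|}$ holds uniformly in $\varepsilon$ only for $|\alpha|\ge1$. For $\alpha=0$ it fails at $\xi=0$, since $m_\varepsilon(0)=\varepsilon^{s/2}$. Used naively in the dyadic/integration-by-parts argument, the zeroth-order term contributes an extra $\varepsilon^{s/2}|x|^{-N}$ to the kernel bound. This is harmless, and there are two ways to handle it:
\begin{itemize}
\item Write $m_\varepsilon=\varepsilon^{s/2}+(m_\varepsilon-\varepsilon^{s/2})$. The constant part gives the Schwartz kernel $\varepsilon^{s/2}\mathcal F^{-1}[\chi]$. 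The remainder satisfies $0\le m_\varepsilon-\varepsilon^{s/2}\le|\xi|^s$ and has the same derivative bounds, so you recover the uniform $(1+|x|)^{-N-s}$ decay.
\item Alternatively, use the crude bound $|m_\varepsilon|\le|\xi|^s+1$. It already yields uniform $(1+|x|)^{-N}$ decay, which meets your fallback requirement $(1+|x|)^{-N/2-\delta}$.
\end{itemize}
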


\begin{proof}
We divide the proof into several steps.

\medskip
\noindent
\textbf{Step 1: \(f_\varepsilon\to f\) in \(L^\infty(\mathbb R^n)\) and in \(L^2(\mathbb R^n)\).}

Since \(u\in \mathcal S(\mathbb R^n)\), we have \(\widehat u\in \mathcal S(\mathbb R^n)\). Moreover,
for every \(\xi\in\mathbb R^n\) and every \(\varepsilon\in(0,1)\),
\[
\bigl|(|\xi|^2+\varepsilon)^{s/2}\widehat u(\xi)\bigr|
\le
(1+|\xi|^2)^{s/2}|\widehat u(\xi)|.
\]
Because \((1+|\xi|^2)^{s/2}|\widehat u(\xi)|\in L^1(\mathbb R^n)\), the dominated convergence theorem yields
\[
\widehat f_\varepsilon \to \widehat f
\qquad\text{in }L^1(\mathbb R^n).
\]
Hence, by Fourier inversion,
\[
\|f_\varepsilon-f\|_{L^\infty(\mathbb R^n)}
\le
\|\widehat f_\varepsilon-\widehat f\|_{L^1(\mathbb R^n)}
\longrightarrow 0.
\]
Similarly, since
\[
\bigl|(|\xi|^2+\varepsilon)^{s/2}\widehat u(\xi)\bigr|^2
\le
(1+|\xi|^2)^s |\widehat u(\xi)|^2
\in L^1(\mathbb R^n),
\]
the dominated convergence theorem also gives
\[
\widehat f_\varepsilon \to \widehat f
\qquad\text{in }L^2(\mathbb R^n),
\]
and therefore, by Plancherel,
\[
\|f_\varepsilon-f\|_{L^2(\mathbb R^n)}\to0.
\]

\medskip
\noindent
\textbf{Step 2: Uniform \(L^\infty\) and \(L^2\) bounds.}

From the previous estimates,
\[
\sup_{\varepsilon\in(0,1)} \|\widehat f_\varepsilon\|_{L^1(\mathbb R^n)}
\le
\int_{\mathbb R^n} (1+|\xi|^2)^{s/2}|\widehat u(\xi)|\,d\xi
=:A_\infty<\infty,
\]
hence
\[
\sup_{\varepsilon\in(0,1)} \|f_\varepsilon\|_{L^\infty(\mathbb R^n)}
\le A_\infty.
\]
Likewise,
\[
\sup_{\varepsilon\in(0,1)} \|f_\varepsilon\|_{L^2(\mathbb R^n)}^2
\le
\int_{\mathbb R^n} (1+|\xi|^2)^s |\widehat u(\xi)|^2\,d\xi<\infty.
\]

\medskip
\noindent
\textbf{Step 3: A uniform fractional moment bound.}

Fix \(\alpha\in(0,s/2)\). We claim that
\begin{equation}\label{eq:uniform-moment-feps}
\sup_{\varepsilon\in(0,1)}
\int_{\mathbb R^n} |x|^{2\alpha}|f_\varepsilon(x)|^2\,dx
<\infty.
\end{equation}

To prove this, we use the standard fractional Fourier moment estimate: for every
\(v\in \mathcal S(\mathbb R^n)\) and every \(\alpha\in(0,1)\), see \cite[Proposition 3.4]{di2012hitchhiker}
\[
\||x|^\alpha v\|_{L^2(\mathbb R^n)}^2
\le
C_{n,\alpha}\,[\widehat v]_{\dot H^\alpha(\mathbb R^n)}^2,
\]
where
\[
[\phi]_{\dot H^\alpha(\mathbb R^n)}^2
:=
\iint_{\mathbb R^n\times\mathbb R^n}
\frac{|\phi(\xi)-\phi(\eta)|^2}{|\xi-\eta|^{n+2\alpha}}\,d\xi d\eta.
\]
Therefore it suffices to prove that
\[
\sup_{\varepsilon\in(0,1)}
[\widehat f_\varepsilon]_{\dot H^\alpha(\mathbb R^n)}
<\infty.
\]

Write
\[
\widehat f_\varepsilon(\xi)=g_\varepsilon(\xi)\widehat u(\xi),
\qquad
g_\varepsilon(\xi):=(|\xi|^2+\varepsilon)^{s/2}.
\]
We split the seminorm into the regions
\[
D_{\mathrm{out}}:=\{(\xi,\eta)\in\mathbb R^n\times\mathbb R^n:\ |\xi-\eta|>1\},
\]
and
\[
D_{\mathrm{in}}:=\{(\xi,\eta)\in\mathbb R^n\times\mathbb R^n:\ |\xi-\eta|\le1\}.
\]

For the outer region, using \(|a-b|^2\le 2|a|^2+2|b|^2\) and symmetry,
\[
\begin{aligned}
\iint_{D_{\mathrm{out}}}
\frac{|\widehat f_\varepsilon(\xi)-\widehat f_\varepsilon(\eta)|^2}{|\xi-\eta|^{n+2\alpha}}
\,d\xi d\eta
&\le
4 \int_{\mathbb R^n} |\widehat f_\varepsilon(\xi)|^2
\left(\int_{|\eta-\xi|>1}\frac{d\eta}{|\xi-\eta|^{n+2\alpha}}\right)d\xi \\
&\le
C_\alpha \int_{\mathbb R^n} (|\xi|^2+\varepsilon)^s |\widehat u(\xi)|^2\,d\xi \\
&\le
C_\alpha \int_{\mathbb R^n} (1+|\xi|^2)^s |\widehat u(\xi)|^2\,d\xi
\le C.
\end{aligned}
\]

For the inner region, write \(\xi=\eta+w\), with \(|w|\le1\). Then
\[
\widehat f_\varepsilon(\eta+w)-\widehat f_\varepsilon(\eta)
=
g_\varepsilon(\eta+w)\bigl(\widehat u(\eta+w)-\widehat u(\eta)\bigr)
+
\widehat u(\eta)\bigl(g_\varepsilon(\eta+w)-g_\varepsilon(\eta)\bigr).
\]
Hence
\[
|\widehat f_\varepsilon(\eta+w)-\widehat f_\varepsilon(\eta)|^2
\le
2I_{1,\varepsilon}(\eta,w)+2I_{2,\varepsilon}(\eta,w),
\]
where
\[
I_{1,\varepsilon}(\eta,w)
:=
|g_\varepsilon(\eta+w)|^2\,|\widehat u(\eta+w)-\widehat u(\eta)|^2,
\]
and
\[
I_{2,\varepsilon}(\eta,w)
:=
|\widehat u(\eta)|^2\,|g_\varepsilon(\eta+w)-g_\varepsilon(\eta)|^2.
\]

We first estimate the contribution of \(I_{1,\varepsilon}\).
Since \(\widehat u\in\mathcal S(\mathbb R^n)\), for every \(N>0\) there exists \(C_N>0\) such that
\[
|\widehat u(\eta+w)-\widehat u(\eta)|
\le
C_N |w|(1+|\eta|)^{-N}
\qquad\text{for all }|w|\le1.
\]
Moreover, since \(|w|\le1\),
\[
|g_\varepsilon(\eta+w)|^2
=
(|\eta+w|^2+\varepsilon)^s
\le
C(1+|\eta|^2)^s.
\]
Therefore,
\[
I_{1,\varepsilon}(\eta,w)
\le
C_N |w|^2 (1+|\eta|)^{2s-2N}.
\]
Consequently,
\[
\begin{aligned}
\iint_{D_{\mathrm{in}}}
\frac{I_{1,\varepsilon}(\eta,w)}{|w|^{n+2\alpha}}\,d\eta dw
&\le
C_N \int_{|w|\le1}|w|^{2-n-2\alpha}\,dw
\int_{\mathbb R^n}(1+|\eta|)^{2s-2N}\,d\eta.
\end{aligned}
\]
Since \(\alpha<1\), the \(w\)-integral is finite, and choosing \(N\) sufficiently large makes the \(\eta\)-integral finite. Hence this term is bounded uniformly in \(\varepsilon\).

We next estimate the contribution of \(I_{2,\varepsilon}\).
Since \(0<s<1\), the map \(t\mapsto t^{s/2}\) is Hölder continuous of order \(s/2\) on \([0,\infty)\). Thus
\[
|a^{s/2}-b^{s/2}|\le |a-b|^{s/2}
\qquad\text{for all }a,b\ge0.
\]
Applying this with \(a=|\eta+w|^2+\varepsilon\) and \(b=|\eta|^2+\varepsilon\), we obtain
\[
\begin{aligned}
|g_\varepsilon(\eta+w)-g_\varepsilon(\eta)|
&\le
\bigl||\eta+w|^2-|\eta|^2\bigr|^{s/2} \\
&\le
C(|\eta|+|w|)^{s/2}|w|^{s/2}
\le
C(1+|\eta|)^{s/2}|w|^{s/2},
\end{aligned}
\]
for \(|w|\le1\). Hence
\[
I_{2,\varepsilon}(\eta,w)
\le
C |\widehat u(\eta)|^2 (1+|\eta|)^s |w|^s.
\]
Therefore,
\[
\begin{aligned}
\iint_{D_{\mathrm{in}}}
\frac{I_{2,\varepsilon}(\eta,w)}{|w|^{n+2\alpha}}\,d\eta dw
&\le
C \int_{\mathbb R^n} |\widehat u(\eta)|^2 (1+|\eta|)^s\,d\eta
\int_{|w|\le1}|w|^{s-n-2\alpha}\,dw.
\end{aligned}
\]
Since \(\alpha<s/2\), the \(w\)-integral is finite, and the \(\eta\)-integral is finite because
\(\widehat u\in\mathcal S(\mathbb R^n)\). This bound is again uniform in \(\varepsilon\).

Combining the estimates over \(D_{\mathrm{out}}\) and \(D_{\mathrm{in}}\), we conclude that
\[
\sup_{\varepsilon\in(0,1)}
[\widehat f_\varepsilon]_{\dot H^\alpha(\mathbb R^n)}
<\infty.
\]
Hence \eqref{eq:uniform-moment-feps} follows.

\medskip
\noindent
\textbf{Step 4: Uniform integrability of the entropy integrand.}

Define
\[
\Phi(t):=|t|^2|\ln|t||,\qquad t\in\mathbb R,
\]
with the convention \(\Phi(0)=0\).

We first show local uniform integrability. Since
\[
\sup_{\varepsilon\in(0,1)}\|f_\varepsilon\|_{L^\infty(\mathbb R^n)}\le A_\infty,
\]
the function \(\Phi\) is bounded on \([-A_\infty,A_\infty]\). Hence
\[
\sup_{\varepsilon\in(0,1)}\int_E \Phi(f_\varepsilon(x))\,dx
\le
\sup_{|t|\le A_\infty}\Phi(t)\,|E|
\]
for every measurable set \(E\subset\mathbb R^n\). It follows that
\(\{\Phi(f_\varepsilon)\}_{\varepsilon\in(0,1)}\) is uniformly integrable on sets of finite measure.

We next control the tails. Fix \(\delta\in(0,\min\{1,2\alpha n^{-1}\cdot\infty\})\) so small that
\[
\frac{2\alpha(2-\delta)}{\delta}>n.
\]
Using the elementary inequality
\[
t^2|\ln t|
\le
C_\delta\bigl(t^{2-\delta}+t^{2+\delta}\bigr),
\qquad t\ge0,
\]
we obtain
\[
\int_{|x|>R}\Phi(f_\varepsilon(x))\,dx
\le
C_\delta
\left(
\int_{|x|>R}|f_\varepsilon(x)|^{2-\delta}\,dx
+
\int_{|x|>R}|f_\varepsilon(x)|^{2+\delta}\,dx
\right).
\]

For the first term, by Hölder's inequality and \eqref{eq:uniform-moment-feps},
\[
\begin{aligned}
\int_{|x|>R}|f_\varepsilon(x)|^{2-\delta}\,dx
&=
\int_{|x|>R}
\bigl(|x|^{2\alpha}|f_\varepsilon(x)|^2\bigr)^{\frac{2-\delta}{2}}
|x|^{-\alpha(2-\delta)}\,dx \\
&\le
\left(\int_{|x|>R}|x|^{2\alpha}|f_\varepsilon(x)|^2\,dx\right)^{\frac{2-\delta}{2}}
\left(\int_{|x|>R}|x|^{-\frac{2\alpha(2-\delta)}{\delta}}\,dx\right)^{\frac{\delta}{2}} \\
&\le
C
\left(\int_{|x|>R}|x|^{-\frac{2\alpha(2-\delta)}{\delta}}\,dx\right)^{\frac{\delta}{2}},
\end{aligned}
\]
and the last quantity tends to \(0\) as \(R\to\infty\), uniformly in \(\varepsilon\), because
\(\frac{2\alpha(2-\delta)}{\delta}>n\).

For the second term, using the uniform \(L^\infty\) bound together with \eqref{eq:uniform-moment-feps},
\[
\begin{aligned}
\int_{|x|>R}|f_\varepsilon(x)|^{2+\delta}\,dx
&\le
\|f_\varepsilon\|_{L^\infty(\mathbb R^n)}^\delta
\int_{|x|>R}|f_\varepsilon(x)|^2\,dx \\
&\le
A_\infty^\delta R^{-2\alpha}
\int_{\mathbb R^n}|x|^{2\alpha}|f_\varepsilon(x)|^2\,dx
\le
C R^{-2\alpha},
\end{aligned}
\]
which also tends to \(0\) as \(R\to\infty\), uniformly in \(\varepsilon\).

Thus
\[
\lim_{R\to\infty}
\sup_{\varepsilon\in(0,1)}
\int_{|x|>R}\Phi(f_\varepsilon(x))\,dx
=0.
\]
Combining this tail estimate with the local uniform integrability proved above, we conclude that
\(\{\Phi(f_\varepsilon)\}_{\varepsilon\in(0,1)}\) is uniformly integrable in \(L^1(\mathbb R^n)\).

\medskip
\noindent
\textbf{Step 5: Passage to the limit.}

Since \(f_\varepsilon\to f\) uniformly on \(\mathbb R^n\), we have
\[
|f_\varepsilon(x)|^2\ln|f_\varepsilon(x)|
\longrightarrow
|f(x)|^2\ln|f(x)|
\qquad\text{for every }x\in\mathbb R^n,
\]
where we use the convention \(0\ln0=0\). By Step 4, the family
\[
\bigl\{|f_\varepsilon|^2\ln|f_\varepsilon|\bigr\}_{\varepsilon\in(0,1)}
\]
is uniformly integrable. Hence, by Vitali's theorem,
\[
\int_{\mathbb R^n}|f_\varepsilon(x)|^2\ln|f_\varepsilon(x)|\,dx
\longrightarrow
\int_{\mathbb R^n}|f(x)|^2\ln|f(x)|\,dx.
\]

Finally, assume \(f\not\equiv0\), and set
\[
h_\varepsilon=\frac{f_\varepsilon}{\|f_\varepsilon\|_2},
\qquad
h=\frac{f}{\|f\|_2}.
\]
Since \(\|f_\varepsilon-f\|_2\to0\), we have \(\|f_\varepsilon\|_2\to\|f\|_2>0\). Moreover,
\[
\int_{\mathbb R^n}|h_\varepsilon|^2\ln|h_\varepsilon|\,dx
=
\frac{1}{\|f_\varepsilon\|_2^2}
\int_{\mathbb R^n}|f_\varepsilon|^2\ln|f_\varepsilon|\,dx
-
\ln\|f_\varepsilon\|_2,
\]
and similarly
\[
\int_{\mathbb R^n}|h|^2\ln|h|\,dx
=
\frac{1}{\|f\|_2^2}
\int_{\mathbb R^n}|f|^2\ln|f|\,dx
-
\ln\|f\|_2.
\]
Passing to the limit yields
\[
\int_{\mathbb R^n}|h_\varepsilon(x)|^2\ln|h_\varepsilon(x)|\,dx
\longrightarrow
\int_{\mathbb R^n}|h(x)|^2\ln|h(x)|\,dx.
\]
This completes the proof.
\end{proof}

% Sketch of the constant shift:
% Since ln|2\pi\xi| = ln|\xi| + ln(2\pi) and \int|(\Finfty f)|^2=1,
% passing from \Ftwopi to \Finfty shifts the constant by +(n/2)ln(2\pi).

%------------------------------------------------------------
% Relation between Beckner's log-uncertainty and the Euclidean
% logarithmic Laplacian inequality
%------------------------------------------------------------

%------------------------------------------------------------
% A Beckner-type inequality for (-Δ)^{s+ln} via the substitution
% f = (-Δ)^{s/2}u
%------------------------------------------------------------
Combining Beckner's logarithmic uncertainty principle with the regularization argument from Lemma~\ref{lem:entropy-convergence-feps}, we obtain the following sharp fractional--logarithmic inequality.

\begin{proof}[\textbf{Proof of Proposition \ref{prop:fraclog-beckner}.}]
Set
\[
f:=(-\Delta)^{s/2}u,
\qquad
\widehat f(\xi)=|\xi|^s\widehat u(\xi).
\]
Since \(u\in\mathcal S(\mathbb R^N)\), for \(\varepsilon\in(0,1)\) define
\[
\widehat f_\varepsilon(\xi):=(|\xi|^2+\varepsilon)^{s/2}\widehat u(\xi),
\qquad
g_\varepsilon:=\frac{f_\varepsilon}{\|f_\varepsilon\|_2}.
\]
Then \(\widehat f_\varepsilon\in\mathcal S(\mathbb R^N)\), hence \(f_\varepsilon\in\mathcal S(\mathbb R^N)\), and therefore
\(g_\varepsilon\in\mathcal S(\mathbb R^N)\) with
\[
\|g_\varepsilon\|_{L^2(\mathbb R^N)}=1.
\]
Thus Beckner's logarithmic uncertainty principle applies to \(g_\varepsilon\), giving
\begin{equation}\label{eq:beckner-geps}
\frac n2\int_{\mathbb R^N}\ln|\xi|\,|\widehat g_\varepsilon(\xi)|^2\,d\xi
\ge
\int_{\mathbb R^N}\ln|g_\varepsilon(x)|\,|g_\varepsilon(x)|^2\,dx
+B_N.
\end{equation}

We now pass to the limit as \(\varepsilon\downarrow0\). First, since
\[
\bigl|(|\xi|^2+\varepsilon)^{s/2}\widehat u(\xi)\bigr|^2
\le
(1+|\xi|^2)^s|\widehat u(\xi)|^2,
\]
and \((1+|\xi|^2)^s|\widehat u(\xi)|^2\in L^1(\mathbb R^N)\), the dominated convergence theorem yields
\[
\|f_\varepsilon\|_2^2
=
\int_{\mathbb R^N} (|\xi|^2+\varepsilon)^s |\widehat u(\xi)|^2\,d\xi
\longrightarrow
\int_{\mathbb R^N} |\xi|^{2s} |\widehat u(\xi)|^2\,d\xi
=
\|f\|_2^2
=
1.
\]
Hence
\[
\widehat g_\varepsilon(\xi)
=
\frac{(|\xi|^2+\varepsilon)^{s/2}\widehat u(\xi)}{\|f_\varepsilon\|_2}.
\]
Next, again by dominated convergence,
\[
\int_{\mathbb R^N}\ln|\xi|\,|\widehat g_\varepsilon(\xi)|^2\,d\xi
=
\frac{1}{\|f_\varepsilon\|_2^2}
\int_{\mathbb R^N}\ln|\xi|\,(|\xi|^2+\varepsilon)^s|\widehat u(\xi)|^2\,d\xi
\longrightarrow
\int_{\mathbb R^N}\ln|\xi|\,|\xi|^{2s}|\widehat u(\xi)|^2\,d\xi.
\]
Indeed, the integrand is dominated by
\[
|\ln|\xi||\,(1+|\xi|^2)^s|\widehat u(\xi)|^2\in L^1(\mathbb R^N),
\]
since \(\widehat u\in\mathcal S(\mathbb R^N)\).

On the other hand, by Lemma~\ref{lem:entropy-convergence-feps}, since \(\|f\|_2=1\), we have
\[
\int_{\mathbb R^N}|g_\varepsilon(x)|^2\ln|g_\varepsilon(x)|\,dx
\longrightarrow
\int_{\mathbb R^N}|f(x)|^2\ln|f(x)|\,dx.
\]
Therefore, letting \(\varepsilon\downarrow0\) in \eqref{eq:beckner-geps}, we obtain
\[
\frac n2\int_{\mathbb R^N}\ln|\xi|\,|\xi|^{2s}|\widehat u(\xi)|^2\,d\xi
\ge
\int_{\mathbb R^N}|f(x)|^2\ln|f(x)|\,dx
+
B_N.
\]
Since \(f=(-\Delta)^{s/2}u\), this becomes
\[
\frac n2\int_{\mathbb R^N}\ln|\xi|\,|\xi|^{2s}|\widehat u(\xi)|^2\,d\xi
\ge
\int_{\mathbb R^N}\big|(-\Delta)^{s/2}u(x)\big|^2
\ln\big|(-\Delta)^{s/2}u(x)\big|\,dx
+
B_N,
\]
we get
\[
\int_{\mathbb R^N}\ln|\xi|\,|\xi|^{2s}|\widehat u(\xi)|^2\,d\xi
=
\frac12
\int_{\mathbb R^N}|\xi|^{2s}\ln|\xi|^2\,|\widehat u(\xi)|^2\,d\xi
=
\frac12\,\langle u,(-\Delta)^{s+\ln}u\rangle.
\]
Substituting this into the previous inequality yields \eqref{eq:fraclog-ineq-normalized}.

Finally, if \((-\Delta)^{s/2}u\) is an extremal for Beckner's inequality, then the right-hand side and left-hand side above coincide, and therefore equality holds in \eqref{eq:fraclog-ineq-normalized}.
\end{proof}

%------------------------------------------------------------
% A moment-bound inequality for (-Δ)^{s+ln} via Shannon Entropy
%------------------------------------------------------------
\subsubsection{Second version using the Shannon Entropy}

The inequality obtained above already relates the fractional--logarithmic energy to an entropy functional. However, the term
\[
\int_{\mathbb R^N}\big|(-\Delta)^{s/2}u(x)\big|^2
\ln\big|(-\Delta)^{s/2}u(x)\big|\,dx
\]
is still too implicit for many purposes. A natural way to further estimate it is to invoke the sharp Shannon entropy--moment inequality for probability densities, which bounds the entropy from below in terms of the second moment.

Indeed, let \(\rho(x)=|f(x)|^2\), where \(\|f\|_{L^2(\mathbb R^N)}=1\), so that \(\rho\) is a probability density on \(\mathbb R^N\). By the Gaussian maximization property of the differential entropy
(see \cite[Theorem 8.6.5]{cover2006elements}),
\[
-\int_{\mathbb R^N}\rho(x)\ln \rho(x)\,dx
\le
\frac12\ln\big((2\pi e)^N \det K\big),
\]
where \(K\) is the covariance matrix of \(\rho\). Using the arithmetic--geometric mean inequality,
\[
\det K \le \left(\frac{\operatorname{tr}K}{N}\right)^N,
\qquad
\operatorname{tr}K:=\int_{\mathbb R^N}|x|^2\rho(x)\,dx,
\]
we obtain
\begin{equation}\label{eq:entropy-moment-f}
\int_{\mathbb R^N}|f(x)|^2\ln|f(x)|\,dx
\ge
-\frac{N}{4}\ln\!\Bigg(\frac{2\pi e}{N}\int_{\mathbb R^N}|x|^2|f(x)|^2\,dx\Bigg),
\end{equation}
whenever \(\|f\|_{L^2(\mathbb R^N)}=1\).

Equality holds if and only if \(\rho=|f|^2\) is a centered Gaussian density, namely,
\[
|f(x)|^2=\frac{1}{(2\pi \sigma^2)^{N/2}}
\exp\!\left(-\frac{|x|^2}{2\sigma^2}\right)
\]
for some \(\sigma>0\).

\begin{proof}[\textbf{Proof of Proposition \ref{prop:fraclog_moment}.}]
Let
\[
f:=(-\Delta)^{s/2}u.
\]
By Proposition~\ref{prop:fraclog-beckner}, we obtain
\begin{equation}\label{eq:beckner-fraclog-applied}
\frac{N}{4}\,\langle u,(-\Delta)^{s+\ln}u\rangle
\ge
\int_{\mathbb R^N}|f(x)|^2\ln|f(x)|\,dx
+
 B_N.
\end{equation}
By \eqref{eq:entropy-moment-f}, we obtain
\[
\frac{N}{4}\,\langle u,(-\Delta)^{s+\ln}u\rangle
\ge
-\frac{N}{4}\ln\!\Bigg(\frac{2\pi e}{N}\int_{\mathbb R^N}|x|^2\, \big|(-\Delta)^{s/2}u(x)\big|^2\,dx\Bigg)
+B_N.
\]

It remains to show that the inequality is strict. Equality in the above argument would require equality simultaneously in Proposition~\ref{prop:fraclog-beckner} and in \eqref{eq:entropy-moment-f}. By Proposition~\ref{prop:fraclog-beckner}, equality in the first inequality holds if and only if \(f\) is, up to conformal automorphisms, a bubble of the form
\[
f(x)=A(1+|x|^2)^{N/2}.
\]
On the other hand, equality in the sharp entropy--moment inequality \eqref{eq:entropy-moment-f} holds if and only if \(|f|^2\) is Gaussian, equivalently,
\[
f(x)=C e^{-\alpha |x|^2}
\]
up to the obvious normalization constants. These two families do not intersect nontrivially. Therefore the inequality is strict, and \eqref{eq:fraclog-moment-normalized} follows.
\end{proof}

%------------------------------------------------------------
% An L^q-norm lower bound for (-Δ)^{s+ln} via Jensen's inequality
%------------------------------------------------------------
\subsubsection{Third version using Jensen's inequality}
A different way to handle the entropy term
\[
\int_{\mathbb R^N}\big|(-\Delta)^{s/2}u(x)\big|^2
\ln\big|(-\Delta)^{s/2}u(x)\big|\,dx
\]
is to apply Jensen's inequality directly, rather than passing through the Shannon entropy--moment inequality. This provides another explicit estimate of the entropy functional and yields a third sharp fractional--logarithmic Sobolev-type inequality.

\begin{proof}[\textbf{Proof of Proposition \ref{prop:fraclog-Lq-bound}.}]
Set
\[
f:=(-\Delta)^{s/2}u.
\]
Then by Proposition~\ref{prop:fraclog-beckner}, we obtain
\begin{equation}\label{eq:beckner-fraclog-Lq}
\frac{N}{4}\,\langle u,(-\Delta)^{s+\ln}u\rangle
\ge
\int_{\mathbb R^N}|f(x)|^2\ln|f(x)|\,dx
+
 B_N.
\end{equation}

We now estimate the entropy term from below in terms of the \(L^q\)-norm of \(f\). Since \(\|f\|_2=1\), the measure
\[
d\mu(x):=|f(x)|^2\,dx
\]
is a probability measure on \(\mathbb R^N\). Note that
\[
\int_{\mathbb R^N}|f(x)|^2\ln|f(x)|\,dx
=
\frac{1}{q-2}\int_{\mathbb R^N}\ln\bigl(|f(x)|^{q-2}\bigr)\,d\mu(x),
\]
since \(\ln\) is concave, Jensen's inequality gives
\[
\int_{\mathbb R^N}\ln\bigl(|f|^{q-2}\bigr)\,d\mu
\le
\ln\!\left(\int_{\mathbb R^N}|f|^{q-2}\,d\mu\right).
\]
Because \(q-2<0\), multiplying by \(\frac{1}{q-2}\) reverses the inequality, and hence
\[
\int_{\mathbb R^N}|f(x)|^2\ln|f(x)|\,dx
\ge
\frac{1}{q-2}
\ln\!\left(\int_{\mathbb R^N}|f(x)|^{q-2}|f(x)|^2\,dx\right).
\]
That is,
\[
\int_{\mathbb R^N}|f(x)|^2\ln|f(x)|\,dx
\ge
\frac{1}{q-2}\ln\!\left(\int_{\mathbb R^N}|f(x)|^q\,dx\right).
\]
Substituting this bound into \eqref{eq:beckner-fraclog-Lq}, and recalling that \(f=(-\Delta)^{s/2}u\), we obtain
\[
\frac{n}{4}\,\langle u,(-\Delta)^{s+\ln}u\rangle
\ge
-\frac{1}{2-q}\ln\Big\|(-\Delta)^{s/2}u\Big\|_{L^q(\mathbb R^N)}^q
+B_N.
\]

It remains to show that the inequality is strict. Equality in the Jensen step would require $|f(x)|^{q-2}$
to be constant \(\mu\)-almost everywhere, hence \(|f|\) itself to be constant on the support of \(\mu\), that is, on the set where \(f\neq 0\). Since \(f\in \mathcal S(\mathbb R^N)\subset L^2(\mathbb R^N)\) and \(\|f\|_2=1\), this is impossible unless \(f=0\), which is excluded. Therefore equality cannot occur in the Jensen inequality, and the above estimate is in fact strict. This proves \eqref{eq:fraclog-Lq-normalized}.
\end{proof}

	% ================= Acknowledgements =================

		% ================= Bibliography =================
		\printbibliography

		% ================= Author's Information =================

	\vspace{2em}

\small
	\noindent\textit{Huyuan Chen}:  Center for Mathematics and Interdisciplinary Sciences,\\[1mm]
		Fudan University, Shanghai 200433, PR China.\\[2mm]
Shanghai Institute for Mathematics and Interdisciplinary Sciences,\\[1mm]
		 Shanghai 200433, PR China \\[1mm]  
         \noindent\emph{Email:} \texttt{chenhuyuan@simis.cn} \\[3mm]

	\noindent\textit{Rui Chen}: School of Mathematical Sciences, Fudan University,\\[1mm]
		Shanghai 200433,  China\\[2mm]
		Brandenburg University of Technology Cottbus--Senftenberg,\\[1mm]
		Cottbus 03046, Germany\\[1mm]
		\noindent\emph{Email:} \texttt{chenrui23@m.fudan.edu.cn}\\[3mm]

	\noindent\textit{Daniel Hauer }:
         Brandenburg University of Technology Cottbus–Senftenberg,\\[1mm]
		Platz der Deutschen Einheit 1, 03046 Cottbus, Germany\\[2mm]
		 School of Mathematics and Statistics, The University of Sydney,\\[1mm]
		NSW 2006, Australia \\[1mm]
      	\noindent\emph{Email:} \texttt{daniel.hauer@b-tu.de}

	\end{document}
    \newpage

		\vspace{10em}

$$ (-\Delta)^{s+\ln}v- \frac{2}{N-2s}\Big(\,(\ln v)\,(-\Delta)^sv
 +  (-\Delta)^s\bigl((\ln v)\,v\bigr)\Big)\quad {\rm  in}\ \R^N$$

% Assume 0<s<1 and v\in C_c^2(\R^N) with v>0 (so that \ln v is well-defined).

% Recall the pointwise integral representations
\[
(-\Delta)^s v(x)=c_{N,s}\,\int_{\R^N}\frac{v(x)-v(y)}{|x-y|^{N+2s}}\,dy,
\]
\[
(-\Delta)^{s+\ln}v(x)
=c_{N,s}\,\int_{\R^N}\frac{v(x)-v(y)}{|x-y|^{N+2s}}
\Bigl(b_{N,s}-2\ln|x-y|\Bigr)\,dy,
\qquad 
b_{N,s}:=\frac{d}{ds}c_{N,s}.
\]

% Start from
% \int vE(v)=<v,(-\Delta)^{s+\ln}v>-\frac{4}{N-2s}<(\ln v)v,(-\Delta)^s v>.
% Expanding each pairing:

\[
\int_{\R^N} v(x)\,E(v)(x)\,dx
=
c_{N,s}\int_{\R^N} v(x)\,
\int_{\R^N}\frac{v(x)-v(y)}{|x-y|^{N+2s}}
\Bigl(b_{N,s}-2\ln|x-y|\Bigr)\,dy\,dx
\]
\[
\hspace{30mm}
-\frac{4}{N-2s}\,c_{N,s}\int_{\R^N} (\ln v(x))\,v(x)\,
\int_{\R^N}\frac{v(x)-v(y)}{|x-y|^{N+2s}}\,dy\,dx.
\]

% If you prefer a symmetric (Gagliardo) double-integral expansion, use
% <\phi,(-\Delta)^s\psi>=\frac{c_{N,s}}2\iint \frac{(\phi(x)-\phi(y))(\psi(x)-\psi(y))}{|x-y|^{N+2s}} dxdy,
% and the analogous identity with the kernel (b_{N,s}-2\ln|x-y|) for (-\Delta)^{s+\ln}:

\[
\big\langle v,(-\Delta)^{s+\ln}v\big\rangle
=
\frac{c_{N,s}}{2}\iint_{\R^N\times\R^N}
\frac{\bigl(v(x)-v(y)\bigr)^2}{|x-y|^{N+2s}}
\Bigl(b_{N,s}-2\ln|x-y|\Bigr)\,dx\,dy,
\]
\[
\big\langle (\ln v)\,v,(-\Delta)^s v\big\rangle
=
\frac{c_{N,s}}{2}\iint_{\R^N\times\R^N}
\frac{\bigl(v(x)\ln v(x)-v(y)\ln v(y)\bigr)\bigl(v(x)-v(y)\bigr)}{|x-y|^{N+2s}}\,dx\,dy.
\]

% Hence
\[
\int_{\R^N} v\,E(v)\,dx
=
\frac{c_{N,s}}{2}\iint_{\R^N\times\R^N}
\frac{\bigl(v(x)-v(y)\bigr)^2}{|x-y|^{N+2s}}
\Bigl(b_{N,s}-2\ln|x-y|\Bigr)\,dx\,dy
\]
\[
\hspace{30mm}
-\frac{4}{N-2s}\cdot\frac{c_{N,s}}{2}\iint_{\R^N\times\R^N}
\frac{\bigl(v(x)\ln v(x)-v(y)\ln v(y)\bigr)\bigl(v(x)-v(y)\bigr)}{|x-y|^{N+2s}}\,dx\,dy.
\]
	
	\end{document}

\begin{rmk}
If the map
\[
  \lambda\mapsto\varphi^{s+\ln}_N(\lambda)
\]
is injective on the spectrum
\(
  \{\lambda_k=k(k+N-1):k\ge0\}
\)
of $-\Delta_g$. Then,  \eqref{eq:eig-lap} and \eqref{eq:eig-frac-log} are
equivalent. More precisely, if $\Phi\not\equiv0$ satisfies
\(
  \mathscr{P}^{s+\ln}_g\Phi = \mu\Phi
\),
then there exists a unique $k_0\ge0$ such that
\(
  \mu=\varphi^{s+\ln}_N(\lambda_{k_0})
\),
we have
\(
  \Phi\in\mathcal{H}_{k_0}
\),
and consequently
\(
  -\Delta_g\Phi=\lambda_{k_0}\Phi.
\)
\end{rmk}

\begin{proof}
Let $\Phi$ be as in the statement and expand it in spherical harmonics
as in the proof of Theorem~\ref{thm:frac-log-basic}(ii). The argument
there shows that $\Phi$ belongs to
\(
  \bigoplus_{k\in K}\mathcal{H}_k
\)
with
\(
  K=\{k:\varphi^{s+\ln}_N(\lambda_k)=\mu\}.
\)
By the injectivity assumption, the set $K$ contains at most one index.
If $\Phi\not\equiv0$, then $K$ is nonempty and there exists a unique
$k_0$ with $K=\{k_0\}$. Therefore $\Phi\in\mathcal{H}_{k_0}$ and
\(
  -\Delta_g\Phi=\lambda_{k_0}\Phi
\),
which yields the claimed equivalence between
\eqref{eq:eig-lap} and \eqref{eq:eig-frac-log}.
\end{proof}